\documentclass[11pt,reqno]{amsart}

%
\usepackage[dvips]{graphicx}
\usepackage{amssymb, latexsym, amsmath, amscd, array, amsthm, epsfig, pstricks}
\usepackage[all]{xy}
\usepackage{bbm}

\usepackage[pagebackref=true]{hyperref}

\usepackage{breakurl}   

\usepackage{mathrsfs}

\usepackage{fontenc}
\usepackage{amsmath,amscd}
\usepackage{latexsym}
\usepackage{enumerate, enumitem}

\textheight22.5cm \textwidth16cm \addtolength{\topmargin}{-20pt}
\evensidemargin0cm \oddsidemargin0cm

\flushbottom

\newcommand{\Z}{{\mathbb Z}}
\newcommand{\R}{{\mathbb R}}
\newcommand{\Q}{{\mathbb Q}}

\newcommand{\RP}{{\mathbb RP}}

\newcommand{\HP}{{\mathbb HP}}

\newcommand{\Isom}{{\rm Isom}}

\newcommand{\diam}{{\rm diam}}

\newcommand{\inj}{{\rm inj}}

\newcommand{\vol}{{\rm vol}}

\newcommand{\w}{{\rm W}}
\newcommand{\UW}{{\rm UW}}
\newcommand{\HS}{{\rm HS}}
\newcommand{\length}{{\rm length}}
\newcommand{\FR}{{\rm FillRad}}
\newcommand{\FillRad}{{\rm FillRad}}
\newcommand{\FH}{{\rm FH}}
\newcommand{\FHb}{\overline{\rm FH}}

\numberwithin{equation}{section}

\newtheorem{theorem}{Theorem}[section]
\newtheorem{proposition}[theorem]{Proposition}

\newtheorem{lemma}[theorem]{Lemma}
\newtheorem{claim}[theorem]{Claim}

\theoremstyle{definition}
\newtheorem{definition}[theorem]{Definition}
\newtheorem{example}[theorem]{Example}
\newtheorem{remark}[theorem]{Remark}

\long\def\forget#1\forgotten{} 

\begin{document}

\title{Sweepouts of closed Riemannian manifolds}

\author[A.~Nabutovsky]{Alexander Nabutovsky}
\author[R.~Rotman]{Regina Rotman}
\author[S.~Sabourau]{St\'ephane Sabourau}

\thanks{Partially supported by the ANR project Min-Max (ANR-19-CE40-0014).}

\address{Department of Mathematics, University of Toronto, 40 St. George Street, Toronto, ON M5S 2E4, Canada} 

\email{alex@math.utoronto.ca}

\address{Department of Mathematics, University of Toronto, 40 St. George Street, Toronto, ON M5S 2E4, Canada} 

\email{rina@math.utoronto.ca}

\address{\parbox{\linewidth}{LAMA, Univ Paris Est Creteil, Univ Gustave Eiffel, CNRS, F-94010, Cr\'eteil, France \\
CRM (UMI3457), CNRS, Montr\'eal, QC H3C 3J7, Canada}}

\email{stephane.sabourau@u-pec.fr}

\subjclass[2020]
{Primary 53C23; Secondary 53C20}

\begin{abstract}
We show that for every closed Riemannian manifold there exists a continuous family of $1$-cycles (defined as finite collections of disjoint closed curves) 
parametrized by a sphere and sweeping out the whole manifold  so that the lengths of all connected closed curves are bounded in terms of the volume (or the diameter) and the dimension~$n$ of the manifold, when $n \geq 3$. 
An alternative form of this result involves a modification of Gromov's definition of waist of sweepouts, where the space of parameters can be any finite polyhedron (and not necessarily a pseudomanifold). 
We demonstrate that the so-defined  polyhedral $1$-dimensional waist of a closed Riemannian manifold is equal to its filling radius up to at most a constant factor.
We also establish upper bounds for the  polyhedral $1$-waist of some homology classes in terms of the volume or the diameter of the ambient manifold.
In addition, we provide generalizations of these results for sweepouts by polyhedra of higher dimension using the homological filling functions.
Finally, we demonstrate that the filling radius and the hypersphericity of a closed Riemannian manifold can be  arbitrarily far apart.
\end{abstract}

\maketitle

\section{Introduction}

One can define a {\it $p$-slicing} of an $n$-dimensional manifold (or a pseudomanifold) $N$ 
as a collection of inverse images $h^{-1}(t)$ of points under a ``nice" mapping $h:N^n \to T^{n-p}$ to a lower dimensional (pseudo)manifold,
where $t$ runs over~$T^{n-p}$. Here, the ``niceness" of $h$ must imply that all $h^{-1}(t)$ are subpolyhedra of $N^n$ of dimension $\leq p$.
(Note, however, that even for a nice map~$h$, the fibers~$h^{-1}(t)$ need not automatically form a continuous family of subpolyhedra under
any reasonable choice of topology.) 
One can define a {\it $p$-sweepout} of $M$ as the image of a slicing of a (pseudo)manifold~$N$ under a topologically nontrivial continuous map~$\varphi:N \to M$. 
Here, one possible meaning of ``topologically nontrivial" is that the image under $\varphi$ of the fundamental homology class of~$N$ with $\mathbb{Z}$ or $\mathbb{Z}_2$ coefficients is the fundamental homology class of~$M$. Obviously, a slicing is a sweepout (corresponding to the identity map
of $M$), but not vice versa.
More generally, one can define a $p$-sweepout of a $k$-dimensional homology class $a$ of $M$ from a continuous map $\varphi:N \to M$ with $\varphi_*([N])=a$.
Taking the infimum of the $p$-volumes of the fibers $h^{-1}(t)$ over all maps $h:N \to T^{n-p}$ and $\varphi:N \to M$ such that $\varphi_*([N])=[M]$ (respectively, $\varphi_*([N])$ represents a prescribed lower dimensional class $a$ of $M$), one arrives to the definition of the {\it $p$-waist} of~$M$ (respectively, of a homology class~$a$ of~$M$).
This definition was introduced by Gromov; see~\cite[\S15]{Gromov101} and~\cite[\S6]{gro}. 

Among other things we analyze a version of this concept when the space $T^{n-p}$ is not required to be a pseudomanifold, but can be an arbitrary finite
polyhedron.
Consider a $p$-sweepout of $M$ (or, more generally, of a homology class $a$ of $M$ with coefficients in $G=\mathbb{Z}$ or $\mathbb{Z}_2$) whose fibers $h^{-1}(t)$ are $p$-dimensional cycles that continuously depend on $t$. We can regard $\varphi(h^{-1}(t))$ as 
$p$-cycles of~$M$. (If $h^{-1}(t)$ is empty, then the $1$-cycle is, by definition, zero.) 
Therefore, the maps $h:N \to T^{n-p}$ and $\varphi:N \to M$ induce a
continuous map from the pseudomanifold $T^{n-p}$ to the space $\mathcal{Z}_p(M;G)$ of $p$-cycles of $M$ with coefficients in $G$. 
 This map sends the fundamental homology class of $T^{n-p}$ to an $(n-p)$-dimensional homology class of $\mathcal{Z}_p(M;G)$. If this homology class is nontrivial,
we can take the minimax value of the $p$-volume over $p$-cycles, where the supremum is taken over the space of all cycles in a sweepout, and the infimum over all
sweepouts $h:N \to T^{n-p}$ and $\varphi:N \to M$ corresponding to the considered homology class of $\mathcal{Z}_p(M;G)$. The resulting quantities (called the {\it widths} of~$M$) were studied in many important papers. Properties of widths were crucial for a recent progress in geometric analysis including proofs of the Willmore conjecture by F.~Marques and A.~Neves, and the Yau conjecture by A.~Song. 
If $T^{n-p}=S^{n-p}$, then the construction gives rise to an $(n-p)$-dimensional homotopy class of $\mathcal{Z}_p(M;G)$. As $\pi_{n-p}(\mathcal{Z}_p(M;G))$ is
isomorphic to $H_n(M;G)$ (see~\cite{alm}), we obtain a corresponding homology class of~$M$. Looking at Almgren's proof in \cite{alm}, it is easy to see that
this is the same homology class that was used in the definition of the $p$-sweepout.

There are many different versions of $p$-waists in the literature that are referred to as width, waist, diastole, etc.
One can consider all sweepouts, or only slicings, consider different notions of ``topologically nontrivial", or impose different
restrictions on the spaces of parameters parameterizing sweepouts. 
Also, one can measure the size of the fibers $h^{-1}(t)$ not as their volume, but using a  different functional, such as the diameter. 
For example, if we consider only slicings (so $\varphi$ is the identity map) by arbitrary not necessarily polyhedral fibers~$h^{-1}(t)$ and measure their size using the diameter, we obtain the notion of {\it Urysohn width}. 
It was first proven by L.~Guth, that if the codimension $p$ equals~$1$, then the Urysohn width of a closed Riemannian manifold $M$ can be majorized in terms of only the volume and the dimension of $M$; see~\cite{guth17}, \cite{LLNR}, \cite{pap}, \cite{nab}, \cite{sab20} for this and related results.
In this paper, we will consider the case, when $T^{n-p}$ is a pseudomanifold as in Gromov's definition, but the functional of interest
is the maximal $p$-volume of a connected component of the fiber $h^{-1}(t)$ (instead of the $p$-volume of the whole  $h^{-1}(t))$. Also, we are going to
do this in the most interesting case, when $T^{n-p}=S^{n-p}$, and for maps~$h:N \to T^{n-p}$ such that $h^{-1}(t)$ is a continuous family of $p$-cycles.
Thus, the resulting notion could be regarded as a version of the widths of $M$ (for a different functional).

\medskip

There is a number of papers with interesting and important upper and lower bounds for various versions of waists/widths.
Here are some of them for the most studied situation when we take the min-max of the volume over all slicings of a Riemannian manifold (slicing waists).

\medskip

\noindent {\bf Upper bounds.} 
The first bound in this direction applies to closed Riemannian surfaces and involves only the area and the genus of the surface; see~\cite{BS10}.
In higher dimension, a similar bound in terms of the volume holds for closed Riemannian $n$-manifold~$M$ with nonnegative Ricci curvature, which leads to the existence of a closed minimal hypersurface (with a singular set of Hausdorff dimension at most~$n-8$) whose $(n-1)$-volume is bounded in terms of the volume of~$M$;
see~\cite{GL17}, \cite{sab17}.
Further estimates for closed Riemannian manifolds with Ricci curvature bounded below can be found in~\cite{GL17}.
For closed $3$-manifolds~$M^3$ with positive Ricci curvature, the bound in terms of the volume holds for the min-max length of the fibers of the maps from~$M$ to the plane; ; see~\cite{LZ18}.
Without any curvature condition, these results fail; see~\cite{PS} and~\cite{pap} for counterexamples.

\medskip

\noindent {\bf Lower bounds.} 
A lower bound for slicing waists of the form ${\rm const}_n \, \FillRad(M^n)$ can be found in Appendix~1 of~\cite[\S6]{gro}, where $\FillRad$ is the filling radius; see Definition~\ref{def:FR}.
In a different direction the exact lower bound for the volume of fibers of maps between round spheres of different dimensions is the content of Gromov's famous waist theorem; see~\cite{gromovwaist}. 
There are many generalizations for other spaces; see~\cite{klartag} for a highly nontrivial generalization of Gromov's waist theorem for cubes and \cite{ahk} for a survey on the subject.

\forget
Sweepout estimates are min-max volume estimates over families of cycles satisfying nontrivial topological conditions.
These min-max values are referred to as width, waist, diastole, slicing, etc. in the literature.
They generalize systolic inequalities and are closely related to minimal surface theory and the geometric calculus of variations.
Their study has recently been the object of numerous works and led to major advances in geometry, including the resolutions of the Willmore conjecture and the Yau conjectures. 

\medskip

There are several ways to define the width of a closed manifold~$M$ depending on the notion of sweepout considered and the way their size is measured.
For instance, one could define the width by minimizing the maximal volume of the fibers over all Morse functions on~$M$.
With this definition, the area of every closed surface with a Riemannian metric is bounded from below by the square of the width, up to a metric-independent multiplicative constant; see~\cite{BS10}.
In higher dimension, a similar universal volume lower bound holds for closed Riemannian $n$-manifolds~$M$ with nonnegative Ricci curvature, which leads to the existence of a closed minimal hypersurface (with a singular set of Hausdorff dimension at most~$n-8$) of volume at most $c_n \, \vol(M)^{\frac{n-1}{n}}$, for some constant $c_n$ depending only on~$n$; see~\cite{GL17}, \cite{sab17}.
For closed $3$-manifolds~$M$ with positive Ricci curvature, a similar universal volume lower bound also holds with the min-max length of the fibers of the maps $M \to \R^2$; see~\cite{LZ18}.
Without any curvature condition, these results fail; see~\cite{PS} and~\cite{pap} for counterexamples.

\medskip

Another way to define the width would be to minimize the maximal diameter of the fibers over all continuous functions from~$M$ to a simplicial $(n-p)$-complex.
This notion of width, also known as Urysohn width, provides a universal lower bound on the volume of every closed Riemannian $n$-manifold for $p=1$; see~\cite{guth17}, \cite{LLNR}, \cite{pap}, \cite{nab}.

\medskip

In these definitions, the sweepouts consist of fibers of maps and therefore are pairwise disjoint.
For a more general notion of sweepout allowing cycles to intersect each other, one can consider maps $X \to {Z}_p(M)$ from a finite simplicial complex~$X$ to the $p$-cycle space~${Z}_p(M)$ of~$M$ studied by Almgren in~\cite{alm}.
Combined with results from geometric measure theory, this notion is well designed for applications and is at the cornerstone of major results in minimal surface theory.
One issue with this definition though is that it does not include the classical sweepout of the ``three-legged starfish'' sphere composed of single loops and parameterized by the tripod tree (as it is not continuous on the one-cycle space endowed with any natural topology).
\forgotten


\subsection{Homology $1$-waist bounds}

In this paper, we consider two different notions of sweepout and waist similar but somewhat different to those introduced by Gromov in~\cite[\S6]{gro}; see Definitions~\ref{def:width} and~\ref{def:width2}. 
With these notions, universal upper bounds on the waist of one-parameter families of one-cycles sweeping out essential surfaces of closed Riemannian manifolds were obtained in~\cite{sab}.
Our first theorem extends this result to the waist of multi-parameters families of one-cycles sweeping out any closed Riemannian manifold.


\medskip

Before stating the precise result, we need to introduce the following notion of $p$-waist.

\begin{definition} \label{def:width} Let $M$ be a closed $n$-manifold. 
A polyhedral \emph{$p$-sweepout} of~$M$ is a family 
\[
\varphi[h^{-1}(t)] \subseteq M
\]
with $t \in T$, where $h:N \to T$ is a continuous map from a closed $n$-pseudomanifold~$N$ to a finite $(n-p)$-dimensional polyhedron~$T$ such that
all fibers $h^{-1}(t)$ are $p$-subpolyhedra of $N$, and $\varphi:N \to M$ is a continuous degree one map.
That is,
\[
\varphi_*([N]) = [M] \in H_n(M)
\]
where the homology coefficients are in~$\Z$ if $M$ is orientable, and in~$\Z_2$ otherwise.
Define the \emph{homology $p$-waist} of a closed Riemannian manifold~$M$ as
\begin{equation} \label{eq:Wp}
\w_p(M) = \inf_{\raisebox{-3pt}{\scriptsize $\varphi,h$}} \, \sup_{t \in T} \,  \vol_p(\varphi_{|h^{-1}(t)})
\end{equation}
where the infimum is taken over all $n$-pseudomanifolds~$N$, all simplicial $(n-p)$-complexes~$T$, and all maps $\varphi:N \to M$ and $h:N \to T$ defining a polyhedral $p$-sweepout of~$M$.
Here, the notation $\vol_p(\varphi_{|h^{-1}(t)})$ stands for the volume of the map~$\varphi$ restricted to the fiber~$h^{-1}(t) \subseteq N$, and not merely the volume of its image (which might be smaller).
If such sweepouts do not exist, we let \mbox{$\w_p(M)=0$}.
When $p=1$, we simply write $\w(M)=\w_1(M)$.
\end{definition}

The only difference between this definition and Gromov's is that we do not require that $T$ is a pseudomanifold. This distinction can be illustrated by the following example. 

\begin{example} \label{ex:fish1}
Let $M$ be a Riemannian $2$-sphere that looks like a ``thin"
$2$-dimensional three-legged starfish. We can choose~$T$ as a tripod, that is, the union of three closed intervals intersecting
at a common endpoint, and polyhedral $1$-sweepout of $M$ by 
$1$-cycles, most of which are very short closed curves running around individual tentacles.
Only one of them, namely, the inverse image of the center of the tripod looks like the $\theta$-graph with two vertices and three edges connecting these vertices. (Each pair of
these three edges forms a closed curve around one of the tentacles that appears as the limit
of closed curve above inner points of the corresponding leg of tripod~$T$.) Note that this polyhedral sweepout would not be allowed in Gromov's definition. Also, note that if one would consider the inverse images of a point of~$T$ as a function
from~$T$ to the space of currents on the $2$-sphere,
this function will {\it not} be continuous. Indeed,
the inverse image of the center of the tripod will be the $\theta$-graph that consists of thee arcs. When we approach the center along each ray, the inverse images of points will consist of two arcs,
and will converge to a subset of the $\theta$-graph that consists of two (out of three) arcs. Note, that this type of discontinuity
would be impossible if $h$ were a map to a manifold ({\it e.g.}, a sphere) such that for each $t$, the fiber $h^{-1}(t)$ is a cycle.
\end{example}

With this notion of waist, we can prove the following homology $1$-sweepout estimates.

\begin{theorem} \label{theo:A}
Let $M$ be a closed $n$-manifold.
Then every Riemannian metric on~$M$ satisfies 
\begin{align*}
\FillRad (M) & \geq c_n\, \w(M) \\
\vol(M) & \geq c'_n \, \w(M)^n \\
\diam(M) & \geq c''_n \, \w(M)
\end{align*}
for some explicit positive constants~$c_n$, $c'_n$ and~$c_n''$ depending only on~$n$.
\end{theorem}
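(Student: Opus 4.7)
The plan is to prove the principal estimate $\w(M) \leq C_n \FillRad(M)$ first, and then deduce the volume and diameter bounds from the classical Gromov inequalities $\FillRad(M) \leq C_n \vol(M)^{1/n}$ and $\FillRad(M) \leq \frac{1}{3} \diam(M)$ recalled in Appendix~1 of~\cite[\S6]{gro}. These immediately yield $\vol(M) \geq c_n' \w(M)^n$ and $\diam(M) \geq c_n'' \w(M)$ from the filling radius bound.

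To construct a polyhedral $1$-sweepout of $M$ with controlled fiber length, I would fix $R$ slightly larger than $\FillRad(M)$ and invoke the definition of the filling radius to obtain a compact $(n+1)$-pseudomanifold $W$ with $\partial W = M$ together with a continuous map $\Psi: W \to L^\infty(M)$ whose image lies in the $R$-neighborhood $U_R(M)$ and which restricts to the Kuratowski embedding on $\partial W$. I would then take a very fine triangulation of $W$ with mesh $\eta \ll R$ and consider its dual cell decomposition. From this data one extracts a closed $n$-pseudomanifold $N$ (obtained as the boundary of a cellular regular neighborhood of $M = \partial W$ in $W$), a degree one map $\varphi: N \to M$ built from vertex-wise nearest-point assignments into $M$ (possible thanks to the $R$-density of $\Psi$-images of vertices near $M$), and a collapsing map $h: N \to T$ onto the dual $(n-1)$-skeleton $T$ of the triangulation. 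Each fiber $h^{-1}(t)$ is then a $1$-subpolyhedron of $N$ whose image under $\varphi$ consists of at most $O_n(1)$ geodesic segments in $M$ of length $O(R)$, yielding the required bound $\sup_t \vol_1(\varphi_{|h^{-1}(t)}) \leq C_n R$.

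The main obstacle lies in the concrete construction of the pair $(N, \varphi)$: since $\Psi$ takes values in $L^\infty(M)$ rather than in $M$ itself, there is no retraction of $W$ onto $M$, and one must extend a vertex-wise choice of nearest points to a continuous simplicial map while preserving the polyhedral structure of fibers and verifying the degree-one condition $\varphi_*([N]) = [M]$. The flexibility built into Definition~\ref{def:width}, which permits $T$ to be an arbitrary polyhedron rather than a pseudomanifold, is essential here: the tripod-type branching of $T$ illustrated in Example~\ref{ex:fish1} is precisely the structure that naturally arises from dual cell skeletons of fine triangulations of the filling, and it is what keeps the fibers one-dimensional while making $h$ continuous at the problematic vertex points. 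Careful bookkeeping of the multiplicities in the nearest-point assignment is needed to ensure that $\varphi_*[N]$ is exactly $[M]$ rather than a multiple or trivial class.
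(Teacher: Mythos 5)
Your overall strategy --- reduce everything to $\w(M)\le C_n\,\FillRad(M)$ by triangulating finely a filling of $M$ inside $U_R(M)\subseteq L^\infty(M)$, building $\varphi$ from nearest-point choices at vertices and geodesic edges, and then quoting the Gromov and Katz inequalities for the volume and diameter bounds --- is the same as the paper's. But the central construction is not the one you describe, and as stated it fails. You take $N$ to be the boundary of a regular neighborhood of $M=\partial W$ in $W$; up to the usual pseudomanifold caveats this is just a pushed-in copy of $M$, and there is no natural ``collapsing map'' from such a copy onto the dual $(n-1)$-skeleton of the triangulation of $W$ with $1$-dimensional fibers of bounded length. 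Producing one would amount to slicing $M$ itself by short $1$-complexes, which is stronger than Theorem~\ref{theo:A} and is precisely what the filling gives no direct handle on: if you try to assemble such a map from the natural sweepouts of the individual cells of $\partial W$ (Section~\ref{sec:simplex}), the glued map acquires a fiber containing the entire $1$-skeleton of the triangulation, and $\vol_1(\varphi_{|h^{-1}(t)})$ in Definition~\ref{def:width} is the length of the whole fiber, so it is not bounded in terms of $R$. The object that actually works lives in the interior of the filling: $N$ is assembled from the interface pieces $Y^n$ between the neighborhood of the $1$-skeleton $P^1$ and the neighborhood of the dual complex inside each top-dimensional cell of the filling $P$, capped off along $\partial P$ by the pieces $X_2^n$ of the boundary cells, and $h$ is assembled from $\theta:Y^n\to Z^{n-1}$ in the interior together with the cone maps $\Theta:X_2^n\to Z^{n-2}\times[0,\tfrac12]$ on the boundary cells. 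It is this $(n+1)$-dimensional interface structure that forces every fiber to be the $1$-skeleton of a single cube of dimension at most $n+1$, hence at most $(n+1)2^n$ edges each of length $<\delta=2\nu+\varepsilon$; it also requires subdividing $P$ so finely that the natural sweepout cycles of the boundary cells have short $\sigma$-images, a point absent from your sketch.

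The second gap is the degree-one claim. You flag ``careful bookkeeping of multiplicities in the nearest-point assignment,'' but no amount of bookkeeping at vertices yields $\varphi_*([N])=[M]$; the paper's mechanism is homological. The map $\bar f=f\circ r$ is defined on all of $Q\cup\partial P$ (on the neighborhood $Q$ of $P^1$ it factors through the $1$-skeleton, and on $\partial P$ it agrees with a deformation $\bar\sigma$ of $\sigma$), the pseudomanifold $N$ is homologous to $\partial P$ inside $Q\cup\partial P$ because their difference bounds $Q$, and therefore $\bar f_*([N])=\bar\sigma_*([\partial P])=[M]$. The paper then closes the argument by contradiction: if $\FillRad(M)$ were smaller than $c_n\,\w(M)$, all fibers would have image shorter than $\w(M)$, so $\bar f_{|N}$ could not be of degree one, contradicting the homology computation. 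Without some version of this ``homologous through the neighborhood of the $1$-skeleton'' argument your degree-one step has no proof; and with your choice of $N$ (essentially a copy of $M$, where degree one is easy) you have instead traded the difficulty for the construction of $h$, which is exactly the part your proposal does not supply.
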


Here, $\FillRad(M)$ is the filling radius introduced by Gromov in~\cite{gro83}; see Definition~\ref{def:FR} below. Gromov proved that $\FillRad(M^n)\leq const(n) \, \vol(M)^{\frac{1}{n}}$.
Later, it was established in~\cite{nab} that one can take $const(n)=n$. Also, M.~Katz proved that $\FillRad (M)\leq \frac{1}{3} \diam(M)$; see~\cite{katz83}.
Thus, the last two inequalities follow from the first one.
This result is geometrically appealing, and can be used to demonstrate that $\FillRad(M)$ is equal to $\w(M)$ up to at most a constant factor (see Theorem~\ref{theo:C}), thus
leading to some geometric intuition about $\FillRad(M)$. 

\medskip

The classical approach to obtain lower bounds on the filling radius is to argue by contradiction and construct a retraction, one simplex at a time, from a pseudomanifold~$P$ bounding~$M$ onto its boundary. 
However, such a construction is not always possible in general.
A different path was taken in~\cite{sab}, where a retraction from a different filling~$Q$ was constructed by considering all the simplices lying in the $2$-skeleton of~$Q$ at the same time (and not only one at a time) and by proceeding by induction on the dimension of the higher-dimensional skeleta of~$Q$ from there, using a topological assumption on the manifold.
In the proof of Theorem~\ref{theo:A}, where the manifold~$M$ is arbitrary, we take yet a different approach.
In particular, our construction does not proceed by induction on the skeleta of the filling.
Instead, we construct a pseudomanifold~$N$ homologuous to~$M$ and a map $N \to M$ non-homologuous to the identity map by considering all simplices of the filling at the same time without arguing by induction in order to derive a contradiction.

\medskip

It would be interesting to know whether the one-cycle sweepout estimates of Theorem~\ref{theo:A} hold for sweepouts made of pairwise disjoint one-cycles, that is, when the maps $\varphi:N \to M$ in Definition~\ref{def:width} are required to be diffeomorphisms.
In the case of surfaces, this would yield a positive answer to Bers' pants decomposition problem, which may or may not be true.

\subsection{Sweepouts and geometric measure theory} \label{subsec:gmt}

From the point of view of geometric measure theory, one would prefer a situation where
\begin{itemize}
    \item all inverse images $h^{-1}(t)$ are $1$-cycles on~$N$ (or, more precisely, finite collections of piecewise smooth closed curves on~$N$ so that their images~$\varphi(h^{-1}(t))$ are $1$-cycles on~$M$);
    \item $T=S^{n-1}$;
    \item the map $S^{n-1}\to \mathcal{Z}_1(M;G)$ with $G=\mathbb{Z}$ if $M$ is orientable, and $G=\mathbb{Z}_2$ otherwise, that sends every $t\in T$ to
$\varphi(h^{-1}(t))$ is continuous with respect to the flat topology on $\mathcal{Z}_1(M;G)$.
\end{itemize}

%


To achieve this goal in the case when $h^{-1}(t)$ is already a $1$-cycle for each~$t$, 
we can first replace the map $h:N \to T$ with a continuous map $\bar{h}:N \to S^{n-p}$ obtained as the composition of $h:N \to T$ with a finite-to-one continuous map $T \to S^{n-p}$.
In this case, the $p$-sweepouts are parameterized by~$S^{n-p}$ and the $p$-waist of~$M$ can be defined by minimizing the maximal volume of the map~$\varphi$ restricted to the 
connected components of the fibers of~$\bar{h}$. (Note that we cannot hope to have a control over the cardinality of the inverse images of the many-to-one map to the sphere. Therefore, the best we can hope for is to control the volume (length) of the individual connected components.)
That is,
\begin{equation} \label{eq:wp}
\bar{\w}_p(M) = \inf_{\raisebox{-3pt}{\scriptsize $\varphi,\bar{h}$}} \, \sup_{t \in S^{n-p}} \,  \max_{\raisebox{-3pt}{\scriptsize $C \subseteq \bar{h}^{-1}(t)$}} \, \vol_p(\varphi_{|C})
\end{equation}
where the infimum is taken over all maps $\varphi:N \to M$ and $\bar{h}: N \to S^{n-p}$ as above and the maximum is taken over all connected components~$C$ of~$\bar{h}^{-1}(t)$. 

\medskip

Note that, vice versa, given a PL-map $\bar{h}:N \to S^{n-p}$, one can define the space $T^{n-p}$ of connected components
of all inverse images $\bar{h}^{-1}(t)$ for every $t\in S^{n-p}$. 
This gives rise to a map $h:N \to T^{n-p}$ sending every point $x\in N$ to its connected component in $\bar{h}^{-1}(\bar h(x))$.
There exists a map $\psi:T^{n-p}\to S^{n-p}$ that sends each point of $t \in T^{n-p}$ to the corresponding value of~$t$ under~$h$ such that $\bar{h}=\psi \circ h$. 
Obviously, the fibers of~$h$ in~$N$ are connected and coincide with the connected components of the fibers of~$\bar{h}$. As we will see below, the map 
that sends each point $t\in T$ to the $1$-cycle $\varphi(\bar h^{-1}(t))$ need not be continuous, so one will need some extra care in constructing the finite-to-one map $\psi:T^{n-p} \to S^{n-p}$ to ensure the continuity of this map.

\medskip


As pointed out before, the first step is to alter $h:N \to T^{n-1}$ so that all $h^{-1}(t)$ become $1$-cycles.
To illustrate our approach consider the following example

\begin{example} \label{ex:fish2}
Consider the three-legged starfish $2$-sphere $M=N$ mapped to the tripod $T$ as described in Example~\ref{ex:fish1}. The inverse
image of the centre $c$ of the tripod is a collection of three arcs connecting two points on the sphere (a $\theta$-graph), which is not a cycle. The inverse images of the points
on each ray of the tripod are closed curves hugging the legs of the sphere. As a point on a ray of the tripod approaches the center, its inverse image approaches the union
of two of the three arcs forming the $\theta$-graph. 
Replace the three arcs in the inverse image of $c$ by pairs of arcs with the same endpoints as the original arc, running very close to the original arc. Each pair forms a digon bounding a thin disk. The boundary of each disk can be contracted to a point inside the disk via concentric simple loops.
This contraction corresponds to a map of each thin disk to a small interval such that the curves during the contracting homotopy are inverse images
of the points of the small interval. Combining these three homotopies, we obtain a map of three thin disks to a small tripod. The inverse image of the center of this small tripod is a collection of six arcs. Now, note that these six arcs can be grouped into three pairs of arcs so that each pair forms a simple closed curve ``hugging" one of three
long legs of the three-legged star-fish. Each of these three simple closed curves can be also contracted to a point via concentric simple loops 
along the corresponding long leg of the three-legged starfish. Combining these three contracting homotopies, we obtain
a map of the $2$-sphere minus the three thin disks to another tripod. (This map will be very close to the original map of the whole $2$-sphere to the tripod).
The inverse image of the centre of the tripod under this new map is the same collection of six arcs (or, three petals). 
We can glue these two tripods into one hexapod by identifying their centers and define a map from the $2$-sphere to the hexapod by combining the two maps defined on the union of the three thin disks and its complement to the tripods forming the hexapod.
The maximal length of a fiber is (almost) twice the maximal length of a fiber in the original map to
a tripod and every fiber now is a $1$-cycle. (Observe that an elaboration of this idea can be used to enhance Theorem~\ref{theo:A} by demanding
that all fibers $h^{-1}(t)$ are $1$-cycles, if desired. This will follow from an argument used to prove our next theorem below.)

\medskip

\begin{figure}[!htbp]
\centering
\includegraphics[width=9cm]{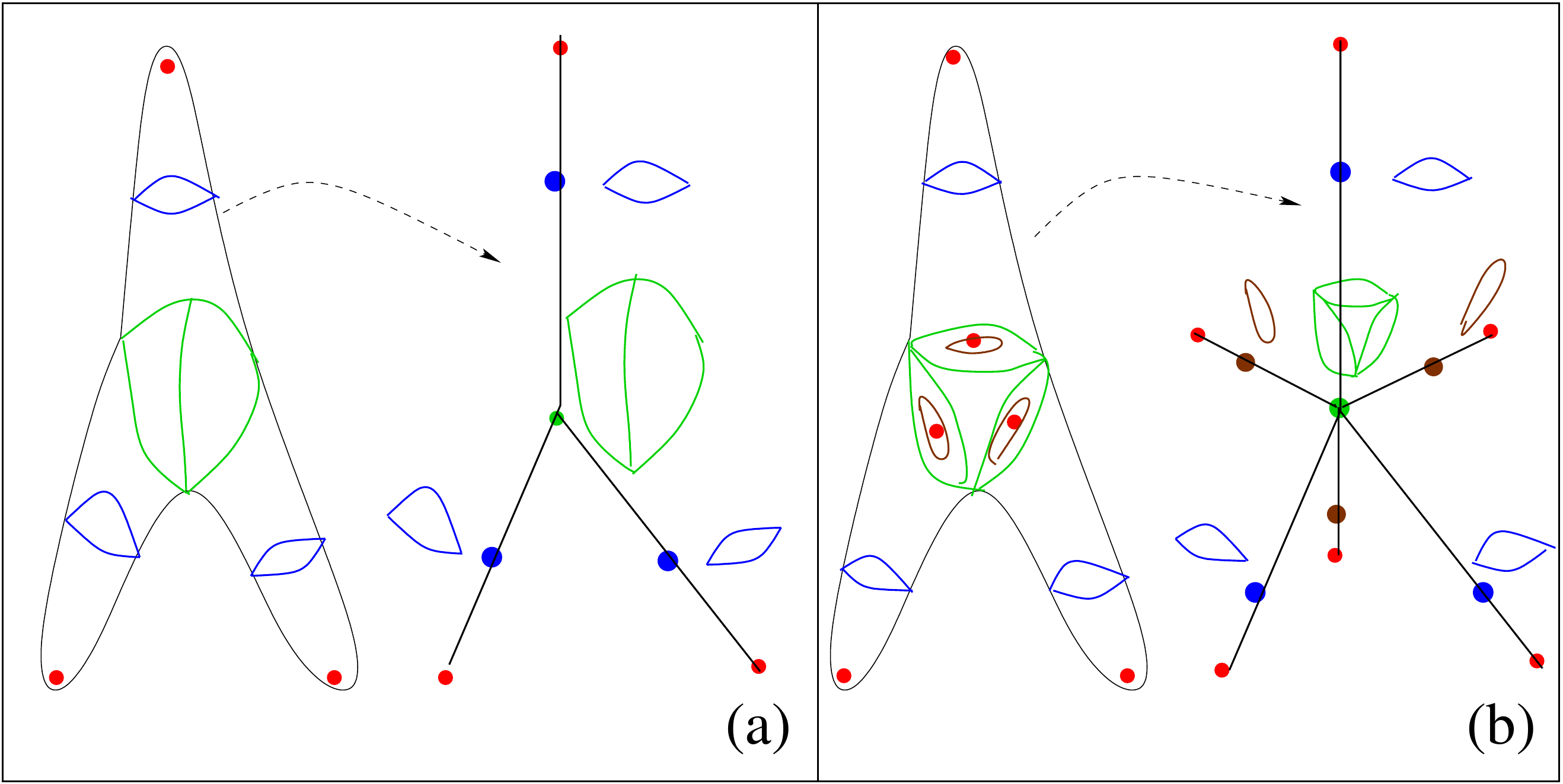}
\caption{Sweeping out a three-legged sphere}
\label{fig:sm3}
\end{figure}

In this construction, the inverse images of the points of the hexapod do not form a continuous family of $1$-cycles. We observe exactly the same discontinuity near the
center of the hexapod as the discontinuity near the center of the tripod for the original map. Of course, this discontinuity disappears once we take
a composition of the map to the hexapod $H$ with  an appropriate continuous finite-to-one map $H\to S^1$.
Here is a description of such a finite-to-one map that will be used below.
First, consider the following map from the hexapod to the interval $[0,1]$ where the center of the hexapod
is mapped to $\frac{1}{2}$, each ray of the first tripod is (linearly) mapped to $[\frac{1}{2},1]$, and each ray
of the second (small) tripod is mapped to $[0,\frac{1}{2}]$. Now, the inverse images of both endpoints of~$[0,1]$ are finite collections of points, while the inverse
image of each point of $(0,1)\setminus \{\frac{1}{2}\}$ is a collection of three simple loops. It is easy to see that the inverse images form
a continuous family of $1$-cycles (for the flat topology). Regarding these $1$-cycles as points in~$\mathcal{Z}_1(M;G)$, we see that both endpoints of $[0,1]$ are mapped
to the zero cycle, and our map can be factorized through $S^1=[0,1]/\{0,1\}$. One can easily check the continuity of the corresponding map of $S^1$ to the space of
$1$-cycles on the three-legged starfish.

Now, we are going to give an example illustrating that not every finite-to one map from the hexapod to~$S^1$ yields a continuous map to
the space of $1$-cycles. Consider a map that sends all vertices of degree~$1$ of the hexapod~$H$ to a point $a\in S^1$, the degree~$6$ vertex of~$H$ to another point
$b\in S^1$, and every edge of~$H$ to the same arc of~$S^1$ connecting $a$ and~$b$. The points of~$S^1$ not in the image of the map $H\to S^1$
 correspond to the zero $1$-cycle, and so is~$a$. However, the image of~$b$ is the $1$-cycle formed of six arcs on the three-legged star-fish.
 Hence, a discontinuity at~$b$.
 \end{example}


An elaboration of the ideas involved in Example~\ref{ex:fish2} can be combined with our construction in the proof of Theorem~\ref{theo:A'} to turn the family~$h^{-1}(t)$ of $1$-cycles, into a continuous family of $1$-cycles parameterized by~$S^{n-1}$ in the general case.
This leads us to introduce the following definition. (We refer to~\cite{morgan} for a general background in geometric measure theory, including the notions of currents and varifolds.)

\begin{definition} \label{def:Wtilde}
Let $M$ be a closed Riemannian $n$-manifold.
Define
\begin{equation} \label{eq:defWtilde}
\w'_1(M) = \inf_{\raisebox{-3pt}{\scriptsize $\Xi$}} \, \sup_{u \in S^{n-1}} \, \max_{\raisebox{-3pt}{\scriptsize $C \subseteq \Xi_u$}} \length(\varphi_{|C}).
\end{equation}
In this expression, the infimum is taken over the families~$\Xi$ of $1$-cycles (more precisely, of finite collections of closed curves) parameterized by~$S^{n-1}$ on a closed $n$-pseudomanifold~$N$, which are continuous both in the flat topology of the $1$-cycle space and in the weak topology of the $1$-varifold space, and whose image under a degree-one map $\varphi:N \to M$ represents the fundamental class~$[M]$ via the Almgren isomorphism $\pi_{n-1}(\mathcal{Z}_1(N;G)) \simeq H_n(M;G)$, see~\cite{alm}.
Furthermore, the maximum is taken over the connected components~$C$ of~$\Xi_u$, where $u \in S^{n-1}$.

Note that since $\Xi$ is continuous with respect to the weak topology of varifolds, the length of the image of~$\Xi_u$ varies continuously.
Clearly, $\w'_1(M)\geq \w_1(M)$.

\medskip

There is a non-equivalent but equally adequate and more geometric way to define~$\Xi$.
Consider the space~$\Gamma$ of piecewise smooth paths on $M$ endowed with the Lipschitz distance topology. 
A finite collection of piecewise smooth closed curves can be parameterized by $k$-tuples of paths of~$\Gamma$ whose endpoints match to form a $1$-cycle.
The distance between two such collections of closed curves can be defined as the infimum of the Lipschitz distance between their parametrizations as $k$-tuples of paths of~$\Gamma$.
(Here, the integer~$k$ must be the same for both collections of curves and we take the infimum over all~$k$.)
Formally, we also identify two collections of curves that differ by a union of closed curves reduced to finitely many points.
We can modify the definition of~$\w'_1(M)$ by taking the infimum over all families~$\Xi$ of finite collections of piecewise smooth closed curves which represent the fundamental class of~$M$ under the Almgren isomorphism.
\end{definition}


Our estimates (and their proofs) are valid for both choices of~$\Xi$ in the definition of~$\w'_1(M)$.

\medskip



The following theorem extends the estimates of Theorem~\ref{theo:A}.

\begin{theorem} \label{theo:A'}
Let $M$ be a closed $n$-manifold.
Then every Riemannian metric on~$M$ satisfies 
\begin{align*}
\vol(M) & \geq c_n \, \w'_1(M)^n \\
\diam(M) & \geq c'_n \, \w'_1(M)
\end{align*}
for some explicit positive constants~$c_n$ and $c'_n$ depending only on~$n$.
\end{theorem}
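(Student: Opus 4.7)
I would deduce Theorem~\ref{theo:A'} by upgrading the polyhedral sweepout produced in the proof of Theorem~\ref{theo:A} into a continuous family of $1$-cycles parametrized by~$S^{n-1}$ whose maximum connected-component length is comparable to the maximum total fiber length of the original sweepout. Since Theorem~\ref{theo:A} already yields a polyhedral sweepout of total fiber length $O_n(\vol(M)^{1/n})$ and $O_n(\diam(M))$, this upgrade immediately gives the two inequalities of Theorem~\ref{theo:A'}.

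\textbf{Step 1: turn all fibers into $1$-cycles on $N$.} The fibers $h^{-1}(t)$ of the polyhedral sweepout $\varphi:N\to M$, $h:N\to T$ of Theorem~\ref{theo:A} are generically $1$-manifolds but may acquire free endpoints over lower-dimensional strata of~$T$, exactly as in Example~\ref{ex:fish2} where the fiber over the center of the tripod is a theta-graph. I would apply the \emph{digon doubling} operation of that example to every such offending arc: replace it by a pair of nearby parallel arcs forming a thin digon and contract the digon via a sweep of concentric loops. This surgery replaces~$T$ by an enlarged polyhedron~$T'$ and~$N$ by a modified pseudomanifold~$N'$ carrying a degree-one map $\varphi':N'\to M$ and a map $h':N'\to T'$ all of whose fibers are now $1$-cycles of~$N'$, while at most doubling the maximum total fiber length.

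\textbf{Step 2: reparametrize by $S^{n-1}$ with flat and varifold continuity.} Next, I would construct a finite-to-one PL map $\psi:T'\to S^{n-1}$ and set $\bar h = \psi\circ h':N'\to S^{n-1}$. The key constraint is that $u\mapsto \varphi'(\bar h^{-1}(u))$ must be continuous both in the flat topology on $\mathcal{Z}_1(M;G)$ and in the weak varifold topology; otherwise new components could appear with a jump in total length. As the end of Example~\ref{ex:fish2} shows, generic finite-to-one maps fail this requirement at precisely the strata where components are ``born.'' The remedy, illustrated there by the factorization $H \to [0,1] \to [0,1]/\{0,1\} = S^1$, is to route $\psi$ so that every such stratum maps to a parameter value at which the corresponding cycle has zero varifold mass, i.e., all the relevant digon contractions have already been performed. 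In higher dimensions this is arranged inductively on the skeleta of~$T'$ using a suitable height function, capping off by identifying the top-dimensional mass-zero levels into~$S^{n-1}$.

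\textbf{Step 3: conclude.} The resulting family $\Xi_u = \bar h^{-1}(u)$ then qualifies under Definition~\ref{def:Wtilde}: each connected component of $\Xi_u$ is a subcurve of the cycle $h'^{-1}(\psi(u))$ and hence has $\varphi'$-length at most the total fiber length $\le 2(\w(M)+\epsilon)$; the degree-one condition together with $\varphi'_*[N']=[M]$ and the Almgren isomorphism $\pi_{n-1}(\mathcal{Z}_1(M;G))\simeq H_n(M;G)$ show $[\Xi]\mapsto[M]$. The \textbf{main obstacle} is the varifold-continuity requirement in Step~2: it forbids any jump in component lengths, a strong condition on how $\psi$ collapses the strata of~$T'$. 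Arranging such a $\psi$ uniformly across all dimensions---not merely the planar case of Example~\ref{ex:fish2}---while maintaining the polyhedral structure and the length bound is where the bulk of the technical work lies.
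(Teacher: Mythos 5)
Your Step~2 is not a proof but a restatement of the difficulty, and the remedy you sketch does not work in the generality you need. You start from an \emph{arbitrary} near-optimal polyhedral sweepout realizing $\w(M)$, with arbitrary parameter polyhedron~$T$ and arbitrary pseudomanifold~$N$. Whatever finite-to-one map $\psi:T'\to S^{n-1}$ you choose, at the strata where several sheets of~$\psi$ collide the varifold-continuous extension of the family $u\mapsto \bar h^{-1}(u)$ is forced to carry the limiting fiber with multiplicity equal to the number of colliding sheets (flat-norm cancellation of oppositely oriented copies does nothing to the varifold mass, which is exactly why $\w'_1$ is defined with both topologies). For an arbitrary sweepout this local sheet number is governed by the local combinatorics of~$T$ and is not bounded in terms of~$n$, so the component masses of the extended family cannot be bounded by $C_n(\w(M)+\epsilon)$; nor can you ``route $\psi$ so that the collision strata land at zero-mass parameters,'' since the fibers over those strata are dictated by the given sweepout and need not be small or null. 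In addition, the nontriviality of the resulting class under the Almgren isomorphism is asserted rather than proved (it does not follow formally from $\varphi'$ having degree one), and the digon-doubling of Step~1 is only described for arcs, not for general graph fibers.

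The paper resolves precisely this multiplicity problem by taking a different route: it does not upgrade an optimal sweepout for $\w(M)$ at all, but reproves the filling-radius bound from scratch as $\FR(M)\ge c_n\,\w'_1(M)$ (Theorem~\ref{theo:FR1bis}), and then concludes via Theorem~\ref{theo:FR}. The missing quantitative input is supplied by choosing the filling $P\subset L^\infty(M)$ with $\kappa_{n+1}$-bounded local complexity (Proposition~\ref{prop:complexity}, resting on Gromov's bounded-complexity filling result, Lemma~\ref{lem:even}); the explicit cubical sweepout of Section~\ref{sec:simplex} is then composed with the quotient map $\jmath:T\to\Delta^{n-1}$ by the symmetry group of the cube, so that the number of sheets colliding at any point is at most $\kappa'_n=2^n(n+1)!\,\kappa_n$, flat continuity on $\partial\Delta$ comes from the orientation-cancelling pairing of Claim~\ref{claim:pairs}, the component-length bound $4\delta\kappa'_n$ absorbs the controlled multiplicity, and a collar extension over $\partial\Delta\times[0,1]$ plus the Almgren-class computation produce the $S^{n-1}$-family and the contradiction. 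Without an analogue of this bounded-local-complexity control (or some substitute argument), your plan has a genuine gap at its central step.
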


We will also show that if $n \geq 3$, then this theorem can be somewhat improved by changing the definition of~$\w'_1(M)$. 
In the current definition, we are taking the infimum of the maximal length of the $\varphi$-images of the connected components of a $1$-cycle.
We can instead take the infimum of the maximal length of the connected components of its image under~$\varphi$.
Here is the formal definition of this new invariant

\begin{equation} \label{eq:Wtilde2}
\w''_1(M)= \inf_{\raisebox{-3pt}{\scriptsize $\Lambda$}} \, \sup_{u \in S^{n-1}} \, \max_{\raisebox{-3pt}{\scriptsize $C \subseteq \Lambda_u$}} \length(C),  
\end{equation}
where the infimum is taken over all families $\Lambda$ of $1$-cycles (more precisely, finite collections of closed curves) parametrized by $S^{n-1}$ on~$M$ (not $N$ as before!), that are continuous in either of the two topologies from the definition of $\w''_1(M)$ and correspond to the fundamental homology class of $M$ under the Almgren isomorphism.

In addition, one can require that $1$-cycle family~$\Lambda$ arises from a slicing of~$N$ as in Definition~\ref{def:Wtilde}.
More precisely, one can assume that the $1$-cycle family~$\Lambda$ is given by the image of the inverse images~$h^{-1}(u)$ of a continuous map $h:N \to S^{n-1}$ defined on a closed $n$-pseudomanifold~$N$ under a degree one map $\varphi:N \to M$ such that the fibers $h^{-1}(u)$ with $u \in S^{n-1}$ define a family of $1$-cycles on~$N$ continuous with respect to both topologies involved in Definition~\ref{def:Wtilde}.

So defined invariant~$\w''_1(M)$ has a very natural geometric meaning: It measures the maximal length of a connected component in an optimal sweepout of~$M$ by $1$-cycles, where the sweepouts are parametrized by the sphere of codimension one. 

\medskip

The following estimates also hold for this invariant.

\begin{theorem} \label{theo:A''}
Let $M$ be a closed $n$-manifold, $n\geq 3$.
Then every Riemannian metric on~$M$ satisfies 
\begin{align*}
\vol(M) & \geq c_n \, \w''_1(M)^n \\
\diam(M) & \geq c'_n \, \w''_1(M)
\end{align*}
for some explicit positive constants~$c_n$ and $c'_n$ depending only on~$n$.
\end{theorem}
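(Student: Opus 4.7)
The strategy is to deduce Theorem~\ref{theo:A''} from Theorem~\ref{theo:A'} by establishing the comparison
\[
\w''_1(M) \leq c_n \, \w'_1(M)
\]
valid in dimension $n \geq 3$, with a constant depending only on~$n$. Given this, the volume and diameter lower bounds for $\w''_1(M)$ follow immediately from the corresponding bounds for $\w'_1(M)$ established in Theorem~\ref{theo:A'}, since $\w'_1(M)\geq \w''_1(M)/c_n$ implies $\vol(M)\geq c_n^{-n}\,c'_n\,\w''_1(M)^n$ and similarly for the diameter.

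To establish this comparison, start from a family $\Xi$ on a closed $n$-pseudomanifold $N$ with a slicing $h\colon N\to S^{n-1}$ and a degree-one map $\varphi\colon N\to M$ such that $\max_C \length(\varphi|_C)\leq \w'_1(M)+\epsilon$, where $C$ ranges over the connected components of the fibers $h^{-1}(u)$. I would push this family forward to $M$ by setting $\Lambda_u=\varphi_*(\Xi_u)$; by the continuity assumptions on $\Xi$ and the continuity of $\varphi$, the family $\Lambda$ is continuous in both the flat topology and the weak varifold topology, represents $[M]$ via the Almgren isomorphism, and arises from the slicing of Definition~\ref{def:Wtilde}, so it is admissible in the definition of $\w''_1(M)$. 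A connected component $C'$ of $\Lambda_u$ is the union $\bigcup_i \varphi(C_i)$ over the components $C_i$ of $\Xi_u$ whose images intersect in $M$, and
\[
\length(C') \leq \sum_i \length(\varphi|_{C_i}) \leq k_{u,C'} \cdot (\w'_1(M)+\epsilon),
\]
where $k_{u,C'}$ counts how many components of $\Xi_u$ have $\varphi$-image inside $C'$. The whole issue is to bound $k_{u,C'}$ by a constant depending only on~$n$.

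This is where the hypothesis $n\geq 3$ enters through a transversality argument. Perturbing $\varphi$ generically within its homotopy class of degree-one maps $N\to M$ (and, if needed, modifying the sweepout compatibly), the restriction $\varphi|_{\Xi_u}$ of a $1$-dimensional subcomplex into an $n$-manifold is, for generic $u\in S^{n-1}$, an embedding, since the virtual dimension of the self-intersection set is $2\cdot 1-n=2-n<0$. Hence on an open dense subset of $S^{n-1}$ the components of $\Lambda_u$ are in bijection with those of $\Xi_u$, and $k_{u,C'}=1$ there. On the complementary stratum, which has codimension at least $n-2\geq 1$ in $S^{n-1}$, the singularities of the family $\{\varphi|_{\Xi_u}\}$ are of the finite list of generic types provided by a parametric transversality theorem for maps of $1$-complexes into $n$-manifolds, and each such type allows only a uniformly bounded number~$c_n$ of components to coalesce. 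Combined with the continuity of length in the weak varifold topology, this gives $\max_{C'}\length(C')\leq c_n\,(\w'_1(M)+\epsilon)$ for every $u$, and letting $\epsilon\to 0$ yields the desired comparison.

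The main obstacle is the parametric transversality step: one must choose the generic perturbation of $\varphi$ simultaneously for all $u\in S^{n-1}$ while preserving degree one, the slicing structure of $h$, and the continuity of the sweepout in both topologies, and one must extract from the stratification of $S^{n-1}$ an \emph{effective} uniform bound $c_n$ on the number of components of $\Xi_u$ that can merge in $M$. The restriction $n\geq 3$ is essential: in dimension two the map $\varphi|_{\Xi_u}$ carries a $1$-dimensional self-intersection locus for every generic $\varphi$, and arbitrarily many components of $\Xi_u$ can coalesce into a single component of $\Lambda_u$, so no such comparison can hold.
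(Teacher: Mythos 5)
There is a genuine gap, and it sits exactly where you place the ``main obstacle'': the uniform bound $k_{u,C'}\leq c_n$ on the number of components of $\Xi_u$ that can coalesce into one component of $\Lambda_u=\varphi(\Xi_u)$. Parametric transversality cannot deliver this in the setting of Definition~\ref{def:Wtilde}: an admissible family $\Xi$ for $\w'_1(M)$ is only a family of $1$-cycles on a pseudomanifold $N$, continuous in the flat and weak varifold topologies --- it need not be the fiber family of any map $h:N\to S^{n-1}$, let alone a smooth or PL one, so there is no stratified incidence set to which a Thom-type theorem applies, and you are only allowed to perturb the single map $\varphi$, not the family. Even if one grants a smooth generic model, the dimension count you invoke only controls the dimension of the set of parameters at which coincidences occur and the behaviour at \emph{generic} bad parameters; it does not bound, by a constant depending only on $n$, the number of components merging at the \emph{worst} parameter $u$ (exceptional fibers of the projection of the incidence set to $S^{n-1}$ can be arbitrarily large, depending on the sweepout, not on $n$). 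Since $\w''_1$ is defined by a supremum over all $u\in S^{n-1}$, a single such exceptional parameter destroys the estimate, so the claimed comparison $\w''_1(M)\leq c_n\,\w'_1(M)$ is not established by this argument (and it is not asserted anywhere in the paper; the paper only records the opposite chain $\w(M)\leq\w'_1(M)\leq\w''_1(M)$, so Theorem~\ref{theo:A''} is genuinely stronger than Theorem~\ref{theo:A'} and cannot be formally deduced from it without such a lemma). There is also a smaller definitional mismatch: you assume the $\w'_1$-optimal family is a slicing so that the pushed-forward family is admissible for $\w''_1$, which the definition of $\w'_1$ does not guarantee.

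The paper takes a different route that explains where the missing uniform bound actually comes from. It proves directly $\FR(M)\geq c_n\,\w''_1(M)$ (Theorem~\ref{theo:FR1ter}) and then applies Theorem~\ref{theo:FR}. There the sweepout is not arbitrary: it is built from a filling $P$ of $M$ in $L^\infty(M)$ chosen with $\kappa_n$-bounded local complexity (Proposition~\ref{prop:complexity}), so the number of fibers $h^{-1}(t')$ whose images can meet the image of a given fiber is bounded by an explicit constant depending only on~$n$ \emph{by construction}, not by genericity. The hypothesis $n\geq 3$ enters only to perturb the specific map $\bar f$ so that $\bar f_{|P^1}$ is an embedding and the images of non-adjacent fibers are disjoint. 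If you want to salvage your strategy, you would need to prove the comparison $\w''_1\leq c_n\,\w'_1$ as an independent lemma, and the natural way to do that is to re-import exactly this kind of bounded-complexity control on the family, which an arbitrary near-optimal $\w'_1$-sweepout does not come with.
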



As a further comment on the definition of~$\w'_1(M)$, we consider the following example.

\begin{example}
Let $M=N=(S^2,{\rm can})$ and $\varphi:S^2 \to S^2$ be the identity map. 
Consider a very fine triangulation of~$M$. 
Let us construct a map $h:M \to [0,1]$ as follows.
The inverse image of~$0$ under~$h$ is the (finite) collection of the centers of the $2$-simplices of the triangulation. 
When $t$ grows from $0$ to~$1$, the preimage~$h^{-1}(t)$ is a collection of concentric triangles  connecting the center of each simplex to its boundary. When $t=1$, the inverse image of~$t$ is the $1$-skeleton of the triangulation. 
Thus, the fiber~$h^{-1}(t)$ is a $1$-cycle, except at $t=1$.
Still, the map $[0,1) \to \mathcal{Z}_1(M;\Z)$ taking $t \in [0,1)$ to~$h^{-1}(t)$ extends by continuity at $t=1$ by sending $1$ to the zero $1$-cycle.
The map $S^1 \to \mathcal{Z}_1(M;\Z)$ thus-defined is continuous in the flat topology of $1$-cycles (albeit not in the weak topology of $1$-varifolds) and induces the fundamental class of~$M$ under the Almgren isomorphism.
Furthermore, the length of the connected components of this family of $1$-cycles can be arbitrarily small.
This example would tend to show that $\w'_1(M)$ is trivial.
Let us recall however that the map $S^1 \to \mathcal{Z}_1(M;\Z)$ is not continuous in the weak topology of $1$-varifolds and that the $1$-cycle at $t=1$ is not of the form $\varphi(h^{-1}(t))$ for $t=1$, as we changed its value for continuity reasons.
Thus, this $1$-cycle family does not occur in the definition of~$\w'_1(M)$.
Actually, we will show in Theorem~\ref{theo:C} that $\w'_1(M)$ is always positive.
\end{example}

\forget
\begin{remark}
The previous theorem means that every closed Riemannian $n$-manifold $M$ there exist a closed $n$-pseudomanifold~$N$, a degree~$1$ map $\varphi:N\to M$ and a continuous map \mbox{$h:N \to S^{n-1}$} such that
\begin{itemize}
    \item the images $\varphi(h^{-1}(t))$ form a continuous family of polyhedral $1$-cycles on~$N$, and the resulting map $S^{n-1}\to \mathcal{Z}_1(N;G)$
is continuous, where $G=\mathbb{Z}$ or $\mathbb{Z}_2$ depending on the orientability of~$M$. It also sends the fundamental homology class
of $S^{n-1}$ into the fundamental homology class of $M$. The corresponding map of $S^{n-1}$ into the space of $1$-varifolds on $M$ is also continuous;
\item for each $t\in S^{n-1}$ and each connected component $C_i(t)$ of~$h^{-1}(t)$, the length of $\varphi(C_i(t))$ does not exceed $c_n \, \FillRad(M)$ for some~$c_n >0$.
\end{itemize}
A previous remark, see Remark~\ref{rem:A'}, also implies that, if $n\geq 3$, then for each $t\in S^{n-1}$, the
images under $\varphi$ of the different connected components of $h^{-1}(t)$ are disjoint.
\end{remark}
\forgotten

\begin{remark}
Below, we will consider some generalizations and extensions of Theorem~\ref{theo:A}. 
It will be clear that they also hold true for the analogs of~$\w'_1(M)$ and $\w''_1(M)$.
We leave the (rather obvious) details to the reader.
\end{remark}

\subsection{Sweeping out lower-dimensional strata}

In the previous theorems, we consider $(n-1)$-parameter families of polyhedral $1$-chains sweeping out the whole manifold~$M$.
One may wonder whether one can extract $(k-1)$-parameter families of polyhedral $1$-chains sweeping out nontrivial $k$-dimensional homology classes of~$M$ or more generally essential $k$-complexes of~$M$ so that these $1$-chain sweepouts satisfy the same upper bounds as the (full) $1$-sweepout of Theorem~\ref{theo:A}.
Though examples can be found in~\cite{sab}, the existence of such sweepouts may not hold in general.
In our next result, we give topological conditions which ensure the existence of such sweepouts.
The existence of these sweepouts does not follow directly from the proof of Theorem~\ref{theo:A} and requires some new ideas.
In particular, we will need to change our main definition and make various changes in the proof of Theorem~\ref{theo:A}.

\medskip

First, let us introduce a more general notion of sweepout leading to a different notion of waist.

\begin{definition} \label{def:width2}
Let $\Phi:M \to K$ be a continuous map from a closed manifold~$M$ to a CW-complex~$K$.
A \emph{$\Phi$-homotopy $(p,k)$-sweepout} of~$M$ is a family
\[
\varphi[h^{-1}(t)] \subseteq M
\]
with $t \in T$, where $h:X \to T$ is a continuous map from a finite simplicial $(k+p)$-complex~$X$ to a finite simplicial $k$-complex~$T$ such that all fibers~$h^{-1}(t)$ are $p$-subpolyhedra of~$X$, and $\varphi:X \to M$ is a continuous map whose composition $\Phi \circ \varphi: X \to K$ is not homotopic to a map 
\[
X \overset{h}{\to} T \to K
\]
which factors out through~$h$.
Define the \emph{$\Phi$-homotopy $(p,k)$-waist} of a closed Riemannian manifold~$M$ as
\[
\w_{p,k}(M,\Phi) = \inf_{\raisebox{-3pt}{\scriptsize $\varphi,h$}} \sup_{t \in T} \,  \vol_p(\varphi_{|h^{-1}(t)})
\]
where the infimum is taken over all maps $\varphi:X \to M$ and $h:X \to T$  defining a $\Phi$-homotopy $(p,k)$-sweepout of~$M$.
If such sweepouts do not exist, we let \mbox{$\w_{p,k}(M,\Phi)=0$}.

We will be especially interested in the case where~$p=1$.
As in~\eqref{eq:wp}, we can assume that the homotopy $(p,k)$-sweepouts are parameterized by the sphere~$S^k$ and that the $\Phi$-homotopy $(1,k)$-waist of~$M$ is defined by minimizing the maximal length of the map~$\varphi$ restricted to the connected components of the fibers of~$\bar{h}:X \to S^k$. 
\end{definition}

Sweepout estimates also hold with this notion of waist when~$p=1$.

\begin{theorem} \label{theo:B}
Fix $k \leq n-1$.
Let $M$ be a closed $n$-manifold and $\Phi:M \to K$ be a continuous map to a CW-complex~$K$ with $\pi_i(K) = 0$ for every $i \geq k+1$.
Suppose that $\Phi_*([M]) \neq 0 \in H_n(K;G)$ for some homology coefficient group~$G$.
Then every Riemannian metric on~$M$ satisfies 
\begin{align*}
\vol(M) & \geq c_n \, \w_{1,k}(M,\Phi)^n \\
\diam(M) & \geq c'_n \, \w_{1,k}(M,\Phi)
\end{align*}
for some explicit positive constants~$c_n$ and~$c'_n$ depending only on~$n$.
\end{theorem}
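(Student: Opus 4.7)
\medskip

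\noindent\textbf{Proof proposal for Theorem~\ref{theo:B}.}

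The plan is to reduce the theorem to a filling radius estimate and then adapt the construction used in the proof of Theorem~\ref{theo:A}, with the crucial modification that the parameter space must be $k$-dimensional (rather than $(n-1)$-dimensional) and that the degree-one condition must be replaced by the homotopical non-factoring condition involving~$\Phi$. As with Theorem~\ref{theo:A}, by Gromov--Nabutovsky's inequality $\FillRad(M) \leq n\, \vol(M)^{1/n}$ and Katz's $\FillRad(M) \leq \diam(M)/3$, it is enough to prove the filling radius bound
\[
\FillRad(M) \geq c_n \, \w_{1,k}(M,\Phi);
\]
the volume and diameter estimates then follow.

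The argument is by contradiction. Assume $\FillRad(M) < R$ for a very small $R$ compared to~$\w_{1,k}(M,\Phi)$. Embedding $M$ isometrically in $L^\infty(M)$ via the Kuratowski embedding, choose a finite simplicial filling~$P$ contained in the $R$-neighborhood, with simplex diameters of order~$R$ and $M$ identified with a subcomplex of~$P$. Define a piecewise-linear map $\varphi: P \to M$ by sending each vertex of~$P$ to a nearest point of~$M$ (at distance~$\leq R$) and extending simplicially using the convex structure of~$L^\infty(M)$ composed with nearest-point projection back to~$M$; this makes $\varphi|_M$ homotopic to the identity. Set $X = P^{(k+1)}$, the $(k+1)$-skeleton of~$P$, and build $h: X \to T$ with $T$ a $k$-complex (obtained from~$P$ by collapsing $(k+1)$-cells toward their boundaries relative to small barycentric disks) so that the fibers $h^{-1}(t)$ are $1$-subpolyhedra of~$X$ whose image under~$\varphi$ has length $\leq C_n R$ on each connected component. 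This mirrors the combinatorial construction used in Theorem~\ref{theo:A} but produces a lower-dimensional parameter space~$T$ of dimension~$k$.

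The critical step is to verify the non-factoring condition: that $\Phi \circ \varphi: X \to K$ is not homotopic to any composition $g \circ h$ with $g: T \to K$. Suppose the contrary. Restricting to the subcomplex $M^{(k+1)} = M \cap P^{(k+1)} \subseteq X$, on which $\varphi$ is homotopic to the identity, yields $\Phi|_{M^{(k+1)}} \simeq g \circ h|_{M^{(k+1)}} : M^{(k+1)} \to T \to K$. Now invoke the hypothesis $\pi_i(K) = 0$ for $i \geq k+1$: by standard obstruction theory, the restriction map $[M,K] \to [M^{(k+1)}, K]$ is a bijection, so $\Phi$ itself is homotopic to the unique extension of $g \circ h|_{M^{(k+1)}}$ to~$M$. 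Using the Postnikov truncation~$T_k$ of~$T$ (into which any map $M^{(k+1)} \to T$ extends to a map $M \to T_k$ since $\pi_i(T_k) = 0$ for $i \geq k+1$) and the fact that $g: T \to K$ factors through~$T_k$ (again by the Postnikov property of~$K$), one deduces that $\Phi$ factors up to homotopy through the $k$-th Postnikov stage of~$T$. The final and technically delicate step is to extract from this factorization the vanishing $\Phi_*([M]) = 0$ in $H_n(K; G)$, yielding the desired contradiction.

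The main obstacle is precisely this last algebraic-topological step. For $k = n-1$ (or more generally whenever $M^{(k+1)} = M$) the contradiction is immediate because $H_n(T;G) = 0$ for a $k$-complex~$T$. For smaller~$k$, the naive factorization does not persist to all of~$M$, and one must work instead with the Postnikov tower of~$T$ and analyze how the class~$[M]$ is carried through the obstruction classes in the Postnikov tower of~$K$; the hypothesis $\Phi_*([M]) \neq 0$ for \emph{some} coefficient group~$G$ is used to detect the relevant obstruction. This Postnikov-theoretic ingredient is new relative to Theorem~\ref{theo:A}, where the degree-one hypothesis reduces everything to a single application of the fundamental class. The combinatorial construction of the sweepout with a $k$-dimensional parameter space~$T$ also requires slightly more care than in Theorem~\ref{theo:A}, but follows the same philosophy of handling all simplices of the filling simultaneously rather than inductively.
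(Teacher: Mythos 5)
Your reduction to a filling radius bound via Theorem~\ref{theo:FR} and the construction of a short-fiber family from a filling~$P$ are in line with the paper's proof of Theorem~\ref{theo:FR2}, but the decisive step is where your proposal breaks down, and you have correctly sensed it. Your strategy requires \emph{verifying} the non-factoring condition for the constructed family, which you attempt intrinsically on~$M$: restrict to $M^{(k+1)}$, use $\pi_i(K)=0$ for $i\geq k+1$ to identify $[M,K]\simeq[M^{(k+1)},K]$, and then push the hypothetical factorization through Postnikov truncations of~$T$. This chain only yields that $\Phi$ is homotopic to a map factoring through the $k$-th Postnikov stage~$T_k$ of a $k$-complex, and that does \emph{not} force $\Phi_*([M])=0$: Postnikov pieces have homology in arbitrarily high degrees (this is exactly why the hypothesis $\Phi_*([M])\neq 0$ in $H_n(K;G)$ with $\pi_{\geq k+1}(K)=0$ has content), so $H_n(T_k;G)$ need not vanish and the class $\tilde\beta_*([M])$ need not die. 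Except in the degenerate case $k=n-1$, the argument cannot be closed at this level of generality, so the ``technically delicate step'' you flag is a genuine gap, not a technicality.

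The paper's proof takes a different and, crucially, complete route: it never verifies non-factoring for its family. Instead, since the fibers have $\varphi$-length strictly below $\w_{1,k}(M,\Phi)$ and the waist is an infimum over genuine sweepouts, the family \emph{cannot} be a sweepout, so the factoring of $\Phi\circ\bar f$ through~$h$ \emph{does} hold; this factoring is then exploited geometrically on the filling rather than on~$M$. The complement of the neighborhood~$Q^{k+1}$ of the $1$-skeleton inside the union of $(k+1)$-cubes of~$P$ not in~$\partial P$ is homeomorphic to the mapping cylinder $R\times[0,1]/\!\sim$ of~$h$, so the factoring extends $\Phi\circ\bar f$ over the whole $(k+1)$-skeleton of~$P$ together with all of~$\partial P$ (the pieces of the construction coming from $\partial P$ are essential here, and are missing from your sketch, which only keeps $M^{(k+1)}$). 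Then $\pi_i(K)=0$ for $i\geq k+1$ extends the map over all of~$P$, and since its restriction to $\partial P$ represents $\Phi_*([M])$, this class bounds in~$K$, contradicting $\Phi_*([M])\neq 0$. The missing idea in your proposal is precisely this: derive the contradiction by extending over the filling~$P$, where $[\partial P]$ bounds by construction, rather than by trying to compress~$\Phi$ on~$M$ itself via obstruction or Postnikov theory.
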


The following example illustrates the theorem.

\begin{example}
The main examples arise when $K$ is the Eilenberg-Maclane space $K(G,m)$, where $G$ is an abelian group and $n=qm$.
Assume that the map $\Phi:M \to K$ represents a nonzero cohomology class
$c\in H^m(M;G)$ such that the $q$-th cup power of~$c$ is nonzero in $H^n(M;G)$. 
Assume also that the map $h:X \to M$ defined on a closed $k$-pseudomanifold~$X$ represents a homology class $a\in H_{k}(M;G)$ dual to a nonzero multiple of the $l$-th cup power of~$c$, where $k=lm$, in the sense that $\langle c^l,a \rangle \neq 0$.
In this case, the map $\Phi \circ h: X \to K$ does not factor through a $(k-1)$-dimensional complex $T$ since it induces  a nontrivial homomorphism between the 
$k$-dimensional homology groups of $X$ and $K$. 
Thus, Theorem~\ref{theo:B} yields curvature-free upper bounds for the homotopy $(1,k)$-waist of some lower-dimensional homology classes of $M$.
\end{example}


\subsection{Intrinsic geometric interpretation of the filling radius}
The filling radius of a closed Riemannian manifold~$M$ is defined in an extrinsic way from the Kuratowski embedding of~$M$ into~$L^\infty(M)$; see~Definition~\ref{def:FR}.
A different (more intrinsic) interpretation of the filling radius can be deduced from the filling radius estimate of Theorem~\ref{theo:FR1}.
More specifically, we show that the filling radius of a closed Riemannian manifold is roughly equal to its homology $1$-waist.

\begin{theorem} \label{theo:C}
There exist two explicit constant $c_n$ and~$C_n$ depending only on~$n$ such that every closed Riemannian $n$-manifold~$M$ satisfies 
\[
c_n \, \w(M) \leq \FR(M) \leq C_n \, \w(M).
\]
We can take $C_n=\frac{1}{2}$.
\end{theorem}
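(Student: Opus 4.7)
The lower bound $c_n\,\w(M)\leq\FR(M)$ is precisely the filling radius inequality of Theorem~\ref{theo:A}, so I focus on the upper bound $\FR(M)\leq\tfrac12\,\w(M)$.

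The plan is as follows. Fix $\epsilon>0$ and pick a near-optimal polyhedral $1$-sweepout --- a closed $n$-pseudomanifold $N$, a degree-one map $\varphi:N\to M$, and a continuous $h:N\to T$ to a finite $(n-1)$-polyhedron --- satisfying $\sup_{t\in T}\length(\varphi_{|h^{-1}(t)})\leq L:=\w(M)+\epsilon$. After a generic simplicial perturbation I may assume every fiber $h^{-1}(t)$ is a finite disjoint union of piecewise smooth closed curves on~$N$. Via the Kuratowski embedding $M\hookrightarrow L^{\infty}(M)$, I will construct an $(n+1)$-chain with boundary $[M]$ supported in the open $L/2$-neighborhood $U_{L/2}(M)$; letting $\epsilon\to 0$ then yields the theorem.

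To build this filling, let $\pi:N\to T'$ be the continuous quotient identifying two points of $N$ whenever they lie on the same connected component of a common fiber of $h$, and form the mapping cylinder
\[
\tilde N \;=\; \bigl((N\times[0,1])\sqcup T'\bigr)\bigm/ \bigl((x,1)\sim\pi(x)\bigr),
\]
an $(n+1)$-pseudomanifold whose only boundary face is $N\times\{0\}\cong N$. I then define $\Psi:\tilde N\to L^{\infty}(M)$ by the Banach-linear interpolation
\[
\Psi(x,s)\;=\;(1-s)\,\varphi(x)+s\,q(\pi(x)),\qquad \Psi|_{T'}=q,
\]
where $q:T'\to L^{\infty}(M)$ is a continuous ``centering'' map with $q(\tau)\in S_\tau$ for every $\tau\in T'$, setting
\[
S_\tau\;:=\;\bigl\{y\in L^{\infty}(M):\, d(y,\varphi(x))\leq L/2 \text{ for all } x\in\pi^{-1}(\tau)\bigr\}.
\]
Each $S_\tau$ is closed, convex, and nonempty: the image $\varphi(\pi^{-1}(\tau))$ is a closed curve in $M$ of length $\leq L$, any two of whose points lie within arc-length $L/2$, so the curve itself sits inside $S_\tau$. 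Granted the continuity of $q$, one checks that $d_{L^{\infty}}(\Psi(x,s),\varphi(x))=s\,d_{L^{\infty}}(\varphi(x),q(\pi(x)))\leq L/2$, so $\Psi(\tilde N)\subset U_{L/2}(M)$, while $\partial\Psi_{*}[\tilde N]=\Psi_{*}[N]=\varphi_{*}[N]=[M]$, producing the desired filling.

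The main obstacle will be producing the continuous selection $q$ of the correspondence $\tau\mapsto S_\tau$. As Example~\ref{ex:fish1} shows, the family of cycles $\varphi(h^{-1}(t))$ may fail to vary continuously in any natural topology, so no naive choice of a point on each cycle is continuous. My fix is to exploit the linear structure of $L^{\infty}(M)$ and work with the convex correspondence $\tau\mapsto S_\tau$ rather than the cycles themselves. Since $\pi$ and $\varphi$ are continuous, the correspondence $\tau\mapsto\varphi(\pi^{-1}(\tau))$ is upper-hemicontinuous, which --- after absorbing a harmless slack $\delta>0$ into the bound $L/2+\delta$ and eventually letting $\delta\to 0$ --- translates into the lower-hemicontinuity of the convex-valued correspondence $\tau\mapsto S_\tau$. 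Michael's selection theorem, applied to the compact (hence paracompact) quotient $T'$ and the Banach space $L^{\infty}(M)$, then supplies the required continuous~$q$ and completes the construction.
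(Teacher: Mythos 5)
Your lower bound is exactly the paper's: it quotes Theorem~\ref{theo:A} (i.e.\ Theorem~\ref{theo:FR1}), and that is fine. For the upper bound the paper takes a much shorter route than yours: it pulls the metric back to the sweepout pseudomanifold, $g_N=\varphi^*g_M+\lambda^2 g_0$, notes that $\varphi:(N,g_N)\to(M,g_M)$ is a $1$-Lipschitz degree-one map, and then chains $\FR(M)\leq\FR(N)\leq\tfrac12\,\UW(N)\leq\tfrac12\,\w(M)+\varepsilon$, using Gromov's monotonicity of the filling radius under contracting degree-one maps and his inequality $\FR\leq\tfrac12\,\UW$ as black boxes. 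Your proposal instead tries to build the filling in $L^\infty(M)$ by hand over the space $T'$ of connected components of fibers, and this is where it breaks.

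The serious gap is the sentence declaring the mapping cylinder $\tilde N$ of $\pi:N\to T'$ to be an $(n+1)$-pseudomanifold with boundary $N$, from which you extract a relative class with $\partial[\tilde N]=[N]$. The space $T'$ is only the quotient of $N$ by the decomposition into fiber components: a compact metrizable space (and even that, plus the upper hemicontinuity you invoke later, needs the upper semicontinuity of this decomposition, which you never establish), not a polyhedron, and $\tilde N$ carries no evident pseudomanifold structure or relative fundamental class. What your argument actually requires is $\pi_*[N]=0$ in $H_n(T';G)$, and this does not follow from what you know. The most you can say is that $T'$ has covering dimension $\leq n-1$ (via the Hurewicz dimension-lowering theorem applied to $T'\to T$, whose fibers are totally disconnected), but for general compacta singular homology above the covering dimension need not vanish, and nonzero classes there can even be carried by maps of closed pseudomanifolds (Barratt--Milnor-type phenomena); so ``$\dim T'\leq n-1$, hence $[N]$ bounds'' is not a proof. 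Repairing this forces you to replace $T'$ by the nerve of a fine cover with small $\pi$-preimages and push the centering construction through that genuine $(n-1)$-complex --- at which point you are re-proving Gromov's $\FR\leq\tfrac12\,\UW$ estimate, i.e.\ the very ingredient the paper simply cites after transferring the problem to $(N,g_N)$. A secondary, fixable, error: you may not assume (even ``after a generic perturbation'') that every fiber is a disjoint union of closed curves; Definition~\ref{def:width} only gives $1$-subpolyhedra, and Example~\ref{ex:fish1} shows graph fibers such as the $\theta$-graph are unavoidable --- upgrading fibers to cycles is precisely the nontrivial issue behind $\w'_1$. Your construction does not actually need circles: a connected component $C$ with $\length(\varphi_{|C})\leq L$ has image of diameter $\leq L$ in $M$, and hyperconvexity of $L^\infty(M)$ then makes $\bigcap_{x\in C}\bar B(\varphi(x),L/2)$ nonempty; but that is the argument you must give, not the ``arc-length $L/2$'' one, and it is also the reason you were forced to pass to components (the waist bounds the total fiber length, while distinct components can be far apart), which is exactly what drags the non-polyhedral space $T'$, and the gap above, into your proof.
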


\begin{remark}
Since $\w(M)\leq \w'_1(M)\leq \w''_1(M)$, we can combine the lower bound in the previous theorem with 
Theorem~\ref{theo:A''} to obtain the following alternative (and also imprecise up to a constant factor)
geometric interpretation of the filling radius when $n \geq 3$.
Up to at most a dimensional factor  $c(n)$, the filling radius of a closed Riemannian $n$-manifold~$M$ is equal to the maximal length 
of a connected component in an ``optimal" sweepout of $M$ by 
a continuous family of closed curves. Here, ``optimal" means that the sweepout (nearly) realizes the infimum of the minimal length. When $n=2$, this is still
true, but for a somewhat less geometrically intuitive definition of ``connected components of a sweepout" stemming from the definition of~$\w'_1(M)$.
(In this case, one looks at the images of connected components of a slicing of~$N$ under a degree one map $\varphi:N \to M$.)
\end{remark}

As shown in Proposition~\ref{prop:HS-FR-UW}, the hypersphericity of a closed orientable Riemannian manifold is roughly bounded by its filling radius (and so by its Urysohn width).
For Riemannian $2$-spheres, these Riemannian invariants are roughly the same; see Section~\ref{sec:HS}.
Still, there are examples of manifolds where the hypersphericity and the Urysohn width can be arbitrarily far apart; see~\cite{guth05}.
Applying the filling radius estimates of Theorem~\ref{theo:C} to these examples, we can strengthen this result by showing that the same occurs between the hypersphericity and the filling radius.

\begin{theorem} \label{theo:cex}
There exists a sequence~$(g_i)$ of Riemannian metrics on~$S^4$ with arbitrarily small hypersphericity and filling radius bounded away from zero.
\end{theorem}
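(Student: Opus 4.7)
The plan is to use the sequence of Riemannian metrics $(g_i)$ on $S^4$ constructed by Guth in \cite{guth05}, which satisfies $\HS(S^4, g_i) \to 0$ and has uniformly bounded-below Urysohn width $\UW(S^4, g_i) \geq c > 0$. The goal is to strengthen Guth's conclusion from the Urysohn width to the filling radius by means of Theorem~\ref{theo:C}. Since only the rough inequality $\FR \lesssim \UW$ holds in general (a map to an $(n-1)$-complex with small-diameter fibers controls the filling radius from above but not from below), the lower bound on $\UW$ does not automatically give a lower bound on $\FR$, so a genuinely new argument is required.

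The strategy is to invoke Theorem~\ref{theo:C}, namely $c_n \, \w(M) \leq \FR(M)$, and to establish a uniform positive lower bound on the homology $1$-waist $\w(S^4, g_i)$. I would revisit the proof in \cite{guth05} of the Urysohn width lower bound and verify that the same geometric mechanism (a non-collapsing volume in an essentially expander-like family of triangulations of $S^4$) that obstructs small-diameter fibers of maps to $3$-complexes also obstructs polyhedral $1$-sweepouts by short cycles. Once $\w(S^4, g_i) \geq c' > 0$ is in hand, Theorem~\ref{theo:C} immediately yields $\FR(S^4, g_i) \geq c_n c' > 0$, while by construction $\HS(S^4, g_i) \to 0$, giving the desired separation.

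The main obstacle is precisely this translation from a Urysohn-width lower bound to a waist lower bound. Since the length of a $1$-cycle and the diameter of a set are not comparable in general, a short fiber in a $1$-sweepout does not immediately correspond to a small-diameter fiber of a map to a $3$-complex. The natural approach is to start from a hypothetical polyhedral $1$-sweepout $(\varphi: N \to S^4,\, h: N \to T)$ with $\sup_{t \in T} \length(\varphi_{|h^{-1}(t)}) < \epsilon$, use the degree-one map $\varphi$ to push the sweepout data to $S^4$, and build from the result a continuous map $S^4 \to T'$ to a $3$-complex whose fiber diameters are controlled by $\epsilon$, contradicting Guth's Urysohn width estimate for $\epsilon$ sufficiently small. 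Executing this collapsing-and-refactoring step in a way that respects the degree-one condition on $\varphi$ and converts length bounds on $1$-cycles into genuine diameter bounds for fibers is the delicate technical point of the proof.
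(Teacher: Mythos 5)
Your first half is fine and matches the paper: taking Guth's spheres $S_i^4 \subseteq \HP^2$ with induced metrics, the hypersphericity tends to zero by the factoring lemma of~\cite[Lemma~5.2]{guth05} together with the cup-product argument ($\alpha \cup \alpha = 0$ in $H^8(S^4)$ while $\omega \cup \omega \neq 0$ in $H^8(\HP^2)$). The genuine gap is in your second half. The step you describe as "the delicate technical point" --- converting a polyhedral $1$-sweepout of $(S^4,g_i)$ with fibers of length $< \epsilon$ into a continuous map $S^4 \to T'$ to a $3$-complex whose fibers have diameter controlled by $\epsilon$, so as to contradict Guth's Urysohn width bound --- is not a technicality but precisely the missing idea. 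The cycles $\varphi(h^{-1}(t))$ sit in $S^4$ only as images under a degree-one map from a pseudomanifold $N$; they may overlap and are not the fibers of any map defined on $S^4$, so a length-controlled sweepout does not yield a diameter-controlled slicing of $S^4$. Whether $\UW$ can be bounded above in terms of $\w$ (equivalently, whether the filling radius and the Urysohn width are comparable) is exactly the question the paper leaves open right after Theorem~\ref{theo:cex}, so your reduction rests on an unproved, possibly false, comparison.

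The paper's proof sidesteps this entirely by never descending back to $S^4$ and never invoking the Urysohn width lower bound. Arguing by contradiction, if $\FR(S_i^4)$ were not bounded away from zero, then Theorem~\ref{theo:FR1} gives a sweepout $h_i:N_i \to T_i$ to a finite simplicial $3$-complex and a degree-one map $\varphi_i:N_i \to S_i^4$ with $\length({\varphi_i}_{|h_i^{-1}(t)}) < \varepsilon$ for all $t$; pulling the metric back to $N_i$ makes $\varphi_i$ contracting, and now the small-girth slicing lives on $N_i$ itself, where the fibers of $h_i$ genuinely are fibers of a map and have diameter $< \varepsilon$. Guth's Lemma~5.2 is then applied to the contracting map $f_i \circ \varphi_i: N_i \to \HP^2$: it is homotopic to a map factoring through the $3$-complex $T_i$, hence $(f_i \circ \varphi_i)_*([N_i]) = 0$ in $H_4(\HP^2)$, contradicting that $\varphi_i$ has degree one and $(f_i)_*([S_i^4]) \neq 0$. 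If you want to salvage your outline, the fix is to run the girth/factoring mechanism on the pseudomanifold $N$, where the sweepout is an honest slicing, rather than trying to turn it into a slicing of $S^4$.
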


It would be interesting to determine whether we can replace $\w(M)$ with the Urysohn width in Theorem~\ref{theo:C} or whether the filling radius can be arbitrarily far apart from the Urysohn width as in Theorem~\ref{theo:cex}.

\subsection{Homology $p$-waist bounds}

The bounds in Theorem~\ref{theo:A} about homology $1$-waist can be extended to homology $p$-waist using the notion of homological filling function defined below.

\begin{definition} \label{def:FH}
The $k$-homological filling function of a closed Riemannian $n$-manifold~$M$ is a function $\FH_k:[0,\infty) \to [0,\infty]$ defined as
\[
\FH_k(v) = \sup_{\vol_k(\Sigma_0^k) \leq v} \inf \{ \vol_{k+1}(\Sigma^{k+1}) \mid \partial \Sigma^{k+1} = \Sigma_0^k \}
\]
where the supremum is taken over all closed $k$-pseudomanifolds~$\Sigma_0^k$ in~$M$ of volume at most~$v$ and the infimum is taken over all compact pseudomanifolds~$\Sigma^{k+1}$ in~$M$ with boundary~$\partial \Sigma^{k+1} = \Sigma_0^k$.
By convention, $\inf \emptyset = \infty$. This means that if the $k$-th homology group of $M$ is nontrivial, then $\FH_k(v)=\infty$ for all $v$ greater than some $v_0$. 
This threshold value $v_0$ can, however, be arbitrarily large in comparison with, say, $\vol(M)^{k\over n}$.
For example, consider $M=S^1\times S^2$ endowed with the product metric,
where $S^1$ has a very large length $L$, but the area of $S^2$ is just $1\over L$. Although $H_1(M)$ is nontrivial, $\FH_1(v)$ will be finite
for all $v<L$.

Observe that the homological filling function~$\FH_k$ is nondecreasing and that $\FH_k(v) \leq \alpha_k \, v^{k+1}$ for every $v$ small enough, where $\alpha_k$ is some constant depending only on~$k$ involved in the isoperimetric inequality.
It will be convenient to introduce $\FHb_k(v) = \FH_k(2(k+1) v)$.
Also, we define $\FH_k(\infty)=\FHb_k(\infty)=\infty$.
\end{definition}

The notion of homological filling function was considered in~\cite{NR06} to bound the least area of a (possibly singular) minimal surface and, more generally, the least mass of a nontrivial stationary integral $k$-varifold in a closed Riemannian manifold whose first $k-1$ homology groups are trivial.
The homological filling functions can be estimated by taking a simplicial approximation and minimizing the volume of a filling using a elementary linear algebra argument in connection with systems of linear equations with integer coefficients given by the boundary operator; see~\cite{NR06} for more details.
Strictly speaking the definition of homological filling functions in~\cite{NR06} was stated in terms of singular chains and not pseudomanifolds, but this leads to the same notion after desingularization.

\medskip

The following result provides an extension of Theorem~\ref{theo:A} to higher dimensional homology waists; see~\eqref{eq:Wp}.

\begin{theorem} \label{theo:E}
Let $M$ be a closed Riemannian $n$-manifold.
Then, for every positive integer~$p$,
\begin{align*}
\w_p(M) & < \frac{1}{2^{n-p+1}} \textstyle{\binom{n+1} {p}}^{-1} \, \FHb_{p-1} \circ \cdots \circ \FHb_1(C_n \, \vol(M)^\frac{1}{n}) \\
\w_p(M) & < \frac{1}{2^{n-p+1}} \textstyle{\binom{n+1} {p}}^{-1} \, \FHb_{p-1} \circ \cdots \circ \FHb_1(C'_n \, \diam(M))
\end{align*}
for some explicit positive constants~$C_n$ and~$C'_n$ depending only on~$n$.
\end{theorem}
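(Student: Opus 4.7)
The plan is to prove Theorem~\ref{theo:E} by induction on~$p$, iterating a filling construction starting from the $1$-sweepout produced by Theorem~\ref{theo:A}. Each inductive step invokes the homological filling function~$\FH_k$ (with its rescaled version~$\FHb_k$ absorbing a combinatorial overcount) and contributes one composition~$\FHb_k$ to the final estimate.

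For the base case $p=1$, the claimed bound reduces, after absorbing the dimensional prefactor $\tfrac{1}{2^n(n+1)}$ into~$C_n$, to the volume inequality $\vol(M)\geq c'_n\w(M)^n$ from Theorem~\ref{theo:A}; the diameter version is analogous. Using the enhancement discussed around Example~\ref{ex:fish2}, I may also assume that every fiber of the resulting $1$-sweepout is a $1$-cycle.

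For the inductive step, suppose I have built a $k$-sweepout $(h_k: N_k \to T_k^{n-k}, \varphi_k: N_k \to M)$ whose fibers $h_k^{-1}(t)$ are $k$-cycles in~$N_k$ with $\vol_k(\varphi_k|_{h_k^{-1}(t)}) \leq V_k$. I would take a fine barycentric refinement of~$T_k^{n-k}$ and let $T_{k+1}^{n-k-1}$ be its $(n-k-1)$-skeleton. Collapse each top-dimensional $(n-k)$-simplex~$\Delta$ radially onto~$\partial\Delta$ away from a small neighborhood of its barycenter~$b_\Delta$, sending that neighborhood to a single point on~$\partial\Delta$. The image $\varphi_k(h_k^{-1}(b_\Delta))$ is a $k$-cycle in~$M$ of volume at most~$V_k$, hence bounds a $(k+1)$-chain~$F_\Delta$ in~$M$ of $(k+1)$-volume at most~$\FHb_k(V_k)$ by definition of the homological filling function; the factor $2(k+1)$ built into $\FHb_k$ absorbs the combinatorial overcount explained below. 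Enlarge~$N_k$ to a closed $n$-pseudomanifold~$N_{k+1}$ by attaching, for each~$\Delta$, a simplicial cone on $h_k^{-1}(b_\Delta)$ mapped to~$F_\Delta$ in~$M$, extending~$\varphi_k$ to a degree-one map $\varphi_{k+1}: N_{k+1}\to M$, and define $h_{k+1}: N_{k+1}\to T_{k+1}^{n-k-1}$ to factor through the radial collapse of top-dimensional simplices (so that the cone attached at~$\Delta$ is sent to the image of~$b_\Delta$ on~$\partial\Delta$). Each fiber $h_{k+1}^{-1}(s)$ is then a $(k+1)$-dimensional subpolyhedron assembled from contributions of the top-dim simplices meeting at~$s$, and the combinatorial weight $\binom{n+1}{k+1}^{-1}$ arises because in a balanced barycentric subdivision each $(n-k-1)$-face receives only a fixed share of the total $(k+1)$-volume of the fillings.

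Iterating this step $p-1$ times yields a $p$-sweepout realizing the stated volume bound; the diameter version follows identically, substituting $C'_n\diam(M)$ for $C_n\vol(M)^{1/n}$ in the base case. The main obstacle is the combinatorial implementation of the inductive step: assembling a globally continuous sweepout~$h_{k+1}$ from the local fillings $\{F_\Delta\}_\Delta$, gluing the attached cones along boundary faces shared by adjacent top-dimensional simplices so that~$\varphi_{k+1}$ is well-defined and of degree one, and ensuring that every fiber is an honest $(k+1)$-dimensional polyhedron. The balanced accounting in this gluing produces the constants $\tfrac{1}{2^{n-p+1}}\binom{n+1}{p}^{-1}$ after iteration, while the factor $2(k+1)$ hidden inside~$\FHb_k$ is exactly what makes the combined boundary cycles in the star of each $(n-k-1)$-face fit within a single application of~$\FH_k$.
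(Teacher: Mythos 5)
Your inductive step does not produce an object admissible in Definition~\ref{def:width}, and this is not a mere bookkeeping issue. If $h_{k+1}$ is $h_k$ composed with the radial collapse $c$ of each top simplex~$\Delta$ of~$T_k$, then the fiber of~$h_{k+1}$ over the point $s_\Delta\in\partial\Delta$ to which the neighborhood of the barycenter is crushed contains $h_k^{-1}$ of that entire $(n-k)$-dimensional neighborhood, i.e.\ an $n$-dimensional subpolyhedron of~$N_k$; attaching a cone on $h_k^{-1}(b_\Delta)$ does not remove it, so the fibers are not $(k+1)$-subpolyhedra and $\vol_{k+1}(\varphi|_{h^{-1}(t)})$ is not controlled. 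If instead you excise $h_k^{-1}$ of that neighborhood and cap it off by a $(k+1)$-dimensional filling, the total space is no longer a closed $n$-pseudomanifold (and strictly speaking even $N_k$ with a lower-dimensional cone attached violates Definition~\ref{def:pseudomanifold}), so there is no degree-one map $\varphi_{k+1}$ and the construction is not eligible for~$\w_{k+1}(M)$. Two further local problems: a filling pseudomanifold of a cycle is not a cone over it, so there is in general no continuous map from your attached cone onto~$F_\Delta$ extending~$\varphi_k$; and $\FH_k$ is defined for closed $k$-pseudomanifolds, whereas the inductively produced fibers (and their images in~$M$) are unions of earlier fillings glued along strata and need not be pseudomanifold cycles. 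Finally, the fiber of~$h_{k+1}$ over a point of an $(n-k-1)$-face collects one piece for every adjacent top simplex of~$T_k$, and nothing bounds that number by a constant depending only on~$n$, so neither the prefactor $\frac{1}{2^{n-p+1}}\binom{n+1}{p}^{-1}$ nor the role of the factor $2(k+1)$ hidden in~$\FHb_k$ is actually derived.

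For comparison, the paper does not induct on the sweepout dimension at all. It proves (Theorem~\ref{theo:FR3}) that $\w_p(M) \leq \frac{1}{2^{n-p+1}}\binom{n+1}{p}^{-1}\,\FHb_{p-1}\circ\cdots\circ\FHb_1(2\,\FR(M))$ by contradiction: take a filling $P\subseteq L^\infty(M)$ of~$M$ within radius close to~$\FR(M)$, use the natural $p$-sweepout of the cubical $(n+1)$-simplex, whose fibers are $p$-skeleta of cubes and hence contain at most $2^{n-p+1}\binom{n+1}{p}$ $p$-cells, and apply the coherent $\mathcal{R}$-transformation of Proposition~\ref{prop:cage}, which climbs the skeleta of each $p$-cube one dimension at a time (this is where the composition $\FHb_{p-1}\circ\cdots\circ\FHb_1$ comes from, the factor $2(k+1)$ in $\FHb_k$ being the number of $k$-faces of a $(k+1)$-cube) and replaces each product piece $\tau\times K$ by $\tau\times\Sigma_K$, so the total space stays a closed $n$-pseudomanifold, the fibers stay $p$-dimensional, and the degree argument goes through. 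Theorem~\ref{theo:E} is then immediate from $\FR(M)\leq n\,\vol(M)^{1/n}$, $\FR(M)\leq\frac{1}{3}\diam(M)$ and the monotonicity of the filling functions. If you want to salvage your approach, you would need a mechanism playing the role of this coherent replacement inside an $n$-dimensional product structure; as written, the proposal has a genuine gap.
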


The constants involved in the filling functions can be improved by following Remark~\ref{rem:improve}.
Note, that if all homology groups $H_i(M)$ for $i\in\{1,\ldots, p-1\}$
vanish, then the right-hand sides in both inequalities are always finite. However, if at least one of these homology groups is nontrivial, it is possible that one or both right-hand sides are $\infty$, and the inequality(ies) become trivial. \\

\noindent {\it Acknowledgment.}
This research has been partially supported by NSERC Discovery Grants RGPIN-2017-06068 and RGPIN-2018-04523 of the first two authors.
The third author would like to thank the Fields Institute and the Department of Mathematics at the University of Toronto for their hospitality where a large part of this work was done.

\section{Natural sweepouts of the standard cubical simplex} \label{sec:simplex}

In this section, we describe natural $p$-sweepouts of the standard cubical simplex defined as the fibers of a map from the cube to a complex of codimension~$p$. 
The reason we consider the standard cubical simplex instead of the standard simplex is because it is simpler to describe the natural sweepouts in this case. 
Similar constructions hold for the standard simplex.

\medskip

Let us start by describing a decomposition of the standard cube.
The case when $p=1$ is considered in Sections \ref{sec:FR1}, \ref{sec:FR1bis}, \ref{sec:FRhomotopy}, \ref{sec:UW} and~\ref{sec:HS}, while the general case is considered in Section~\ref{sec:pwidth}.
At first reading, one can assume that $p=1$.

\begin{definition}
Let $C^{n+1}=[-1,1]^{n+1}$ be the standard cubical $(n+1)$-simplex.
Fix an integer $p \geq 1$.
The $p$-skeleton~$(C^{n+1})^{(p)}$ of~$C^{n+1}$ is formed of the points of~$C^{n+1}$ all of whose coordinates except possibly $p$ of them are equal to~$\pm 1$.
That is,
\[
(C^{n+1})^{(p)} = \{ x \in C^{n+1} \mid \mbox{there exist } i_1,\dots,i_{n-p+1} \in \{1,\dots,n+1\} \mbox{ distinct such that } x_{i_k} = \pm 1 \}.
\]
The cubical $(n-p)$-complex~${Z}^{n-p} \subseteq C^{n+1}$ dual to~$(C^{n+1})^{(p)}$ is formed of the points of~$C^{n+1}$ with at least $p+1$ zero coordinates; see Figure~\ref{fig:1}.(c) and Figure~\ref{fig:3}.(c).
That is,
\[
{Z}^{n-p} = \{ x \in C^{n+1} \mid \mbox{there exist } i_1,\dots,i_{p+1} \in \{1,\dots,n+1\} \mbox{ distinct such that } x_{i_k} = 0 \}.
\]
Fix $\varepsilon \in (0,1)$.
The space
\[
{X}_{1,\varepsilon}^{n+1} = \{ x \in C^{n+1} \mid \mbox{there exist } i_1,\dots,i_{n-p+1} \in \{1,\dots,n+1\} \mbox{ distinct such that }  |x_{i_k}| \geq \varepsilon \}
\]
formed of the points of~$C^{n+1}$ with at most $p$ coordinate less than~$\varepsilon$ in absolute value is a tubular neighborhood of~$(C^{n+1})^{(p)}$.
Similarly, the space
\[
{X}_{2,\varepsilon}^{n+1} = \{ x \in C^{n+1} \mid \mbox{there exist } i_1,\dots,i_{p+1} \in \{1,\dots,n+1\} \mbox{ distinct such that } |x_{i_k}| \leq \varepsilon \}
\]
formed of the points of~$C^{n+1}$ with a least $p+1$ coordinates bounded by~$\varepsilon$ in absolute value is a tubular neighborhood of~${Z}^{n-p}$.
Both spaces~${X}_{1,\varepsilon}^{n+1}$ and~${X}_{2,\varepsilon}^{n+1}$ are endowed with a cubical structure, where the cubical simplices are bounded by the hyperplanes $x_i=\pm \varepsilon$ and $x_i= \pm 1$.
The cubical complexes ${X}_{1,\varepsilon}^{n+1}$ and~${X}_{2,\varepsilon}^{n+1}$ cover the cube~$C^{n+1}$ and intersect along a cubical $n$-complex 
\[
{Y}^n_\varepsilon = {X}_{1,\varepsilon}^{n+1} \cap {X}_{2,\varepsilon}^{n+1}
\]
which decomposes into a disjoint union
\begin{align}
{Y}^n_\varepsilon =  \sqcup_{k=p+1}^{n+1}  \{ x \in C^{n+1} \mid \mbox{there exist } i_1,\cdots,i_{k} \in \{1,\dots,n+1\} & \mbox{ distinct such that } \nonumber \\
 &  |x_{i_1}| \leq \varepsilon, \cdots, |x_{i_p}| \leq \varepsilon  \nonumber \\
 & |x_{i_{p+1}}| = \cdots = |x_{i_k}| = \varepsilon \nonumber \\
 & |x_i| > \varepsilon \mbox{ for every } i \neq i_1,\cdots, i_k \} \label{eq:Y}
\end{align}
of cubical $(n+p+1-k)$-complexes with $p+1 \leq k \leq n+1$.

Strictly speaking, the complexes ${X}_{i,\varepsilon}^{n+1}$, ${Y}^n_\varepsilon$ and~${Z}^{n-p}$ depend also on~$p$.
In order not to burden the notations, we keep the dependence on~$p$ of these complexes and the following constructions implicit.
\end{definition}

Let us define a natural $p$-sweepout of the standard cube.

\begin{definition} 
Let $\lambda_\varepsilon:[-1,1] \to [-1,1]$ be the odd piecewise linear function defined by
\[
\lambda_\varepsilon(t) =
\begin{cases}
\frac{t-\varepsilon}{1-\varepsilon} & \mbox{ if } t \in [\varepsilon,1] \\
0 & \mbox{ if } t \in [-\varepsilon,\varepsilon] \\
\frac{t+\varepsilon}{1-\varepsilon} & \mbox{ if } t \in [-1,-\varepsilon]
\end{cases}
\]
keeping $-1$, $0$, $1$ fixed and sending $[-\varepsilon,\varepsilon]$ to~$\{ 0 \}$.

Consider the map ${\theta}_\varepsilon:{Y}_\varepsilon^n \to {Z}^{n-p}$ defined by
\begin{equation} \label{eq:theta}
{\theta}_\varepsilon(x_1,\cdots,x_{n+1}) = (\lambda_\varepsilon(x_1),\cdots,\lambda_\varepsilon(x_{n+1})).
\end{equation}
By definition, every point of~${Y}_\varepsilon^n$ has at least $p+1$ coordinates bounded by~$\varepsilon$ in absolute value which are sent to~$0$ by~$\lambda_\varepsilon$.
This shows that the map~${\theta}_\varepsilon$ takes values in~${Z}^{n-p}$.
The preimage of~$z \in {Z}^{n-p}$ under~${\theta}_\varepsilon$ can be determined as follows.
Denote by $z_{i_1},\dots,z_{i_k}$ all the zero coordinates of~$z$.
Note that $k \geq p+1$.
By construction,
\begin{align*}
{\theta}_\varepsilon^{-1}(z) = \{ x \in {Y}_\varepsilon^n \mid \, & |x_{i_{s_1}}| \leq \varepsilon, \cdots, |x_{i_{s_p}}| \leq \varepsilon \mbox{ for } s_1,\dots,s_p \in \{1,\dots,k \} \mbox{ distinct} \\
 & |x_{i_s}| = \varepsilon \mbox{ for every } s \neq s_1,\dots,s_p \mbox{ in } \{1,\dots,k \} \\
 & x_j = \lambda_\varepsilon^{-1}(z_j) \mbox{ for every } j \neq i_1,\dots,i_k \}.
\end{align*}
Since ${\theta}_\varepsilon^{-1}(z)$ lies in~${Y}_\varepsilon^n$, all the coordinates $x_{i_1},\dots,x_{i_k}$ of $x \in {\theta}_\varepsilon^{-1}(z)$ are equal to~$\pm \varepsilon$, except possibly~$p$ of them.
Thus, the preimage~${\theta}_\varepsilon^{-1}(z)$ is a cubical $p$-complex isomorphic to the $p$-skeleton of the $k$-cube.
See Figure~\ref{fig:1} for a description of the sweepout of~$Y^n_\varepsilon$.

\begin{figure}[!htbp]
\centering
\includegraphics[width=12cm]{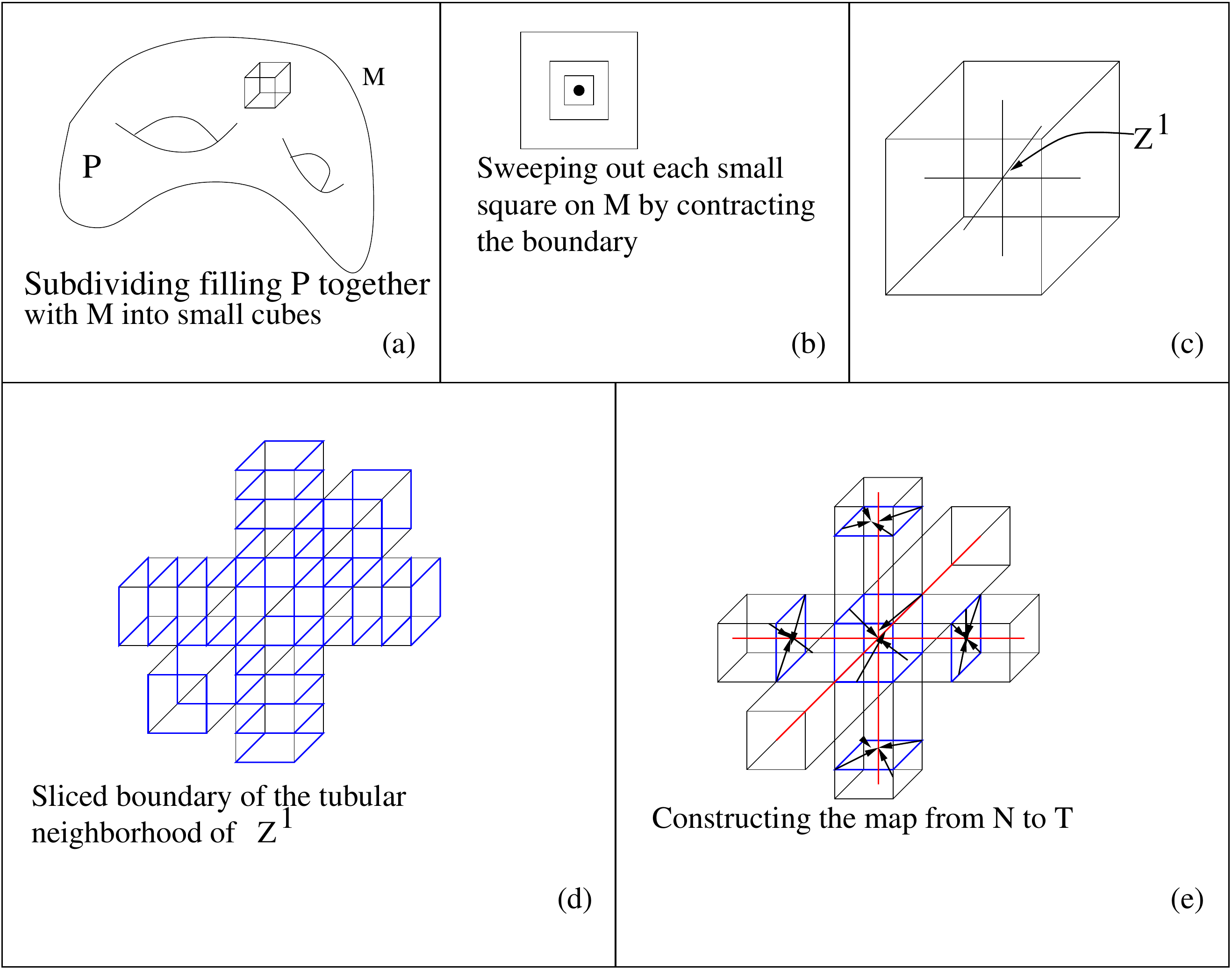}
\caption{Sweeping out $2$-dimensional
~$N$}
\label{fig:1}
\end{figure}


The other map~$\Theta$ we need to define will be used only on~$C^n$.
For this reason, we carry our construction on~$C^n$ and not on~$C^{n+1}$.
The subsets~${Y}_\varepsilon^{n-1}$ with $\varepsilon \in (0,1)$ foliate $C^{n} \setminus ((C^{n})^{(p)} \cup {Z}^{n-p-1})$.
More precisely, they are the level sets of the continuous map
\[
\begin{array}{rccc}
{\Theta}: & C^{n} \setminus ((C^{n})^{(p)} \cup {Z}^{n-p-1}) & \longrightarrow & {Z}^{n-p-1} \times (0,1) \\
 & x & \longmapsto & ({\theta}_\varepsilon(x),\varepsilon)
\end{array}
\]
where $\varepsilon$ is given by $x \in {Y}_\varepsilon^{n-1}$.
Thus, the map~${\Theta}$ is given by~${\theta}_\varepsilon$ on each subset~${Y}_\varepsilon^{n-1}$.

Define the simplicial $(n-1)$-complex
\[
\hat{{Z}}^{n-p} = {\rm Cone}({Z}^{n-p-1}) = {Z}^{n-p-1} \times [0,1]/ {Z}^{n-p-1} \times \{ 1 \}
\]
where ${Z}^{n-p-1} \times \{ 1 \}$ is collapsed to a point~$\star$.

The map~${\Theta}$ extends to a continuous map still denoted by
\begin{equation} \label{eq:fhat}
{\Theta}:C^{n} \to \hat{{Z}}^{n-p}
\end{equation}
where ${\Theta}(x)=({\theta}_0(x),0)$ for every $x \in {Z}^{n-p-1} = {Y}_0^{n-1}$ and ${\Theta}(x)=\star$ for every $x \in (C^{n})^{(p)}$.

The fibers of~${\Theta}$ define a \emph{natural $p$-sweepout} of the $n$-cube which is invariant by the group of symmetries of~$C^{n}$.

\begin{remark}
Loosely speaking, when $n=3$ and $p=1$, the boundary of the neighborhood of the dual to the~$1$ skeleton of the $3$-cube varies with~$\varepsilon$ from two extremes where it collapses to the $1$-skeleton of the $3$-cube or its dual~$Z^1$.
Deforming the slicing of~$Y^2_\varepsilon$ as $\varepsilon$ varies induces a natural sweepout of the $3$-cube; see Figure~\ref{fig:4}.(b).
\end{remark}

\begin{figure}[!htbp]
\centering
\includegraphics[width=14cm]{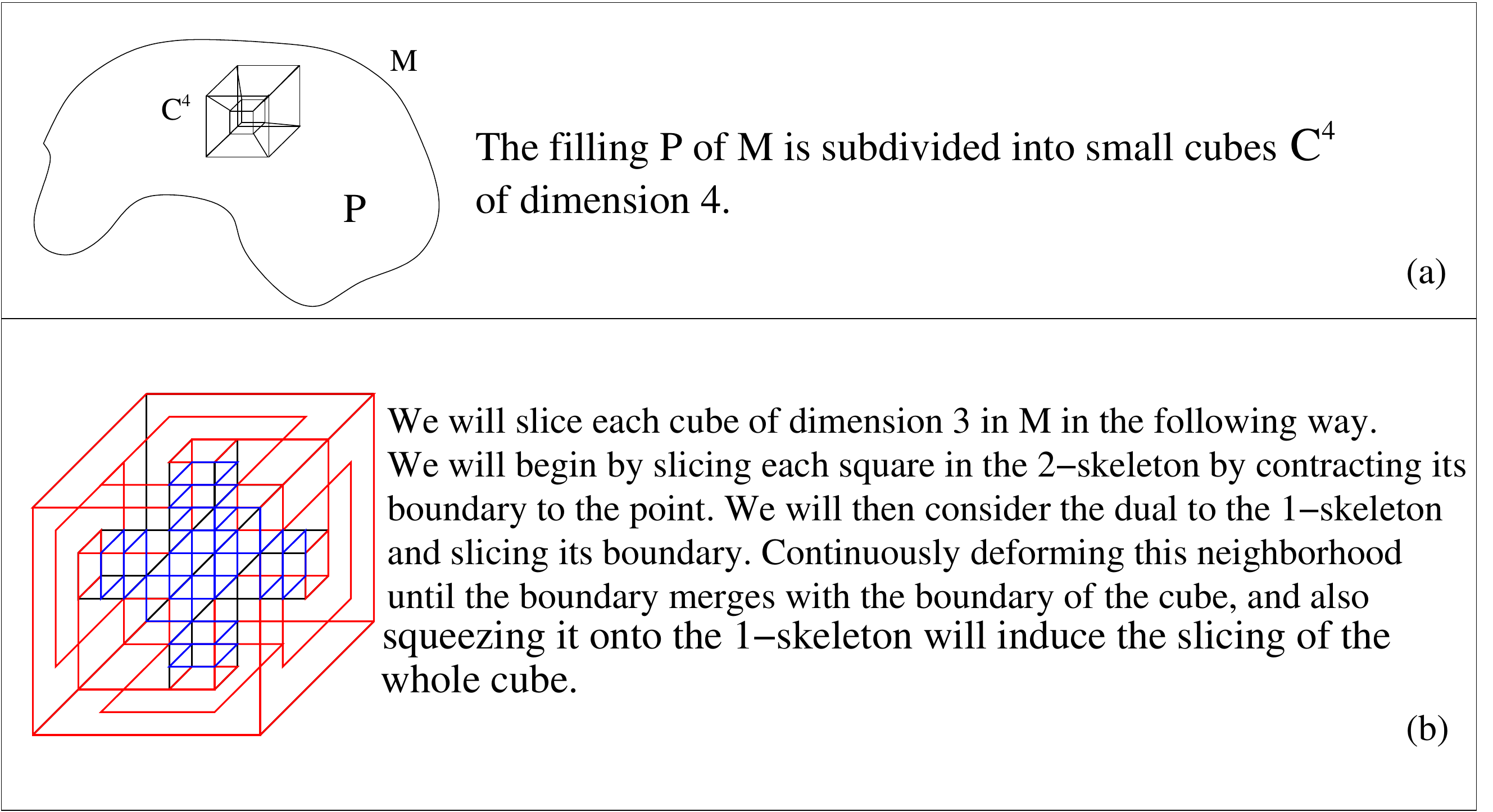}
\caption{Filling and sweeping-out $3$-dimensional $M$}
\label{fig:4}
\end{figure}

\begin{figure}[!htbp]
\centering
\includegraphics[width=14cm]{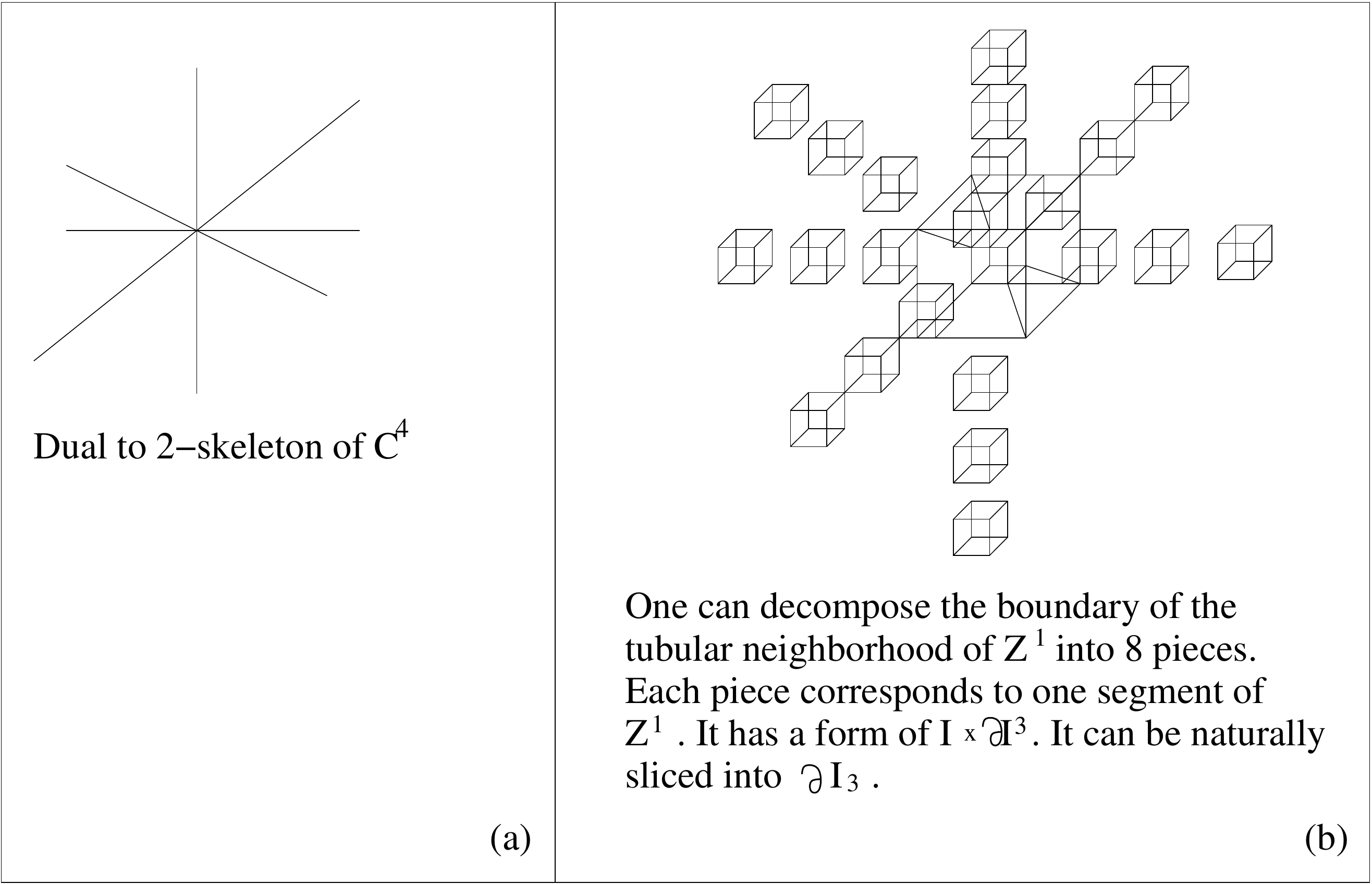}
\caption{Sweeping out the $4$-cube; $p$=2}
\label{fig:5}
\end{figure}

\begin{figure}[!htbp]
\centering
\includegraphics[width=14cm]{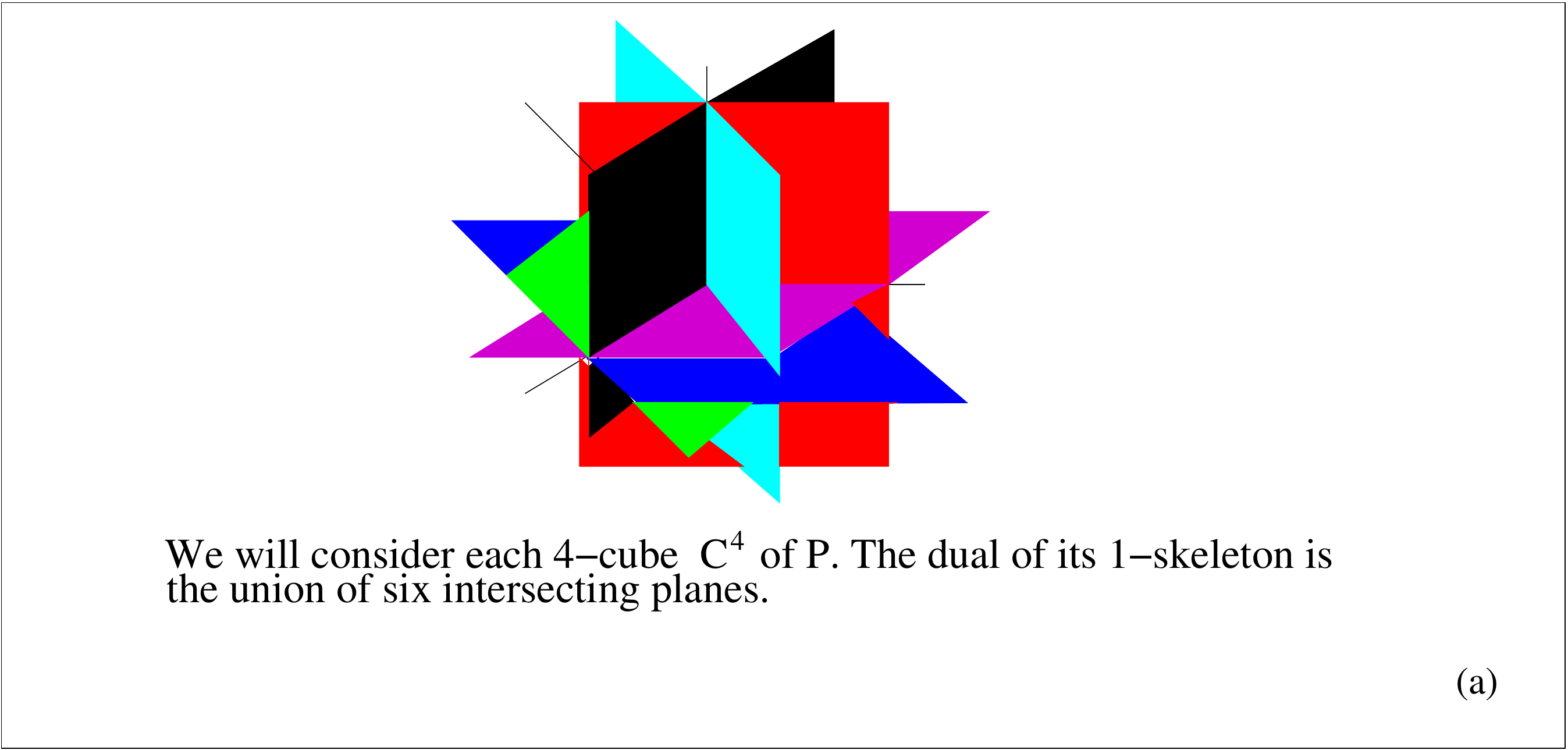}
\includegraphics[width=14cm]{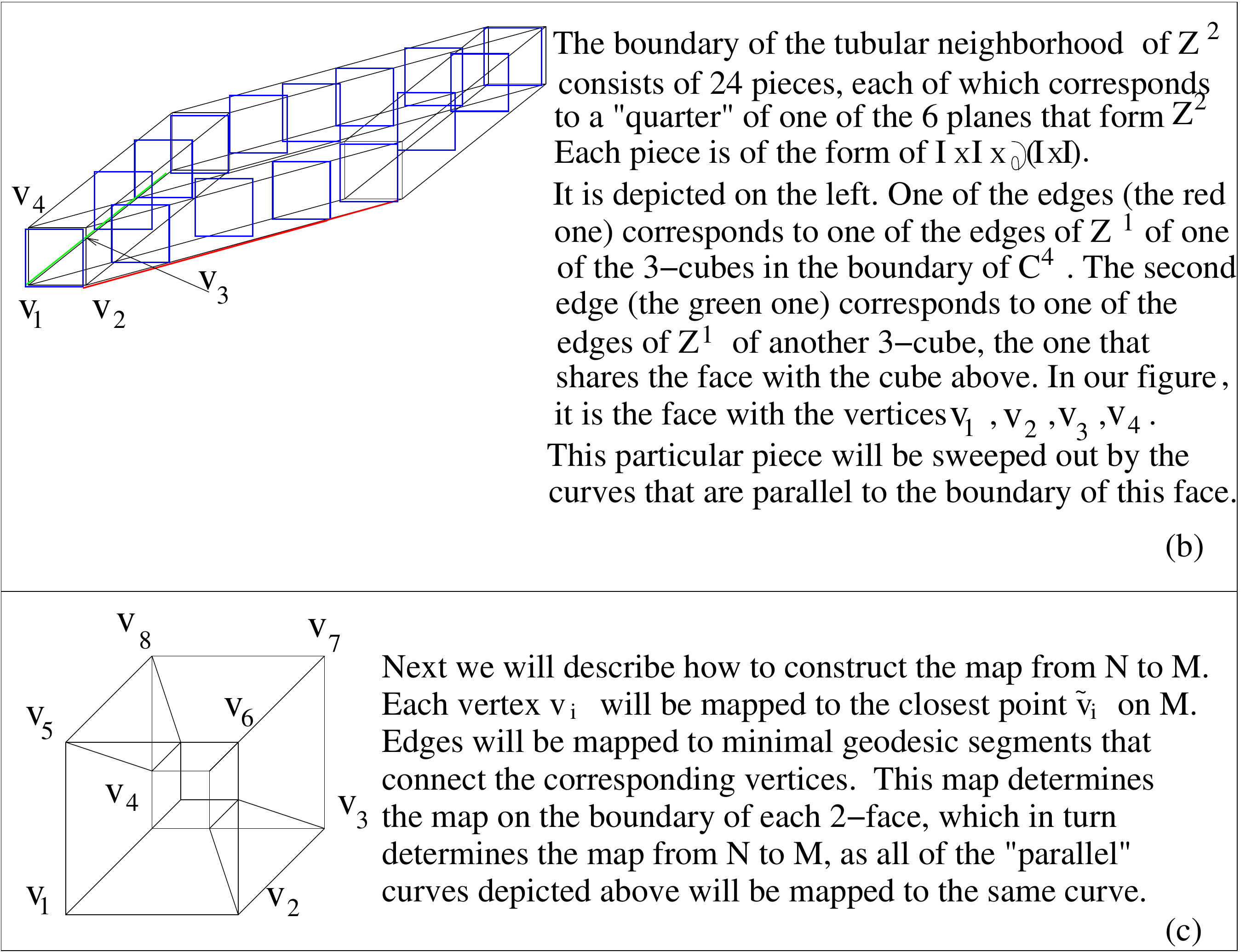}
\caption{Sweeping out the $4$-cube; dim $M=3$}
\label{fig:6}
\end{figure}

\end{definition}

Finally, let us define some deformations on the standard cube.

\begin{definition}
Let $\mu_\varepsilon:[-1,1] \to [-1,1]$ be the odd piecewise linear function defined by
\[
\mu_\varepsilon(t) =
\begin{cases}
1 & \mbox{ if } t \in [\varepsilon,1] \\
\frac{t}{\varepsilon} & \mbox{ if } t \in [-\varepsilon,\varepsilon] \\
-1 & \mbox{ if } t \in [-1,-\varepsilon]
\end{cases}
\]
sending $[\varepsilon,1]$ to~$\{ 1 \}$ and $[-1,-\varepsilon]$ to~$\{ -1 \}$.

Consider the map ${\rho}_\varepsilon:{X}_{1,\varepsilon}^{n+1} \to (C^{n+1})^{(p)}$ defined by
\begin{equation} \label{eq:rho}
{\rho}_\varepsilon(x) = (\mu_\varepsilon(x_1),\cdots,\mu_\varepsilon(x_{n+1})).
\end{equation}
By definition, every point of~${X}_{1,\varepsilon}^{n+1}$ has at least $n-p$ coordinates bounded below by~$\varepsilon$ in absolute value, which are sent to~$\pm 1$ by~$\mu_\varepsilon$.
This shows that the map~${\rho}_\varepsilon$ takes values in~$(C^{n+1})^{(p)}$.
Observe also that the map~${\rho}_\varepsilon$ fixes the vertices of~$C^{n+1}$ and sends every edge of~$C^{n+1}$ to itself.
We will refer to~${\rho}_\varepsilon$ as the ``retraction" of~${X}_{1,\varepsilon}^{n+1}$ onto~$(C^{n+1})^{(p)}$.
Note that the ``retraction"~${\rho}_\varepsilon$ extends to a degree one map 
\begin{equation} \label{eq:rho-bar}
\bar{{\rho}}_\varepsilon:C^{n+1} \to C^{n+1}.
\end{equation}
\end{definition}

In the following sections, we will fix~$\varepsilon = \frac{1}{2}$ and drop the subscripts.
For instance, we will write $Y^n$ for~$Y_\varepsilon^n$, $X_i^{n+1}$ for~$X_{i,\varepsilon}^{n+1}$, $\theta$ for~$\theta_\varepsilon$ and $\bar{\rho}$ for $\bar{\rho}_\varepsilon$.  

\medskip

We conclude this section with the following result.

\begin{proposition} 
The cubical $n$-complex $Y_\varepsilon^n$ is an $n$-pseudomanifold with boundary lying in~$\partial C^{n+1}$.
\end{proposition}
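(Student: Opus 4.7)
The plan is to verify that $Y^n_\varepsilon$ satisfies the three defining properties of an $n$-pseudomanifold with boundary---pure $n$-dimensionality, the property that every $(n-1)$-cell is a face of at most two $n$-cells, and strong connectedness of the dual graph on $n$-cells---and then to observe that the boundary $(n-1)$-faces all lie in $\partial C^{n+1}$. Throughout, I would classify each of the $n+1$ coordinates of a cubical cell of $Y^n_\varepsilon$ into one of five types: $A$ (fixed at $\pm 1$), $B$ (fixed at $\pm\varepsilon$), $M$ (ranging over $[-\varepsilon,\varepsilon]$), $O^+$ (ranging over $[\varepsilon,1]$), or $O^-$ (ranging over $[-1,-\varepsilon]$). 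Writing $a,b,o,m$ for the number of coordinates of each type (with $o$ counting $O^+$ and $O^-$ together), the cell lies in $Y^n_\varepsilon = X^{n+1}_{1,\varepsilon} \cap X^{n+1}_{2,\varepsilon}$ exactly when the $X_1$-condition $m \leq p$ and the $X_2$-condition $b + m \geq p + 1$ both hold, and its dimension equals $o + m$.

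For an $n$-cell, $o+m = n$ forces $a+b = 1$; combined with $b+m \geq p+1$ this gives $(a,b) = (0,1)$, so the $n$-cells are exactly the cubes with one type-$B$ coordinate, no type-$A$ coordinate, and at most $p$ type-$M$ coordinates among the remaining $n$. Given any cell of $Y^n_\varepsilon$, one extends it to such an $n$-cell by un-fixing every type-$A$ coordinate into its adjacent $O^\pm$ interval and un-fixing all but one type-$B$ coordinate, distributing the new labels between $M$ and $O^\pm$ so that the total number of $M$'s stays at most $p$; the initial inequality $b+m \geq p+1$ leaves enough slack to do this, yielding pure dimensionality. For $(n-1)$-cells, $a+b = 2$ together with $m \leq p$ and $b+m \geq p+1$ rules out the configuration $(a,b) = (2,0)$, leaving two families: Type~I with $(a,b) = (0,2)$ and $m \in \{p-1, p\}$, and Type~II with $(a,b) = (1,1)$ and $m = p$. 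A short check of the candidate extensions (obtained by un-fixing one of the two fixed coordinates into an adjacent interval) shows that Type~II cells admit a unique adjacent $n$-cell---the type-$B$ coordinate cannot be un-fixed without producing $b=0$, which violates $b+m \geq p+1$, so one is forced to un-fix the type-$A$ coordinate into its unique adjacent $O^\pm$ interval---while Type~I cells admit exactly two, obtained by un-fixing each of the two type-$B$ coordinates into $O^\pm$ when $m=p$ or into $M$ when $m=p-1$ (the other option being excluded in each case by $m \leq p$ or $b+m \geq p+1$). Since Type~II cells have a coordinate pinned at $\pm 1$, the pseudo-boundary of $Y^n_\varepsilon$ lies in $\partial C^{n+1}$.

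Finally, for strong connectedness, two $n$-cells share a Type~I $(n-1)$-face precisely when one is obtained from the other by a \emph{swap} that moves the unique type-$B$ coordinate from an index $i$ to an index $j$, simultaneously changing the old $M$ or $O$ label at $j$ to $B$ and the old $B$ at $i$ to $M$ or $O$. The main subtlety I anticipate here is ensuring that the constraint $m \leq p$ is preserved along a chain of swaps. I would argue that, given any two $n$-cells $A$ and $A'$, a sequence of swaps can first rotate the type-$B$ index of $A$ so as to coincide with that of $A'$---this is always possible because at each step one may choose the new ranging label to be $O^\pm$ rather than $M$ whenever $m$ is already saturated at $p$---and then adjust the remaining interval labels one coordinate at a time by temporarily parking the type-$B$ label at that coordinate and un-parking it with the desired new label. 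Composing these two operations reduces any $n$-cell to a fixed reference cell, yielding connectedness of the dual graph and completing the verification that $Y^n_\varepsilon$ is an $n$-pseudomanifold with boundary contained in $\partial C^{n+1}$.
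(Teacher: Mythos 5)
Your proof follows essentially the same route as the paper's: verify the three conditions of Definition~\ref{def:pseudomanifold} for the cubical structure of $Y^n_\varepsilon$, classify the $(n-1)$-cells by their fixed coordinates (two coordinates frozen at $\pm\varepsilon$ versus one at $\pm\varepsilon$ and one at $\pm 1$), show the former are shared by exactly two $n$-cells and the latter by exactly one, and observe that the latter lie in $\partial C^{n+1}$. Your bookkeeping via the counts $(a,b,m,o)$ and the two inequalities $m\leq p$, $b+m\geq p+1$ is just a systematic rephrasing of the paper's explicit product decomposition, and your Type~I/Type~II analysis of the $(n-1)$-cells matches the paper's three listed face types; you actually supply more detail than the paper on purity and on strong connectedness, which the paper merely asserts.

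One small correction: your characterization of the $n$-cells is off. With $b=1$ and $a=0$, the $X_{2,\varepsilon}^{n+1}$-condition $b+m\geq p+1$ forces $m\geq p$, so the $n$-cells have \emph{exactly} $p$ type-$M$ coordinates, not ``at most $p$''; a cube with one $B$, no $A$, and $m<p$ type-$M$ coordinates is not contained in $Y^n_\varepsilon$ at all. Consequently, in your purity argument the un-fixed $B$-coordinates must be distributed so that the final count of $M$'s equals exactly $p$; this is possible precisely because $b+m\geq p+1$ gives $b-1\geq p-m$, which is the slack you invoke, so the fix is immediate and nothing downstream breaks (your $(n-1)$-cell analysis already uses both inequalities correctly). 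In the connectedness sketch, the sign bookkeeping is glossed over: a parked $B$-coordinate sits at a specific value $\pm\varepsilon$ determined by the label it replaced, so converting, say, $O^+$ to $O^-$ requires first flipping that sign by a detour through a type-$M$ coordinate (which exists since $p\geq 1$). This is routine to complete, and the paper itself leaves strong connectedness as ``not difficult to see,'' so your sketch is at or above the paper's level of detail here.
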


First recall the general definition of a pseudomanifold.

\begin{definition} \label{def:pseudomanifold}
An $n$-pseudomanifold with boundary is a simplicial $n$-complex~$P$ such that
\begin{itemize}
\item every simplex of~$P$ is a face of some $n$-simplex of~$P$;
\item every $(n-1)$-simplex of~$P$ is the face of at most two $n$-simplices of~$P$;
\item given two $n$-simplices of~$P$, there exists a sequence of $n$-simplices of~$P$ with two consecutive $n$-simplices having an $(n-1)$-face in common that starts at one of them and ends at the other.
\end{itemize}
The boundary~$\partial P$ of an $n$-pseudomanifold~$P$ is the simplicial $(n-1)$-subcomplex of~$P$ formed of the $(n-1)$-simplices of~$P$ which are the faces of exactly one $n$-simplex of~$P$.
\end{definition}

\begin{proof}
The cubical $n$-complex~$Y_\varepsilon^n$ decomposes into a union of cubical $n$-simplices of two kinds
\[
[-\varepsilon,\varepsilon]^p \times  \{ \pm \varepsilon \} \times [\varepsilon,1]^{n-p} \quad \mbox{and } \quad [-\varepsilon,\varepsilon]^p \times  \{ \pm \varepsilon \} \times [-1,-\varepsilon]^{n-p}
\]
up to factor permutations; see~\eqref{eq:Y}.
The boundary of the first kind of cubical $n$-simplices is a union of $(n-1)$-faces of the following three forms
\[
[-\varepsilon,\varepsilon]^{p-1} \times \{ \pm \varepsilon \} \times  \{ \pm \varepsilon \} \times [\varepsilon,1]^{n-p}, 
[-\varepsilon,\varepsilon]^p \times  \{ \pm \varepsilon \} \times \{ \varepsilon \} \times [\varepsilon,1]^{n-p-1}, 
[-\varepsilon,\varepsilon]^p \times  \{ \pm \varepsilon \} \times  \{ 1 \} \times [\varepsilon,1]^{n-p-1}
\]
up to factor permutations.
The same holds for the boundary of the second kind of cubical $n$-simplices.
The $(n-1)$-faces involved in these unions have exactly two components of the form of a singleton  $\{ \pm \varepsilon \}$ or $ \{ \pm 1 \}$, with at most one singleton of the form~$\{ \pm 1 \}$.
The $(n-1)$-faces with no singleton of the form~$\{ \pm 1 \}$ appear in the boundary decomposition of exactly two cubical $n$-simplices.
For instance, $[-\varepsilon,\varepsilon]^{p-1} \times \{ \varepsilon \} \times  \{ - \varepsilon \} \times [\varepsilon,1]^{n-p}$ appears in the boundary decomposition of $[-\varepsilon,\varepsilon]^p \times \{ - \varepsilon \} \times [\varepsilon,1]^{n-p}$ and $[-\varepsilon,\varepsilon]^{p-1} \times  \{ \varepsilon \} \times [-\varepsilon,\varepsilon] \times [\varepsilon,1]^{n-p}$.
(Recall that every point in an open $n$-face of~$Y_\varepsilon^n$ has exactly $p+1$ coordinates bounded by~$\varepsilon$ in absolute value.)
The $(n-1)$-faces with exactly one singleton of the form~$\{ \pm 1 \}$ lie in~$\partial C^{n+1}$ and appear in the boundary decomposition of exactly one cubical $n$-simplex.
For instance, $[-\varepsilon,\varepsilon]^p \times \{ \varepsilon \} \times  \{ 1 \} \times [\varepsilon,1]^{n-p-1}$ appears in the boundary decomposition of $[-\varepsilon,\varepsilon]^p \times \{ \varepsilon \} \times [\varepsilon,1]^{n-p}$.
Furthermore, each $k$-face of~$Y_\varepsilon^n$ lies in a cubical $n$-simplex of~$Y_\varepsilon^n$.
Also, it is not difficult to see that given two cubical $n$-simplices of~$Y_\varepsilon^n$, there exists a sequence of cubical $n$-simplices of~$Y_\varepsilon^n$ with two consecutive cubical $n$-simplices having one $(n-1)$-face in common.
This shows that $Y_\varepsilon^n$ is a cubical $n$-pseudomanifold with boundary lying in~$\partial C^{n+1}$.
\end{proof}

\section{Filling radius and homology $1$-waist} \label{sec:FR1}

We establish a lower bound on the filling radius of a closed Riemannian manifold in terms of its homology $1$-waist and derive Theorem~\ref{theo:A}.

\medskip

Let us recall the notion of filling radius introduced by M.~Gromov in~\cite{gro83} to established systolic inequalities on essential manifolds.

\begin{definition} \label{def:FR}
Let $M$ be a closed $n$-manifold with a Riemannian metric~$g$.
Denote by~$d_g$ the distance on~$M$ induced by the Riemannian metric~$g$.
The map 
\[
i:(M,d_g) \hookrightarrow (L^\infty(M),|| \cdot ||)
\]
defined by $i(x)( \cdot ) = d_g(x, \cdot)$ is an embedding from the metric space~$(M,d_g)$ into the Banach space $L^\infty(M)$ of bounded functions on~$M$ endowed with the sup-norm~$|| \cdot ||$.
This natural embedding, also called the Kuratowski embedding, is an isometry between metric spaces.
We will consider $M$ isometrically embedded into~$L^\infty(M)$.

The \emph{filling radius} of~$M$ with a Riemannian metric~$g$, denoted by $\FR(M)$, is the infimum of the positive reals~$\nu$ such that 
\[
(i_\nu)_*([M]) = 0 \in H_n(U_\nu(M))
\]
where $i_\nu: M \hookrightarrow U_\nu(M)$ is the inclusion into the $\nu$-neighborhood of~$M$ in~$L^\infty(M)$, and $[M] \in H_n(M)$ is the fundamental class of~$M$. 
Unless specified otherwise, 
Here, the homology coefficients are in~$\Z$ if $M$ is orientable, and in~$\Z_2$ otherwise.
\end{definition}

The filling radius of a Riemannian manifold satisfies the following fundamental bounds respectively obtained by M.~Gromov~\cite{gro83} and M.~Katz~\cite{katz83} with an improvement in the constant recently obtained in \cite{nab} .

\begin{theorem}[see \cite{gro83}, ~\cite{katz83}, \cite{nab}] \label{theo:FR}
Let $M$ be a closed Riemannian $n$-manifold.
Then
\begin{align*}
\FR(M) & \leq n \, \vol(M)^\frac{1}{n} \\
\FR(M) & \leq \tfrac{1}{3} \, \diam(M)
\end{align*}
where $c_n$ is an explicit constant depending only on~$n$.
\end{theorem}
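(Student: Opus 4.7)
The plan is to establish both bounds by exhibiting, for $\nu$ equal to the claimed right-hand side, an explicit $(n+1)$-chain in $U_\nu(M) \subset L^\infty(M)$ whose boundary is the Kuratowski image of the fundamental class~$[M]$. Since $L^\infty(M)$ is a Banach space, affine combinations of finitely many points are always well-defined, which makes it the natural arena for such constructions: every filling can be built by taking the affine span of the Kuratowski images of a suitably chosen finite subset of~$M$.

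For the diameter bound, I would start from the basic observation that, for any $\alpha\in[0,D]$ with $D=\diam(M)$, the constant function $c_\alpha\equiv\alpha$ satisfies
\[
\|i(x)-c_\alpha\|_\infty \;=\; \sup_{y\in M}|d(x,y)-\alpha| \;\leq\; \max\{\alpha,\,D-\alpha\}.
\]
Taking $\alpha=D/2$ and coning $i(M)$ linearly to the single function~$c_{D/2}$ produces a filling supported in $U_{D/2}(M)$, giving the weaker bound $\FR(M)\leq D/2$. To sharpen to~$D/3$ (Katz's improvement), I would use the two constants $c_{D/3}$ and $c_{2D/3}$: connect $i(M)$ by a cone toward $c_{D/3}$, then deform through the segment joining $c_{D/3}$ to $c_{2D/3}$, and close up with a cone from $c_{2D/3}$; each piece has diameter at most $D/3$ in $L^\infty(M)$, while telescoping the three chains produces a bona fide $(n+1)$-chain whose boundary is $i_\ast[M]$.

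For the volume bound I would follow Nabutovsky's approach from \cite{nab}. Fix a small $r>0$ and a maximal $r$-separated net $\{p_1,\dots,p_N\}\subset M$. By Bishop--Gromov-type volume comparison applied to the disjoint balls of radius $r/2$, one has $N\cdot \kappa_n r^n \leq \vol(M)$ for an explicit dimensional constant $\kappa_n$. Form the nerve $\mathcal{N}$ of the cover by balls of radius $2r$, which is an at most $N$-vertex simplicial complex; extend the map $p_i\mapsto i(p_i)$ affinely on every simplex of $|\mathcal{N}|$ using the Banach structure of $L^\infty(M)$. The image lies in $U_{4r}(M)$, since each simplex of the nerve has vertices at mutual distance at most $4r$. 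Next, pull back to $M$ via a partition of unity subordinate to the cover to obtain a map $\psi:M\to|\mathcal{N}|$ with $\|i-\Phi\circ\psi\|_\infty\leq 4r$, so that $i_\ast[M]$ is homologous in $U_{4r}(M)$ to a cycle supported in $\Phi(|\mathcal{N}|)$; this latter cycle bounds in~$|\mathcal{N}|$ itself (after a controlled truncation to dimension $n$, where Nabutovsky's linear-algebraic filling argument operates). Choosing $r$ proportional to $\vol(M)^{1/n}$ and tracking every constant yields $\FR(M)\leq n\,\vol(M)^{1/n}$.

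The main obstacle is sharpness of the constants. For the diameter estimate, the factor $\tfrac13$ cannot be achieved via a single cone; the genuinely three-piece construction above is essentially forced, and one must verify carefully that the three chains glue into a cycle bounding the right class. For the volume estimate, getting the constant down to $n$ (as opposed to a larger function of~$n$ coming from naive nerve estimates) depends on Nabutovsky's combinatorial trick of filling the nerve cycle while controlling simultaneously the dimension of the filling and its distance from~$i(M)$. Once these two delicate points are addressed, both inequalities follow from the same filling template in $L^\infty(M)$.
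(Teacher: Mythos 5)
This theorem is not proved in the paper at all: it is quoted from Gromov~\cite{gro83}, Katz~\cite{katz83} and Nabutovsky~\cite{nab}, so your argument has to stand on its own, and as written both halves contain genuine gaps. For the diameter bound, your cone to the constant $c_{D/2}$ does give $\FR(M)\leq D/2$, but the sharpening to $D/3$ fails at its key quantitative claim. The distance from $i(x)$ to a constant $c_\alpha$ is $\sup_y|d(x,y)-\alpha|=\max\{\alpha,\,\mathrm{ecc}(x)-\alpha\}$, where $\mathrm{ecc}(x)=\sup_y d(x,y)\geq D/2$. On the round circle (or any round sphere) every point has eccentricity $D$, so $\mathrm{dist}\bigl(c_{D/3},\,i(M)\bigr)=2D/3$: the apex of your first cone already lies far outside $U_{D/3+\varepsilon}(M)$, so ``each piece has diameter at most $D/3$'' is false in the sense that matters (distance of the chain to $i(M)$). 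This is not a fixable bookkeeping issue: $\FillRad(S^1)=\tfrac{1}{3}\diam(S^1)$, so the constant is sharp and Katz's proof is not a cone-to-a-constant construction. Moreover the assembly is not a cycle-with-the-right-boundary: the first cone alone already has boundary $i_*[M]$, the segment $[c_{D/3},c_{2D/3}]$ is $1$-dimensional and contributes nothing in dimension $n+1$, and the ``cone from $c_{2D/3}$'' is over an unspecified cycle, so the claimed telescoping is not justified.

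For the volume bound, the step ``by Bishop--Gromov-type volume comparison, $N\kappa_n r^n\leq\vol(M)$'' is invalid: it needs a uniform lower bound $\vol(B(p,r/2))\geq\kappa_n r^n$, which holds only under curvature assumptions; for an arbitrary closed Riemannian manifold, balls of radius $r/2$ can have arbitrarily small volume (thin necks, thin tubes), so the cardinality of your net, and hence the nerve, is completely uncontrolled. This is precisely the difficulty that makes Gromov's inequality nontrivial, and the actual proofs (Gromov via filling volume, the Federer--Fleming deformation and the cone inequality; or the covering arguments of Guth, Papasoglu, Liokumovich--Lishak--Nabutovsky--Rotman, and~\cite{nab}, which yields the constant~$n$) select balls adaptively according to volume-growth criteria rather than taking a uniform net. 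In addition, the final step ``the nerve cycle bounds in $|\mathcal{N}|$ itself after a controlled truncation to dimension $n$'' is unsupported: there is no reason the pushforward of $[M]$ to the nerve is null-homologous within a controlled neighborhood, and the linear-algebraic filling argument of~\cite{NR06} addresses homological filling functions inside a fixed manifold, not this situation. So neither inequality is established by the proposal; since the paper simply cites the literature, the correct move here is either to cite as the paper does or to reproduce one of the genuine arguments.
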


The homology $1$-waist is related to the filling radius as follows.

\begin{theorem} \label{theo:FR1}
Let $M$ be a closed $n$-manifold.
Then every Riemannian metric on~$M$ satisfies 
\[
\FR(M) \geq c_n \, \w(M)
\]
for $c_n = \frac{1}{(n+1) \, 2^{n+1}}$.
\end{theorem}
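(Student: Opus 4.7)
The plan is to prove the contrapositive: for every $\nu > \FR(M)$, we construct a polyhedral $1$-sweepout of $M$ (in the sense of Definition~\ref{def:width}) whose fibers have image in~$M$ of length at most $(n+1)\,2^{n+1}\,\nu$. Passing to the limit $\nu \downarrow \FR(M)$ then gives $\w(M) \le (n+1)\,2^{n+1}\,\FR(M)$, which is exactly the stated estimate.

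\emph{Setup.} Since $\nu > \FR(M)$, the Kuratowski-embedded $M \subset L^\infty(M)$ bounds in $U_\nu(M)$, so we can fix a compact $(n+1)$-pseudomanifold $P$ with a continuous map $P \to U_\nu(M)$ whose restriction to $\partial P$ is the Kuratowski embedding of~$M$. Subdivide $P$ cubically so that every $(n+1)$-cell has $L^\infty(M)$-diameter at most $\delta > 0$, with $\delta$ to be sent to $0$ at the end. On each $(n+1)$-cell $\sigma$, identified affinely with $C^{n+1}$, Section~\ref{sec:simplex} with $p=1$ and $\varepsilon = \tfrac12$ produces the $n$-pseudomanifold $Y^n(\sigma)$, the dual $(n-1)$-complex $Z^{n-1}(\sigma)$, and the slicing $\theta_\sigma : Y^n(\sigma) \to Z^{n-1}(\sigma)$. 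On each $n$-cell $\tau$ of $\partial P = M$, we use instead the extended map $\Theta_\tau : \tau \to \hat{Z}^{n-1}(\tau)$ into the cone over $Z^{n-2}(\tau)$.

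\emph{Global assembly.} Arranging the cubical structures on adjacent cells to agree on their common faces (e.g.\ via the cubulation coming from the barycentric subdivision of~$P$), the pieces $Y^n(\sigma)$ glue across interior $n$-faces into an $n$-pseudomanifold $N_0$ whose boundary lies on $\partial P = M$; closing it up by adjoining the dual-neighborhood cap $X_2(M) = \bigcup_{\tau \subset M} X_2^n(\tau)$ yields a closed $n$-pseudomanifold~$N$. The local targets $Z^{n-1}(\sigma)$ and $\hat Z^{n-1}(\tau)$ glue along the common $Z^{n-2}(\tau)$'s into a finite $(n-1)$-polyhedron~$T$, and the $\theta_\sigma$ and $\Theta_\tau$ combine into a continuous $h : N \to T$; each fiber $h^{-1}(t)$ is isomorphic to the $1$-skeleton of a sub-cube of dimension $\le n+1$ and so has at most $(n+1)\cdot 2^n$ edges, each of $L^\infty(M)$-length at most~$\delta$. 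We define $\varphi : N \to M$ by simplicial approximation of the multi-valued nearest-point assignment: on a sufficiently fine triangulation of~$N$, we send each vertex $v$ to some $\varphi(v) \in M$ with $\|v - \varphi(v)\|_\infty < \nu$, extend by minimizing geodesics on $M$ over each simplex, and take $\varphi = \id$ on the cap. The chains $N$ and $M$ cobound the $(n+1)$-chain $X_1(P) = \bigcup_\sigma X_1^{n+1}(\sigma)$ inside $P$, so $[N] = [M]$ in $H_n(U_\nu(M);G)$; combined with the fact that $\varphi$ is homotopic to the Kuratowski inclusion of~$M$ through the straight-line homotopy in $L^\infty(M)$ (which stays inside $U_\nu(M)$), this gives $\varphi_*[N] = [M]$, and thus $\varphi$ has degree one.

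\emph{Length bound.} For any edge of $h^{-1}(t)$ with endpoints $v_1, v_2 \in N$, the Kuratowski isometry $d_M = \|\cdot\|_\infty$ on~$M$ together with the triangle inequality yield $d_M(\varphi(v_1), \varphi(v_2)) \le \|v_1 - \varphi(v_1)\|_\infty + \|v_1 - v_2\|_\infty + \|v_2 - \varphi(v_2)\|_\infty \le \delta + 2\nu$. Mapping each edge to a minimizing geodesic between its endpoints' images bounds $\vol_1(\varphi_{|h^{-1}(t)})$ by $(n+1) \cdot 2^n \cdot (\delta + 2\nu)$, which tends to $(n+1)\,2^{n+1}\,\nu$ as $\delta \downarrow 0$.

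\emph{Main obstacle.} The most delicate step is the global construction and verification of $(N, T, h, \varphi)$: ensuring that the local cubical sweepouts glue into a genuine closed $n$-pseudomanifold~$N$, and checking the homological identity $[N] = [M]$ so that $\varphi$ has degree one. As the introduction stresses, the skeleton-by-skeleton retraction arguments previously available (e.g.\ in dimension two or under topological hypotheses on~$M$) are not available for an arbitrary closed manifold, and all simplices of the filling must be treated simultaneously. The sign conventions in the cobounding identity $[N] - [M] = \pm\,\partial X_1(P)$, which forces the capping choice $X_2(M)$ rather than $X_1(M)$, and the compatibility of the chosen cubulations across shared faces, are the main technical concerns.
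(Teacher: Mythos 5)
Your construction of the slicing data is the same as the paper's (the cubical filling $P$, the interface pseudomanifolds $Y^n$ glued into $N'$, the cap made of the pieces $X_2^n\subseteq\partial P$, the target $T$ glued from the dual complexes, and the count of at most $(n+1)2^n$ edges per fiber, each of length about $2\nu+\delta$); the direct versus by-contradiction framing is immaterial. The genuine gap is in the map $\varphi$, in two places. First, you define $\varphi$ on all of the $n$-dimensional $N$ by sending vertices of a fine triangulation to nearby points of $M$ and then ``extending by minimizing geodesics over each simplex''. This extension is not available for simplices of dimension $\geq 2$: the images of their boundaries are loops (and spheres) of size comparable to $2\nu\approx 2\FR(M)$, which need not bound small disks in $M$ --- this is exactly the classical retraction obstruction the introduction says cannot be overcome in general. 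Moreover, even if some extension existed, your fiber-length bound would not follow: the vertices and edges of the fibers $h^{-1}(t)$ move continuously with $t$ and are not cells of any fixed triangulation, so vertex-wise control of $\varphi$ says nothing about $\length(\varphi_{|h^{-1}(t)})$. The paper avoids both problems at once by never extending beyond the $1$-skeleton: it defines $f$ only on $P^1$ (vertices to nearest points, edges to minimizing segments of length $<2\nu+\varepsilon$) and sets $\bar f=f\circ r$ on the whole neighborhood $Q=\bigcup X_1^{n+1}$ of $P^1$, where $r$ is the cube-wise ``retraction'' onto the $1$-skeleton, matched with $\bar\sigma=\sigma\circ\bar\rho$ on $\partial P$; then every fiber edge is mapped onto the image of an edge of $P^1$, hence onto a geodesic of length $<\delta$.

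Second, your degree-one verification does not work. You argue that $[N]=[M]$ in $H_n(U_\nu(M))$ and that $\varphi$ is straight-line homotopic in $L^\infty(M)$ to the inclusion; but by the very choice of $\nu>\FR(M)$ the class $[M]$ (and hence $[N]$) is \emph{zero} in $H_n(U_\nu(M))$, so pushing anything into $U_\nu(M)$ can never detect whether $\varphi_*([N])=[M]$ in $H_n(M)$: you only obtain $i_*\varphi_*([N])=0=i_*([M])$, which is vacuous. The correct argument --- and the reason the factorization through $P^1$ is essential --- is that $\bar f$ is defined on the $(n+1)$-dimensional complex $Q\cup\partial P$ \emph{with values in $M$}; since $N$ and $\partial P$ cobound $Q$ inside $Q\cup\partial P$, one can push this homology forward by $\bar f$ into $M$ itself and conclude $\varphi_*([N])=\bar f_*([\partial P])=\bar\sigma_*([\partial P])=[M]$ in $H_n(M)$. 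Without a map of the cobounding $(n+1)$-chain into $M$ (which your vertex-wise $\varphi$ does not provide), the degree-one requirement in Definition~\ref{def:width} remains unverified, and the estimate $\w(M)\leq(n+1)2^{n+1}\nu$ does not follow. The secondary issue of matching your $\varphi=\id$ on the cap with the interior definition along $\bigcup Y^{n-1}$ is handled in the paper by replacing the identity with $\bar\sigma=\sigma\circ\bar\rho$, and is easy once the first two points are repaired along the paper's lines.
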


\forget
\begin{remark}
The proof of Theorem~\ref{theo:FR1} actually works with rational coefficients and provides the stronger bound $\FR_\Q(M) \geq c_n \, \w(M)$ when $M$ is orientable; see~\eqref{eq:FRQ-FR}.
\end{remark}
\forgotten

\begin{proof}
We will work with cubical complexes instead of simplicial complexes and rely on the construction of Section~\ref{sec:simplex}.
By definition of the filling radius, the fundamental class~$[M]$ of~$M$ vanishes in the $\nu$-neighborhood~$U_\nu (M)$ of~$M$ in~$L^\infty(M)$, where $\nu > \FillRad(M)$ is very close to $\FillRad(M)$.
Therefore, there exists a compact cubical $(n+1)$-pseudomanifold $P \subseteq U_\nu(M)$ with boundary $\partial P =M$.
Subdivide~$P$ so that every cubical $(n+1)$-simplex of~$P$ has at most one $n$-face in~$\partial P$.

\medskip

Suppose that $\nu < \frac{1}{(n+1) \, 2^{n+1}} \, \w(M)$.
The usual argument to obtain a contradiction and derive a lower bound on the filling radius of~$M$ consists in constructing a retraction from~$P$ onto $\partial P=M$.
However, this may not be possible in our case.
Instead, we will construct a continuous extension~$\bar{f}$ of the identity map on~$M$ to a cubical $(n+1)$-complex containing an $n$-pseudomanifold~$N$ homologuous to~$M$ taking the fundamental classes of~$M$ and~$N$ to different homology classes in~$M$.
This will lead to a contradiction as wanted.


\medskip

It will be convenient to think of~$P$ as an abstract compact cubical $(n+1)$-pseudomanifold related to~$M$ through a continuous map $\sigma:P \to U_\nu(M)$ whose restriction $\sigma: \partial P \to M$ to~$\partial P$ satisfies
\begin{equation} \label{eq:sigma}
\sigma_*([\partial P]) = [M] \in H_n(M)
\end{equation}
Deforming the map~$\sigma$, we can assume that $\sigma$ takes every edge of~$\partial P$ to a minimizing segment of~$M$.
Denote by~$P^k$ the $k$-skeleton of~$P$.
Subdividing $P$ if necessary, we can further assume that the images by~$\sigma$ of the cycles of the natural sweepout of the cubical simplices of~$\partial P$ are of length  less than $\varepsilon < \min \{ \w(M)-2\nu, \frac{1}{2} \inj(M) \}$; see Section~\ref{sec:simplex} where $p=1$.

\medskip

We first define a map $f:P^0 \to M$ which agrees with~$\sigma$ on the vertices of~$\partial P$, by sending each vertex $p_i \in P^0$ to a nearest point of~$\sigma(p_i)$ in~$M \subseteq L^\infty(M)$, as we wish.
Since the inclusion $i:M \hookrightarrow U_\nu(M)$ is distance-preserving, every pair $p_i$,~$p_j$ of adjacent vertices of~$P$ satisfies
\[
d_M(f(p_i),f(p_j)) \leq d_{L^\infty}(f(p_i),\sigma(p_i)) + d_{L^\infty}(\sigma(p_i),\sigma(p_j)) + d_{L^\infty}(\sigma(p_j),f(p_j)) < \delta
\]
with $\delta=2 \nu + \varepsilon < \frac{1}{(n+1) \, 2^{n}} \, \w(M)$.
We extend the map $f$ to~$P^1$ by taking the edges of~$P$ to minimizing segments joining the images of their endpoints, as we wish. 
Observe that the map~$f$ agrees with~$\sigma$ on the edges of~$\partial P$.
By construction, the lengths of the images of the edges of~$P^1$ are less than~$\delta$.

\medskip

Let $Q \subseteq P$ be the neighborhood of~$P^1$ in~$P$ composed of the pieces $X_1^{n+1} \subseteq C^{n+1}$ corresponding to the cubical $(n+1)$-simplices of~$P$; see Section~\ref{sec:simplex} where $p=1$.
Put together, the ``retractions" $\rho:X_1^{n+1} \to (C^{n+1})^{(1)}$ defined in~\eqref{eq:rho} with $p=1$ give rise to a ``retraction" $r:Q \to P^1$.
Denote by $\bar{f}:Q \to M$ the composition of $r:Q \to P^1$ with $f:P^1 \to M$, that is, $\bar{f}=f \circ r$.
Deform $\sigma:\partial P \to M$ into $\bar{\sigma}:\partial P \to M$ so that the restriction of~$\bar{\sigma}$ to each cubical $n$-simplex of~$\partial P$ agrees with $\sigma \circ \bar{\rho}$, where $\bar{\rho}$ is the extension of the ``retraction"~$\rho$ defined in~\eqref{eq:rho-bar} with $p=1$.
By construction, the map $\bar{f}:Q \to M$ agrees with $\bar{\sigma}$ on~$\partial P \cap Q$ and can be extended to a map 
\begin{equation} \label{eq:f-bar}
\bar{f}:Q \cup \partial P \to M
\end{equation}
which agrees with~$\bar{\sigma}$ on~$\partial P$; see Figure~\ref{fig:3}.
Furthermore, the map~$\bar{f}$ takes every edge of~$P^1$ to a segment of length less than~$\delta$.

\begin{figure}[!htbp]
\centering
\includegraphics[width=16cm]{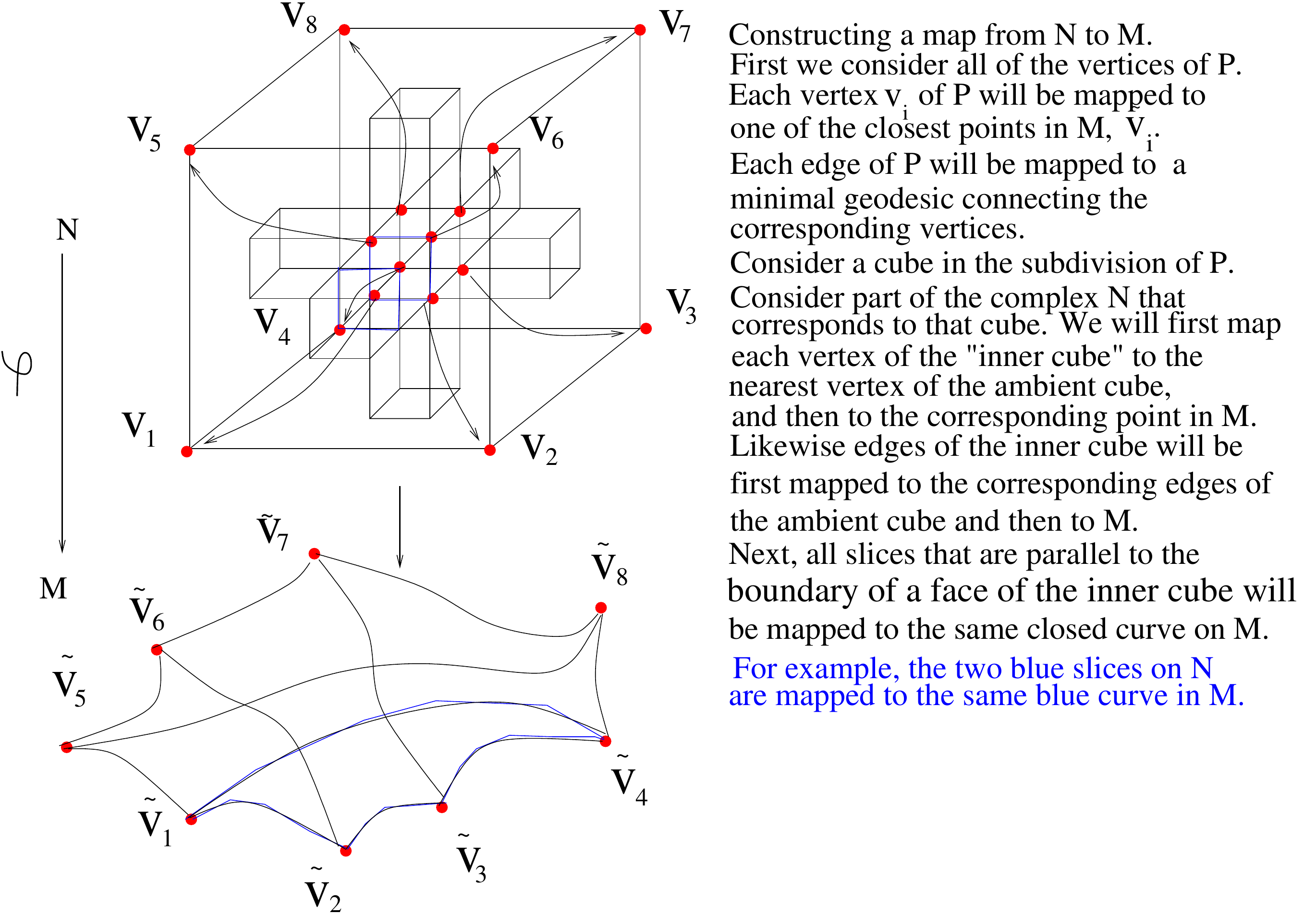}
\caption{Mapping~$Q$ to~$M$}
\label{fig:3}
\end{figure}

\medskip

The cubical $n$-pseudomanifolds~$Y^n \subseteq C^{n+1}$ pasted together according to the assembling pattern of the cubical $(n+1)$-simplices~$C^{n+1}$ of the pseudomanifold~$P$, see Section~\ref{sec:simplex} where $p=1$ (and Figure~\ref{fig:2}), form a compact cubical $n$-pseudomanifold~$N' \subseteq Q$ with boundary lying in~$\partial P$.
The map $\theta:Y^n \to Z^{n-1}$ defined in~\eqref{eq:theta} with $p=1$ gives rise to a map $h':N' \to T'$ to the finite cubical $(n-1)$-complex~$T'$ formed of the pieces~$Z^{n-1} \subseteq C^{n+1}$, where $C^{n+1}$ is a cubical $(n+1)$-simplex of~$P$.
More precisely, the restriction of~$h'$ to the pieces~$Y^n$ of~$N'$ is given by~$\theta$.
The cubical $n$-complexes $X_i^n \subseteq C^n$, where $C^n$ is a cubical $n$-simplex of~$\partial P \simeq M$, form a compact cubical $n$-pseudomanifold~$N_i'' \subseteq \partial P$ with the same boundary as~$N'$.
As previously, the map $\Theta:X_2^n \to Z^{n-2} \times [0,\frac{1}{2}]$ defined in~\eqref{eq:fhat} with $p=1$ gives rise to a map $h'':N_2'' \to T''$ to the finite cubical $(n-1)$-complex~$T''$ formed of the pieces~$Z^{n-2} \times [0,\frac{1}{2}]$ with $Z^{n-2} \subseteq C^n$, where $C^n$ is a cubical $n$-simplex of~$\partial P$.
Observe that $\Theta(x) = (\theta(x),\frac{1}{2})$ for every $x \in Y^n \cap X_2^n \simeq Y^{n-1} \subseteq C^n$.
Thus, the two maps $h'$ and~$h''$ so-defined agree on the common boundary of~$N'$ and~$N_2''$ after identifying $Z^{n-2} \subseteq Z^{n-1} \subseteq T'$ and $Z^{n-2} = Z^{n-2} \times \{ \frac{1}{2} \} \subseteq T''$, where $Z^{n-2} \subseteq C^n$ lies in~$\partial P$.
Put together, these maps give rise to a continuous map
\[
h:N \to T
\]
from the closed $n$-pseudomanifold $N = N' \cup N_2''$ lying in~$Q \cup \partial P$ to the cubical $(n-1)$-complex $T=T' \cup_S T''$ obtained by gluing~$T'$ and~$T''$ along the cubical $(n-2)$-complex~$S$ formed of the pieces $Z^{n-2} \subseteq C^n$, where $C^n$ is a cubical $n$-simplex of~$\partial P$.
(As a result of the inclusions $Z^{n-2} \subseteq Z^{n-1} \subseteq T'$ and $Z^{n-2} = Z^{n-2} \times \{ \frac{1}{2} \} \subseteq T''$, the complex~$S$ lies both in~$T'$ and in~$T''$.)
See Figure~\ref{fig:2} for a representation of~$N$.

\begin{figure}[!htbp]
\centering
\includegraphics[width=14cm]{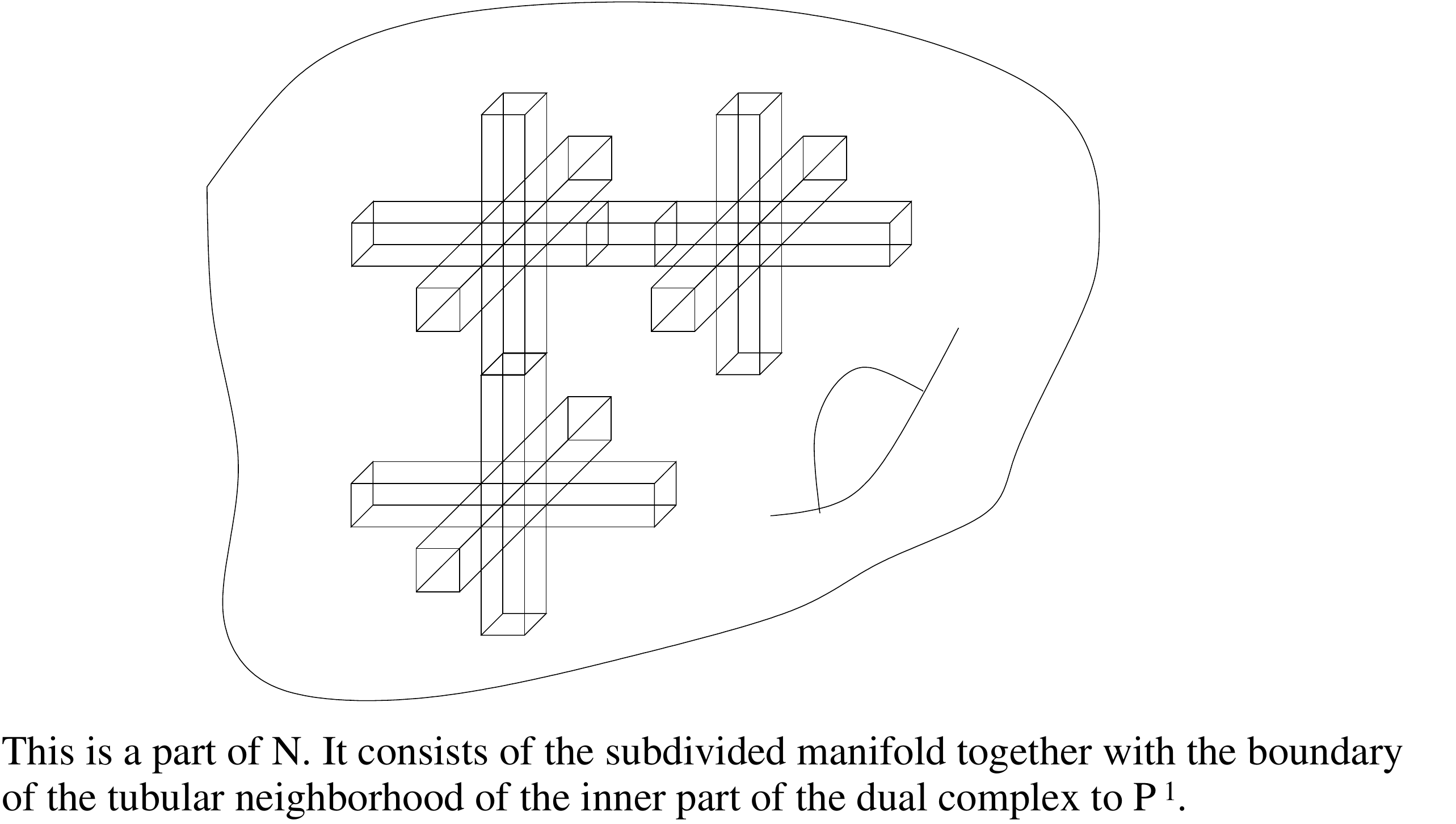}
\caption{Cubical structure of the pseudomanifold~$N$}
\label{fig:2}
\end{figure}


\medskip

By construction, every fiber~$h^{-1}(t) \subseteq N$ with $t \in T$ agrees with a fiber of~$\theta$ or~$\Theta$, and therefore is isomorphic to the $1$-skeleton of a cube of dimension at most~$n+1$; see Figure~\ref{fig:4}.
Thus, every fiber~$h^{-1}(t)$ has at most $(n+1) \, 2^n$ edges.
Furthermore, every fiber of~$h$ is sent by~$\bar{f}$ to a (possibly degenerate) cubical graph of length at most $(n+1) \, 2^n \, \delta < \w(M)$.
By definition of the homology $1$-waist, see Definition~\ref{def:width} where $p=1$, the restriction~$\bar{f}_|{N}:N \to M$ of~$\bar{f}$ to~$N$ is not of degree one.
That is,
\begin{equation} \label{eq:neq}
\bar{f}_*([N]) \neq [M] \in H_n(M).
\end{equation}

By construction, the pseudomanifolds~$N$ and~$\partial P$ are homologuous in~$Q \cup \partial P$.
Specifically, their difference as $n$-cycles bounds the pseudomanifold~$Q$.
This implies that
\begin{equation} \label{eq:f-sigmabar}
\bar{f}_*([N]) = \bar{f}_*([\partial P]) = \bar{\sigma}_*([\partial P])
\end{equation}
since $\bar{f}$ agrees with~$\bar{\sigma}$ on~$\partial P$.
Now, the map $\bar{\sigma}$ is a deformation of~$\sigma$.
Therefore,
\begin{equation} \label{eq:sigmabar}
\bar{\sigma}_*([\partial P]) = \sigma_*([\partial P]) = [M]
\end{equation}
by~\eqref{eq:sigma}.
Thus, the relations \eqref{eq:neq}, \eqref{eq:f-sigmabar} and~\eqref{eq:sigmabar} lead to a contradiction.
\forget
takes the fundamental class~$[N]$ to zero.
Therefore, there exists a continuous extension of~$f_{|N}:N \to M$ to a compact $(n+1)$-pseudomanifold~$Q'$ with boundary~$\partial Q' = N$.

\medskip

The boundaries $\partial P$, $\partial Q$ and~$\partial Q'$ decompose into
\[
\partial P = N_1'' \cup N_2'', \quad \partial Q = N' \cup N_1'', \quad \partial Q' = N' \cup N_2''.
\]
Combining the map $f:Q' \to M$ previously defined with $f:Q \to M$ yields a map $f':P' \to M$ defined on the compact $(n+1)$-pseudomanifold $P'=Q \cup_{N'} Q'$ with the same boundary as~$P$.
By construction, the map~$f'$ agrees with~$\bar{\sigma}$ on the common boundary of~$P$ and~$P'$.
This implies that the homology class $\bar{\sigma}_*([\partial P])$ vanishes.
On the other hand, since $\bar{\sigma}$ is a deformation of~$\sigma$, we have $\bar{\sigma}_*([\partial P]) = \sigma_*([\partial P]) = [M]$, which contradicts the previous claim.
\forgotten
\end{proof}

\begin{remark}
Working with simplicial complexes instead of cubical complexes yields a better constant in Theorem~\ref{theo:FR1}, namely, $c_n=\frac{1}{(n+1)(n+2)}$.
Still, we decided to work with cubical complexes since the constructions of Section~\ref{sec:simplex} are simpler to describe in this context.
\end{remark}

Theorem~\ref{theo:A} follows from Theorem~\ref{theo:FR1} and Theorem~\ref{theo:FR}.

\forget
\section{Filling radius and modified homology $1$-waist} \label{sec:FR1bis}

Let $M$ be a triangulated compact $n$-dimensional pseudomanifold with our without boundary. We will be interested in pseudomanifolds such that
each vertex is incident to at most $c(n)$ edges for some very large constants $c(n)$ that, however, depend only on $n$. We will say that $M$ has $c(n)$-bounded local complexity, or bounded local complexity. For example, each smooth manifold can be triangulated with bounded local complexity. One can see that, for example, from H. Whitney's
proof of the existence of smooth triangulations of manifolds. One embed a manifold into a high-dimensional Euclidean space and considers a subdivision
of the Euclidean space into very small cubes. Making  a generic shift of this triangulation one ensures that the manifold intersects each face
of each cube transversally. The intersections of the manifold with faces will be very close to the intersections with the $n$-plane and, therefore, can be subdivided in a locally bounded number of simplices.

Gromov proved in \cite{gro95}, section II' of ch. $5{5\over 7}$), that every triangulated closed $n$-dimensional pseudomanifold $Q$ such that each vertex is incident to at most $c$ simplices can be
filled by $(n+1)$-dimensional pseudomanifold $R$  that each vertex is incident
to at most $f(c,n)$ edges for some function $f$. In other words,a pseudomanifold with locally bounded complexity is a boundary of a pseudomanifold with
boundary with locally bounded complexity.

We will be interested in pseudomanifolds in Banach spaces. Observe, that if a closed pseudomanifold $M$ with bounded local complexity $Q$ is contained in
a small metric ball $B$ in a Banach space, and we represented it as a boundary $Q$ of pseudomanifold with boundary with bounded local complexity, we can project $Q$ to $B$ and
ensure that the resulting filling $Q'$ is in $B$ and has the same combinatorial structure as $Q$.

\begin{lemma} \label{prop:even} There exist positive integer numbers $c(1)<c(2)<\ldots $ such that:
\par\noindent
(a) Each smooth compact $n$-dimensional manifold can be triangulated with $c(n)$-bounded local complexity;
\par\noindent
(b) Each $n$-dimensional pseudomanifold with $c(n)$-bounded local complexity bounds $(n+1)$-dimensional pseudomanifold with  $c(n+1)$-bounded local complexity.
\par\noindent
(c) Let $(M,N)$ be a pair, where either (1) $M$ is a closed $n$-dimensional pseudomanifold and $N$ is either its $n$-dimensional sub-pseudomanifold with $c(n)$-bounded
local complexity or empty;
or (2) $M$ is a $n$-dimensional pseudomanifold with boundary, $N$ is an $(n-1)$-dimensional pseudomanifold of the boundary of $M$ (maybe, the whole $\partial M$), and $N$ has $c(n-1)$-bounded local geometry.
Then $N$ is also a subcomplex of a triangulated pseudomanifold $M'$ with $c(n)$-bounded local complexity. In the first case $M'$ is also closed, in the second it has a non-empty boundary $\partial M$, and
$N$ is a subcomplex of $\partial M'$. Moreover, if $M$ and $N$ are embedded in a Banach space $B$, then $M'$ is also embedded in $B$ and can be chosen so that its Hausdorff
distance to $M$ is arbitrarily small.
\end{lemma}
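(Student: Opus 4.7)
The plan is to prove the three parts in succession, choosing the sequence $c(1)<c(2)<\cdots$ recursively so that the local-complexity bound produced at each step is absorbed into the next.

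For part~(a), I would run Whitney's classical argument. Embed $M^n$ smoothly into $\R^{2n+1}$, impose a very fine cubical lattice on $\R^{2n+1}$, and translate the lattice by a generic small vector so that $M$ meets every face of every cube transversally. Each intersection of $M$ with a single $(2n+1)$-cube is then combinatorially one of only finitely many PL cell types depending only on $n$; a barycentric subdivision of these cells produces a triangulation of $M$ in which every vertex is incident to at most a constant $c(n)$ depending only on $n$.

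For part~(b), I would cite Gromov's filling lemma in \cite{gro95}, Chapter $5\tfrac{5}{7}$, Section~II$'$, which asserts that a triangulated closed $n$-pseudomanifold of local complexity at most $c$ bounds an $(n+1)$-pseudomanifold of local complexity at most $f(c,n)$ for an explicit function $f$. Taking $c(n+1)\ge f(c(n),n)$ yields the claim. The in-situ Banach-space version is automatic because Gromov's cone filling, being built out of straight-line segments from an auxiliary vertex, can be radially projected into any convex $\varepsilon$-neighborhood of the boundary without changing its combinatorics.

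For part~(c), the main new step, I would proceed as follows in case~(1). Working inside the Banach space $B$ (after passing to a sufficiently close finite-dimensional linear approximation), choose a cubical lattice whose mesh is much smaller than the shortest edge of the given triangulation of $N$, translated generically so that both $M$ and $N$ meet every cube face transversally. Outside a tubular neighborhood of $N$ of width equal to the lattice mesh, the transverse intersection with $M$ is triangulated with $c(n)$-bounded local complexity exactly as in~(a). Inside that tubular neighborhood, I would replace the transverse intersections by the mapping cylinder of the simplicial projection onto the given triangulation of $N$; this collar has combinatorial type depending only on the local combinatorics of $N$ and on $n$, matches the prescribed triangulation of $N$ at one end and the ambient cubical-transverse triangulation at the other, and therefore glues to yield the desired $M'$. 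Case~(2) reduces to case~(1) applied first to $\partial M$ with $N\subseteq\partial M$, followed by a collar into $M$ provided by part~(b). The main obstacle is precisely the valence control at the interface between the prescribed triangulation of $N$ and the ambient Whitney-type triangulation: a naive refinement would force the vertex degrees to blow up with the scale ratio between the two meshes, and the mapping-cylinder collar described above is designed exactly to provide a scale-free resolution of that incompatibility.
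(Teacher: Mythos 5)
Your parts (a) and (b) are fine and coincide with what the paper does: (a) is Whitney's fine-lattice transversality argument for \emph{smooth} manifolds, and (b) is a citation of Gromov's filling result, with the filling pushed into a small ball of the Banach space by projection (the paper records exactly this in a remark). The problem is part (c), which is where all the content of the lemma lies, and there your argument has a genuine gap.

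The hypothesis in (c) is only that $M$ is a pseudomanifold; its local complexity is a priori unbounded -- a vertex of $M$ may have a link containing arbitrarily many $(n-1)$-simplices, and an $(n-2)$-face may be shared by arbitrarily many $n$-simplices (the pseudomanifold condition only controls $(n-1)$-faces). Your key step, ``outside a tubular neighborhood of $N$ the transverse intersection with $M$ is triangulated with $c(n)$-bounded local complexity exactly as in (a)'', silently invokes (a), but (a) is a statement about smooth manifolds: the boundedness in Whitney's argument comes from the fact that inside a sufficiently small generic cube a smooth $n$-manifold is close to an affine $n$-plane, so the intersection is one of finitely many combinatorial cell types. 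A PL pseudomanifold has no such local model. However fine and generic the ambient lattice, every cube meeting a high-valence vertex or face of $M$ intersects $M$ in (a piece of) the cone over its complicated link, so the resulting complex inherits exactly the unbounded valences of $M$ and nothing has been gained; the difficulty is combinatorial and intrinsic to $M$, not an artifact of the embedding. This is precisely what the paper's proof is built to handle: it argues by induction on dimension, truncating $M$ along neighborhoods of its faces (each simplex becomes a permutohedron, triangulated in a fixed way) and then refilling each hole $Lk'(\mathrm{face})\times \mathrm{face}$, where the \emph{modified} link $Lk'$ -- assembled from the bounded-complexity pieces produced at the earlier stages -- is a lower-dimensional pseudomanifold of bounded local complexity and is filled, inside a small metric ball of $B$, by the bounded-complexity filling supplied by (b) and the inductive hypothesis, keeping the part lying in $N$ untouched. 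Some cut-and-refill step of this kind, which actually destroys the large links of $M$, is unavoidable; the lattice plus mapping-cylinder construction does not do it. Your treatment of case (2) is also off: part (b) produces fillings of closed pseudomanifolds, not a ``collar into $M$'', and keeping $N$ inside $\partial M'$ while staying Hausdorff-close to $M$ again requires the inductive surgery, not a reduction to case (1) followed by (b).
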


This lemma has the following immediate corollary:

\begin{proposition}
Let $M$ be a closed $n$-dimensional Riemannian manifold embedded to $L^\infty(M)$ using the Kuratowski embedding. Then for each $\epsilon>0$ $M$ bounds
a $(n+1)$-dimensional pseudomanifold $P$ with $c(n+1)$-bounded local complexity that is contained in $\FillRad(M)+\epsilon$-neighbourhood of $M$.
\end{proposition}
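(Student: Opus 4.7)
The plan is to derive the proposition directly from the preceding lemma combined with the definition of the filling radius. First I would unwind the definition of $\FR(M)$: for every $\epsilon>0$ there exists a compact $(n+1)$-pseudomanifold $P_0\subseteq U_{\FR(M)+\epsilon/2}(M)\subseteq L^\infty(M)$ with $\partial P_0 = M$; at this stage the local complexity of $P_0$ is entirely uncontrolled. Separately, by part (a) of the lemma, I would fix a triangulation $\tau$ of $M$ with $c(n)$-bounded local complexity, and after a compatible subdivision of $P_0$ I may assume that the induced triangulation on $\partial P_0$ agrees with $\tau$.

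Next I would apply part (c) of the lemma in case (2) to the pair $(P_0, M)$: the $(n+1)$-pseudomanifold $P_0$ has $M$ as a subcomplex of its boundary, and $M$ carries the fixed triangulation $\tau$ of $c(n)$-bounded local complexity. The conclusion of part (c) then produces an $(n+1)$-pseudomanifold $P$ with $c(n+1)$-bounded local complexity containing $M$ as a subcomplex of $\partial P$, embedded in $L^\infty(M)$ with Hausdorff distance to $P_0$ arbitrarily small by the ``Moreover'' clause. Choosing this Hausdorff distance less than $\epsilon/2$ forces $P\subseteq U_{\FR(M)+\epsilon}(M)$, as required.

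The main obstacle will be ensuring that $\partial P = M$ \emph{exactly}, rather than only that $M$ is a subcomplex of $\partial P$. One natural route is to observe that the modification in part (c) essentially leaves $\partial P_0 = M$ intact and only rearranges interior simplices in order to gain bounded local complexity. If that is not automatic from the statement as written, a cleaner workaround is to first apply part (b) to the abstract triangulated $M$ to obtain an abstract $(n+1)$-filling $\bar P$ with $c(n+1)$-bounded local complexity and $\partial\bar P = M$, and then to use the convex-projection principle recorded just before the lemma in order to realize $\bar P$ geometrically inside small balls covering $P_0$: convexity of metric balls in $L^\infty(M)$ lets one place each new vertex of $\bar P$ in a tiny ball near the corresponding region of $P_0$ and extend linearly on simplices without disturbing the combinatorial type. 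Either route produces the desired pseudomanifold $P$, so once the lemma is in hand the argument is essentially bookkeeping, matching the paper's description of the proposition as an ``immediate corollary''.
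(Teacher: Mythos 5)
Your main route is exactly the paper's proof: take a filling $P_0\subseteq U_{\FillRad(M)+\epsilon/2}(M)$ supplied by the definition of the filling radius, triangulate $M=\partial P_0$ with $c(n)$-bounded local complexity, and invoke the relative part of the lemma together with its Hausdorff-distance clause to replace $P_0$ by a bounded-complexity filling $P$ within Hausdorff distance $\epsilon/2$; moreover your worry about getting $\partial P=M$ exactly is not an issue, since the replacement statement is designed so that the boundary is kept literally intact (only interior links are cut out and refilled, ``keeping the part in $M$ intact''), which is how the paper treats the proposition as an immediate corollary. One caution: your fallback route, applying part (b) to an abstract triangulation of $M$ and then ``placing'' the abstract filling $\bar P$ near $P_0$ using convexity of balls in $L^\infty(M)$, is not viable, because $\bar P$ bears no combinatorial or metric relation to $P_0$ and $M$ itself does not lie in a small ball; the projection principle applies only to pieces (links of a fine triangulation of the geometric filling) that are already contained in small balls, and if one could realize an arbitrary abstract filling of $M$ inside $U_{\FillRad(M)+\epsilon}(M)$ the filling radius would impose no constraint at all. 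Fortunately the fallback is never needed, so the proposal stands on its primary argument.
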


\begin{proof}
Endow $M$ with a triangulation so that it has $c(n)$-bounded local geometry.
The definition of the filling radius implies that $M$ is a boundary of a pseudomanifold $P_1$ contained in its $\FillRad(M)+{\epsilon\over 2}$-neighbourhood. Use part (d) of the previous proposition to replace $P_1$ by $P$ that sits in a ${\epsilon\over 2}$-neighbourhood of $P_1$, is bounded
by $M$, and has $c(n+1)$-bounded local geometry.
\end{proof}

It remain to prove the proposition:

\begin{proof}
The discussion before the text of the proposition already implies (a) and (b).
We can start from a filling of $M$ by a polyhedron $P'$ contained in
$(Fill Rad(M)+\epsilon/2)$-neighbourhood of $M$.

So, it is necessary to prove (c). We proceed by induction. The case $n=1$ is obvious, so we can assume that the proposition is already established for all dimensions less than $n$.

Now assume that the triangulation of $M$ is very fine (but still has a bounded local complexity), and proceed in a standard way: we remove small neighbourhoods of all vertices, creating for each vertex $p$ a ``hole" bounded
by the pseudomanifold $Lk(p)$, where $Lk(p)$ denotes the link of $p$, then remove small neighbourhoods of all edges $e$ creating
``holes" $Lk(e)\times e$, and so on. At the end each $n$-simplex will be truncated to a certain polytope (called permutohedron).
We can triangulate it into a certain fixed way. The resulting polyhedra will automatically have bounded local complexity.

Now we will start filling the holes $Lk'(a\ face)\times the\ face$ starting from the faces of the highest codimension, where the face is already endowed with
a triangulation with bounded local complexity obtained on previous steps. Note that here $Lk'$ coincides with $Lk$ on the first two steps
(links of faces of codimension $1$ and $2$) but on later steps will
be a certain modification of $Lk$ -see below. (Note that when we remove
``stuff" from $M$, we also remove it from $Lk$. When we fill back
modifications of some of the removed parts of $M$, we also (sometimes)
add to $Lk$. By the time we processed all lower dimensions, $Lk$ becomes
a new closed pseudomanifold, that we denote $Lk'$ and call a modified link.)
However, unlike $Lk$,
$Lk'$ will always have bounded local complexity.
Each (modified)link lies in a very small ball
and is a pseudomanifold. Its intersection with $N$ is either empty or a connected submanifold with boundary with bounded local geometry.
As $Lk'$ is a pseudomanifold with bounded local complexity, it can be filled by a very small (in the metric of ambient Banach space $B$) pseudomanifold with bounded local complexity one dimension higher by induction assumption keeping its part in $N$ intact.

It remains to note that when we move on to faces of lower dimension (with higher dimensional links), we observe that each link already has been changed in the spirit of our construction: the top dimensional face (that are not in $N$) were truncated to
permutohedra, and stars of lower dimensional faces were replaced by insertions with bounded local complexity. The resulting pseudomanifolds will be the modified links $Lk'$ that were are going to modify at the coming step.

\end{proof}

\begin{remark}
\par\noindent
1. The proposition above is about triangulated manifolds. However,
we can fix any way to subdivide simplices into cubes, and will immediately see
that it will be true for pseudomanifolds subdivided into cubes;
\par\noindent
2. This proposition yields a control on the local complexity of the tringulation or the cubical structure of the filling, but it does not provide a control on the total number of  $n$-simplices/cubical $n$-simplices of the filling (which may be arbitrarily large).
\end{remark}

\section{Filling radius and modified homology $1$-waist} \label{sec:FR1bis}

We are interested in triangulated compact $n$-pseudomanifolds~$M$ with or without boundary such that each vertex is incident to at most $c(n)$ edges for some very large constants $c(n)$ depending only on~$n$. 
In this case, we say that $M$ has \emph{$c(n)$-bounded local complexity}, or \emph{bounded local complexity}. 

\medskip

For example, each smooth manifold can be triangulated with bounded local complexity. This follows, for example, from H.~Whitney's
proof of the existence of smooth triangulations of manifolds. 
First, embed a manifold into a high-dimensional Euclidean space and consider a subdivision of the Euclidean space into very small cubes. 
Making  a generic shift of this triangulation, one ensures that the manifold intersects each face of each cube transversely. 
If the cubes are small enough, the intersections of the manifold with the cubes are very close to the intersections with an $n$-plane and, therefore, can be subdivided in a locally bounded number of simplices.

\medskip

Gromov proved in~\cite[{\S $5{5\over 7}$.II'}]{gro99} that every triangulated closed $n$-pseudomanifold such that each vertex is incident to at most~$c$ simplices can be filled by a compact $(n+1)$-pseudomanifold such that each vertex is incident to at most $f(c,n)$ edges for some function~$f$. 
In other words, a closed pseudomanifold with locally bounded complexity is the boundary of a compact pseudomanifold with boundary with locally bounded complexity.

\medskip

The above discussion can be summarized by the following lemma.

\begin{lemma} \label{prop:even} 
There exist positive integer numbers $c(1)<c(2)<\ldots $ such that
\begin{enumerate}
\item each smooth compact $n$-manifold can be triangulated with $c(n)$-bounded local complexity;
\item each closed $n$-pseudomanifold with $c(n)$-bounded local complexity bounds a compact $(n+1)$-pseudomanifold with $c(n+1)$-bounded local complexity.
\end{enumerate}
\end{lemma}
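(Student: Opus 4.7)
The plan is to prove both parts by a coupled induction on~$n$, following the hints in the paragraphs preceding the statement.

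For~(1), carry out Whitney's construction. Embed $M^n$ smoothly into $\mathbb{R}^N$ with $N\le 2n+1$, and place a very fine cubical lattice on $\mathbb{R}^N$. A generic translation puts $M$ in transverse position with respect to every closed face. Inside each cube, $M$ is $C^1$-close to an affine $n$-plane, so the intersection of $M$ with the face structure of the cube realises one of finitely many combinatorial patterns depending only on $n$ and $N$. Triangulate each such intersection by a rule that depends only on its combinatorial type; the resulting triangulation of $M$ has vertex-degree bounded by a constant depending only on~$n$, which we absorb into~$c(n)$.

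For~(2), argue by induction on~$n$, with base case $n=0$ where a finite set of points is paired off by disjoint arcs. Assume $c(1),\dots,c(n)$ are already fixed, and let $M$ be a closed $n$-pseudomanifold with $c(n)$-bounded local complexity. At every vertex $v\in M$ the link $\mathrm{Lk}(v)$ is a closed $(n-1)$-pseudomanifold whose total size is bounded by a universal function of $c(n)$, because $v$ is incident to at most $c(n)$ edges. Hence only finitely many combinatorial types of links can appear, and each of them trivially has $c(n-1)$-bounded local complexity. The inductive hypothesis then furnishes, once and for all, a finite library of compact $n$-pseudomanifolds $\Psi_v$ with $c(n)$-bounded local complexity and $\partial\Psi_v=\mathrm{Lk}(v)$, one per combinatorial type.

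Now assemble an $(n+1)$-dimensional filling of $M$ as follows. Start with the prism $M\times[0,1]$ endowed with its canonical prism triangulation, whose local complexity is bounded in terms of $c(n)$ alone. Its outer boundary is~$M$ and its inner boundary is a second copy of~$M$, which must be capped off. Do this stratum by stratum: remove the open star of each vertex on the inner copy and fill in with the matching library element~$\Psi_v$, chosen of arbitrarily small diameter if an ambient Banach-space embedding is imposed. Then repeat with the modified links of the edges, of the $2$-faces, and so on up to codimension one, each time invoking the bounded library supplied by the induction hypothesis at the appropriate dimension. The process terminates when no $n$-dimensional boundary face remains unmatched, leaving a compact $(n+1)$-pseudomanifold with boundary~$M$ whose local complexity at every stratum is controlled by a constant depending only on~$n$; declare $c(n+1)$ to be larger than this constant.

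The main obstacle is the apex of the naive cone $CM$, whose degree equals the number of vertices of $M$ and is therefore unbounded even under the hypothesis of $c(n)$-bounded local complexity. The stratified construction avoids ever coning a whole link at once; instead, each link is filled by a uniformly bounded pseudomanifold from the library, so every interior vertex of the resulting filling has degree controlled by a function of~$n$ alone. The only subtlety to check is that the successive excisions and regluings preserve the pseudomanifold property, which holds because at every stage one removes an open cone on a pseudomanifold and glues back a pseudomanifold with matching boundary.
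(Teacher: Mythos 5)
Your part~(1) is fine and is essentially the same Whitney-style argument the paper itself sketches just before the lemma. But note that the paper does not reprove part~(2) at all: it is quoted from Gromov's bounded-local-geometry filling theorem (\cite{gro95}, as cited immediately before the statement), so a citation is what is expected there. Your attempt to prove (2) from scratch by induction has a genuine gap, and it is located in the assembly step. The library elements $\Psi_v$ are \emph{$n$-dimensional} fillings of the $(n-1)$-dimensional links ${\rm Lk}(v)$. Excising the star of a vertex from the inner copy of~$M$ and gluing in $\Psi_v$ therefore only replaces that copy of~$M$ by another closed $n$-pseudomanifold; it never attaches any $(n+1)$-dimensional material beyond the prism $M\times[0,1]$. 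Consequently, after all your stages you still have a cobordism-like object whose inner boundary is a (modified) closed $n$-pseudomanifold, not a filling of~$M$: nothing in the construction makes ``$n$-dimensional boundary faces'' disappear, and the termination claim has no mechanism behind it. To make surgery on the inner boundary do anything, you would need (i) to realize each star replacement by an $(n+1)$-dimensional trace of bounded complexity (for instance, gluing the cone over the bounded-size complex ${\rm star}(v)\cup_{{\rm Lk}(v)}\Psi_v$ onto the top of the prism), which you never introduce, and, much more seriously, (ii) an argument that finitely many such moves reduce the boundary to something that bounds at bounded cost (the empty pseudomanifold, or one of bounded total size). Point (ii) is precisely the content of Gromov's theorem and is absent: replacing ${\rm star}(v)$ by $\Psi_v$ deletes $v$ but introduces the interior vertices of~$\Psi_v$, so no quantity is shown to decrease and the process has no reason to terminate.

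Two further points. First, open stars of adjacent vertices overlap, so ``remove the open star of each vertex and fill in with $\Psi_v$'' cannot be carried out simultaneously as stated; this is exactly why the proposition following the lemma in the paper passes to the dual (barycentric) decomposition and to the augmented links $Lk_+$ before performing any replacement. Second, the inductive hypothesis is not actually applicable as you invoke it: a degree bound $c(n)$ on~$M$ only yields vertex degrees at most $c(n)-1$ inside a link, not $c(n-1)$, so links are not ``trivially'' of $c(n-1)$-bounded local complexity. What saves this particular step is that each link has at most $c(n)$ vertices, hence boundedly many simplices and finitely many combinatorial types, so one may fix one filling per type (even the cone) without any induction — but then the induction carries no weight and you are back to needing an independent proof of (2). (Also, your base case is false for an odd number of points, though this never arises for the links you use.) In short, either quote Gromov for part~(2), as the paper does, or supply a construction that genuinely produces $(n+1)$-dimensional pieces together with a termination argument; the sketch as written does not.
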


\begin{remark}
Bounded local complexity could also be defined for cubical structures instead of simplicial structures.
Since every simplex can be decomposed into simplices and every cubical simplex can be decomposed into simplices, the two notions are equivalent.
Namely, an $n$-pseudomanifold~$M$ admits a triangulation with $c(n)$-bounded local simplicial complexity if and only if it admits a cubical structure with $c'(n)$-bounded local cubical complexity.
\end{remark}

\begin{proposition}
Let $P$ be a compact $(n+1)$-pseudomanifold whose boundary has $c(n)$-bounded local complexity.
Then there exists a compact $(n+1)$-pseudomanifold~$Q$ with the same boundary as~$P$ such that $Q$ has $c(n+1)$-bounded local complexity.

Moreover, if $P$ is embedded in a Banach space~$E$ then $Q$ is also embedded in~$E$ and can be chosen so that its Hausdorff distance to~$P$ is arbitrarily small.
\end{proposition}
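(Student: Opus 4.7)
The plan is to build $Q$ in three stages: first an abstract filling of $\partial P$ with bounded local complexity, then a geometric realization of that filling into a small neighbourhood of $P$ in $E$, and finally a general-position perturbation to turn the realization into an embedding.

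First, I apply Lemma~\ref{prop:even}(2) to the triangulated closed $n$-pseudomanifold $\partial P$, which has $c(n)$-bounded local complexity by hypothesis, to obtain a compact abstract $(n+1)$-pseudomanifold $Q_0$ with $\partial Q_0 = \partial P$ as triangulated complexes and $c(n+1)$-bounded local complexity. In order to have enough interior vertices to spread over all of $P$, I then iterate a subdivision procedure which preserves bounded local complexity (enlarging the constant once if necessary), refining $Q_0$ until the abstract diameter of every simplex drops below a prescribed small threshold; an edgewise subdivision of each cubical $(n+1)$-simplex of $Q_0$ into many small pieces of uniform shape provides such a scheme.

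Next, fix $\varepsilon > 0$ and define a PL map $\Phi : Q_0 \to E$ by induction on skeleta. On $\partial Q_0 = \partial P$, take $\Phi$ equal to the given embedding of $\partial P$ in $E$. Each interior vertex $v$ of $Q_0$ is sent to a point $\Phi(v) \in P \subset E$ chosen so that the images of all vertices of $Q_0$ form an $\varepsilon$-net in $P$, and any two vertices adjacent in $Q_0$ have images at distance at most $\varepsilon$ in $E$; the preliminary subdivision of $Q_0$ makes such an assignment possible, by using the vertex placements to approximate some fixed continuous map $Q_0 \to P$ that extends the identity on the boundary. I then extend $\Phi$ affinely over every simplex using the convex structure of the Banach space $E$. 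Since the local complexity of $Q_0$ is bounded by $c(n+1)$, each image simplex has diameter at most $(n+2)\varepsilon$, so $\Phi(Q_0)$ lies in the $(n+2)\varepsilon$-neighbourhood of $P$ and contains an $\varepsilon$-net of $P$, giving Hausdorff distance at most $(n+2)\varepsilon$.

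Finally, since $E$ is infinite-dimensional, a generic arbitrarily small perturbation of the images of the interior vertices of $Q_0$ turns the affine extension into a PL embedding by the standard general-position argument carried out inside a sufficiently large finite-dimensional affine subspace of $E$, with $\Phi|_{\partial Q_0}$ kept fixed throughout. Letting $\varepsilon$ shrink produces the desired $Q = \Phi(Q_0)$. The main obstacle will be the middle step: arranging that an abstract filling with a prescribed bounded local combinatorial structure can be painted onto the existing embedded filling $P$ simultaneously densely (to control the Hausdorff distance) and compatibly with adjacency (so that adjacent vertices end up close in $E$). This is exactly what the preliminary subdivision of $Q_0$ is designed to enable; without it, the fixed combinatorial structure supplied by Lemma~\ref{prop:even}(2) could be too coarse to spread finely enough over $P$ in $E$.
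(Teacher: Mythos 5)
Your first and third steps are fine, but the middle step hides the whole difficulty, and it cannot be repaired by subdivision. Once you have placed the vertices of (a subdivision of) $Q_0$ at points of $P$ so that adjacent vertices are $\varepsilon$-close in $E$ and boundary vertices are fixed, the affine extension is a \emph{continuous} map $Q_0 \to U_{C\varepsilon}(P)$ extending the inclusion $\partial P \hookrightarrow U_{C\varepsilon}(P)$. So your construction presupposes that the identity of $\partial P$ extends over the abstract filling $Q_0$ as a map into an arbitrarily small neighbourhood of $P$. Since $P$ is a compact polyhedron, hence an ANR, small neighbourhoods of $P$ in $E$ retract onto $P$, so this is equivalent to extending $\mathrm{id}_{\partial P}$ to a map $Q_0 \to P$. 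But $Q_0$ is produced by the absolute filling lemma (Lemma~\ref{lem:even}) applied to $\partial P$ alone, with no relation whatsoever to $P$, and such an extension is obstructed in general: take $n=1$, $\partial P = S^1$, $P$ a M\"obius band (the boundary circle is twice the generator of $\pi_1(P)$, so $\mathrm{id}_{\partial P}$ does not extend over a disc), and note that the lemma may perfectly well hand you $Q_0 = D^2$; or take $\partial P = T^2$, $P$ a solid torus and $Q_0$ the complementary solid torus, where the obstruction already lives in $H^1$. Fineness of the subdivision is irrelevant — the obstruction is global and homotopy-theoretic, not a matter of mesh — and the $\varepsilon$-net requirement is a further (minor) condition on top of an extension that need not exist. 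In effect, what you would need is a \emph{relative} version of the filling lemma (a bounded-complexity filling that maps to a neighbourhood of the given filling rel boundary), which is essentially the proposition itself, so the argument is circular at its key point.

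This is why the paper does not build a fresh filling of $\partial P$ at all: it surgers the given $P$ locally. One takes an $\epsilon$-fine triangulation of $P$, passes to the dual decomposition, and, by induction on the dimension of the (modified) links of faces, replaces each cone over a link by a bounded-local-complexity filling of that link supplied by Lemma~\ref{lem:even}, each replacement being carried out inside a small metric ball around the corresponding face (Remark~\ref{rem:even}). Because every modification is local and the pieces are reglued following the original incidence pattern, the result automatically is a pseudomanifold with the same boundary, stays Hausdorff-close to $P$, and inherits the universal complexity bound from the induction — precisely the global compatibility with $P$ that your transplant of an abstract $Q_0$ cannot guarantee. If you want to salvage your outline, the fix is to localize it, i.e.\ to apply the filling lemma to links of a fine triangulation of $P$ rather than to $\partial P$ as a whole, which brings you back to the paper's argument.
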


\begin{proof}
We proceed by induction. The case $n=1$ is obvious, so we can assume that the proposition is already established for all dimensions less than $n+1$.

Denote by $s_i$ the maximal number of $(n+1)$-simplices of~$P$ incident to an $i$-face of~$P$.
Now, assume that the triangulation of $P$ is very fine (but still has bounded local complexity), and proceed in a standard way.
First, we remove a small neighbourhood of each vertex $p \in P \setminus \partial P$ by truncating every $(n+1)$-simplex~$\Delta$ of~$P$ containing~$p$ by an $n$-plane separating~$p$ from the other vertices of~$\Delta$.
This deletes the possible singularity at~$p$.
Denote by~$P_0$ the resulting $(n+1)$-pseudomanifold with the decomposition into convex polytope cells obtained by truncating the polytope cells of~$P$.
The vertices of~$P_0$ not in~$\partial P$ lie in the interior of the edges of~$P$.
Thus, the number of $(n+1)$-cells of~$P_0$ around each vertex of~$P_0$ is bounded by~$s_1$.
We refer to the faces of~$P_0$ lying in a face of~$P$ of the same dimension as original faces and the others as new faces.
Since $P$ is an $(n+1)$-pseudomanifold, the new faces created by truncation around each vertex~$p$ form a closed $n$-pseudomanifold~$Lk(p)$ homeomorphic to the link of~$p$ in~$P$.
By construction,
\[
P_0 = P \setminus {\rm int} \left( \coprod_{p \in P \setminus \partial P} {\rm cone}(Lk(p)) \times \{p\} \right)
\]
where $p$ runs over the vertices of $P \setminus \partial P$.

Then, we remove a small neighborhood of each original edge~$e$ of~$P_0$ by truncating every $(n+1)$-cell~$\Delta$ of~$P_0$ containing~$e$ by an $n$-plane parallel to~$e$ separating~$e$ from the other vertices of~$\Delta$.
More generally, for every $i$ from~$1$ to~$n-2$, we remove by induction a small neighborhood of each original $i$-face~$f_i$ of~$P_{i-1}$ by truncating every $(n+1)$-cell~$\Delta$ of~$P_{i-1}$ containing~$f_i$ by an $n$-plane parallel to~$f_i$ separating~$f_i$ from the other vertices of~$\Delta$.
This deletes the possible singularities along~$f_i$.
Denote by~$P_i$ the resulting $(n+1)$-pseudomanifold with the decomposition into convex polytope cells obtained by truncating the polytope cells of~$P_{i-1}$.
The vertices of~$P_i$ not in~$\partial P$ lie in the interior of the $(i+1)$-faces of~$P$ and the number of~$(n+1)$-cells of~$P_i$ around each vertex of~$P_i$ is bounded by~$s_{i+1}$.
We refer to the faces of~$P_i$ lying in a face of~$P$ of the same dimension as original faces and the others as new faces.
As previously, the new faces created by truncation around each original $i$-face~$f_i$ of~$P_{i-1}$ form a closed $(n-i)$-pseudomanifold~$Lk(f_i)$ homeomorphic to the link of the $i$-face of~$P$ containing~$f_i$.
By construction,
\[
P_i = P_{i-1} \setminus {\rm int} \left( \coprod_{f_i \subseteq P_{i-1}} {\rm cone}(Lk(f_i)) \times f_i \right)
\]
where $f_i$ runs over the original $i$-faces of~$P_{i-1}$.

The number of $(n+1)$-cells of~$P'=P_{n-2}$ around each vertex of~$P'$ is bounded by~$s_{n-1}=2$ (recall that $P$ is a pseudomanifold).
Since all the $(n+1)$-cells of~$P'$ have the same combinatorial structure, we can triangulate them in a fix way.
As a result, the closed $(n+1)$-pseudomanifold~$P'$ has bounded local complexity.
The closed $(n-i)$-pseudomanifold~$Lk(f_i)$ has also bounded local complexity (after further splitting, to explain).
By induction, there exists a compact $(n-i+1)$-pseudomanifold $Lk(f_i)^+$ with bounded local complexity bounding $Lk(f_i)$.
By construction, the $(n+1)$-pseudomanifold~$P$ can be reconstructed from~$P'$ by gluing back the pieces ${\rm cone}(Lk(f_i)) \times f_i$, where $f_i$ is an original $i$-face of~$P_{i-1}$ and $i$ runs from $n-2$ to~$0$.
Instead of gluing back the pieces ${\rm cone}(Lk(f_i)) \times f_i$, we can glue the pieces $Lk(f_i)^+ \times f_i$ with the same boundary, following the same pattern.
This gives rise to a compact $(n+1)$-pseudomanifold~$Q$ with bounded local complexity with the same boundary as~$P$. 

Suppose that $P$ is embedded in a Banach space~$E$.
Since the triangulation of~$P$ is very fine, the piece $Lk(f_i) \times f_i$ removed from~$P_{i-1}$ is contained in a small metric ball~$B$ in the Banach space~$E$.
By projection onto~$B$, we can assume that the filling $Lk(f_i)^+ \times f_i$ is also contained in~$B$.
Thus, the compact $(n+1)$-pseudomanifold~$Q$ lies within an arbitrarily small distance from~$P$ in~$E$.
This finishes the proof of the proposition.
\end{proof}

This lemma has the following immediate corollary.

\begin{proposition} \label{prop:complexity}
Let $M$ be a closed $n$-Riemannian manifold embedded into $L^\infty(M)$ by the Kuratowski embedding. 
Then, for each $\epsilon>0$, the manifold~$M$ bounds
a compact $(n+1)$-pseudomanifold~$P$ with $c(n+1)$-bounded local complexity that is contained in the $(\FillRad(M)+\epsilon)$-neighbourhood of~$M$.
\end{proposition}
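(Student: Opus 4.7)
The plan is to combine the two already-proved ingredients: Lemma~\ref{prop:even}, which gives triangulations of smooth manifolds with bounded local complexity, and the immediately preceding proposition, which says that one can replace a filling with bounded-complexity boundary by a nearby filling with bounded local complexity. The whole argument is essentially a concatenation of these facts, together with an $\epsilon/2$-type splitting.

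First, I would fix the Kuratowski embedding $i:M\hookrightarrow L^\infty(M)$ and choose, by Lemma~\ref{prop:even}(1), a smooth triangulation of $M$ with $c(n)$-bounded local complexity. Next, by the very definition of the filling radius, for any $\epsilon>0$ there is a compact $(n+1)$-pseudomanifold $P_1\subseteq L^\infty(M)$ with $\partial P_1=M$ and $P_1\subseteq U_{\FillRad(M)+\epsilon/2}(M)$. After a small simplicial subdivision and perturbation, we may assume $P_1$ is a triangulated compact $(n+1)$-pseudomanifold whose boundary carries the prescribed $c(n)$-bounded triangulation of $M$ (this does not affect the enclosing neighbourhood since any displacement can be made much smaller than $\epsilon/2$).

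Now I apply the preceding proposition to $P_1$. It produces a compact $(n+1)$-pseudomanifold $Q$ with the same boundary as $P_1$ (so $\partial Q=M$), having $c(n+1)$-bounded local complexity, and lying arbitrarily close to $P_1$ in Hausdorff distance inside the ambient Banach space $L^\infty(M)$. Choosing this Hausdorff distance to be less than $\epsilon/2$ guarantees
\[
Q\;\subseteq\; U_{\epsilon/2}(P_1)\;\subseteq\; U_{\FillRad(M)+\epsilon}(M),
\]
so setting $P:=Q$ gives the desired filling. I do not expect a genuine obstacle here: everything is a book-keeping assembly of the previous statements. The only point that deserves care is checking that the fine subdivision of $P_1$ in the first step, required so that its boundary triangulation matches a $c(n)$-bounded one on $M$, can be performed without destroying the ability to apply the preceding proposition; this is standard since subdividing simplices preserves the pseudomanifold structure and only affects (in a controlled way) the local complexity of the boundary triangulation, which is the input — not the output — of the proposition.
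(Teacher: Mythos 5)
Your proposal is correct and matches the paper's own argument, which treats the statement as an immediate corollary: triangulate $M$ with $\kappa_n$-bounded local complexity, take a filling $P_1$ inside the $(\FillRad(M)+\epsilon/2)$-neighbourhood provided by the definition of the filling radius, and apply the preceding proposition to replace $P_1$ by a bounded-local-complexity filling within Hausdorff distance $\epsilon/2$. No further comment is needed.
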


\begin{proof}
Endow $M$ with a triangulation with $c(n)$-bounded local complexity.
The definition of the filling radius implies that $M$ is the boundary of a compact  $(n+1)$-pseudomanifold~$P_1$ contained in its $(\FillRad(M)+{\epsilon\over 2})$-neighbourhood. 
By Lemma~\ref{lem:M'}, we can replace~$P_1$ by another compact  $(n+1)$-pseudomanifold~$P$ that sits in an ${\epsilon\over 2}$-neighbourhood of $P_1$, is bounded
by~$M$ and has $c(n+1)$-bounded local complexity.
\end{proof}

\begin{remark}
Proposition~\ref{prop:complexity} yields a control on the local complexity of the simplicial/cubical structure of the filling, but it does not provide any control on the total number of $n$-simplices/cubical $n$-simplices of the filling (which may be arbitrarily large).
\end{remark}

\forgotten

\section{Filling radius and modified homology $1$-waist} \label{sec:FR1bis}

The goal of this section is to present an extension of Theorem~\ref{theo:FR1} where the filling radius is bounded from below in terms of a modified homology $1$-waist more suited for applications in the calculus of variations from a geometric measure theory point of view.

\medskip

As a preliminary, we are interested in triangulated compact $n$-pseudomanifolds~$M$ with or without boundary such that every vertex is incident to at most $\kappa_n$ edges for some very large constants $\kappa_n$ depending only on~$n$. 
In this case, we say that $M$ has \emph{$\kappa_n$-bounded local complexity}, or \emph{bounded local complexity}. 

\medskip

For example, every smooth manifold can be triangulated with bounded local complexity. This follows, for example, from H.~Whitney's
proof of the existence of smooth triangulations of manifolds. 
First, embed a manifold into a high-dimensional Euclidean space and consider a subdivision of the Euclidean space into very small cubes. 
Making  a generic shift of this triangulation, one ensures that the manifold intersects each face of each cube transversely. 
If the cubes are small enough, the intersections of the manifold with the cubes are very close to the intersections with an $n$-plane and, therefore, can be subdivided in a locally bounded number of simplices.

\medskip

Gromov proved in~\cite[{\S $5{\frac{5}{7}}$.II'}]{gro95} that every triangulated closed $n$-pseudomanifold such that each vertex is incident to at most~$\kappa$ simplices can be filled by a compact $(n+1)$-pseudomanifold such that each vertex is incident to at most $f(\kappa,n)$ edges for some function~$f$. 
In other words, a closed pseudomanifold with locally bounded complexity is the boundary of a compact pseudomanifold with boundary with locally bounded complexity.

\medskip

The above discussion can be summarized by the following lemma.

\begin{lemma} \label{lem:even} 
There exist positive integer numbers $\kappa_1<\kappa_2<\ldots $ such that
\begin{enumerate}
\item every smooth compact $n$-manifold can be triangulated with $\kappa_n$-bounded local complexity;\label{even1}
\item every closed $n$-pseudomanifold with $\kappa_n$-bounded local complexity bounds a compact $(n+1)$-pseudomanifold with $\kappa_{n+1}$-bounded local complexity. \label{even2}
\end{enumerate}
\end{lemma}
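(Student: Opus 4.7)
The proof splits naturally along the two assertions.

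For assertion~(\ref{even1}), my plan is to follow Whitney's classical smooth triangulation strategy. First, smoothly embed $M^n$ into some Euclidean space~$\R^N$ with $N$ depending only on~$n$. Endow $\R^N$ with a cubical grid of small mesh size and, after a generic translation of the grid, arrange that $M$ meets every $k$-face of the grid transversely for all~$k$. Each cube of the grid then meets $M$ in a smooth piece whose combinatorial type belongs to a finite list depending only on~$n$ and~$N$: the intersection with the cube boundary is a stratified set of controlled type, and the $n$-dimensional piece inside the cube is a disk glued over it. A recursive triangulation of these pieces---triangulating the intersection with each codimension-$k$ face first, then coning the interiors---produces a simplicial structure on $M$ in which the number of edges meeting any vertex is bounded by a constant~$\kappa_n$ depending only on~$n$.

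For assertion~(\ref{even2}), I would argue by induction on~$n$. The base case $n=1$ is immediate, since a closed $1$-pseudomanifold with bounded complexity is a disjoint union of circles, each bounding a disk. For the inductive step, suppose the conclusion holds in all dimensions up to~$n-1$, and let $M$ be a closed $n$-pseudomanifold with $\kappa_n$-bounded local complexity. For each vertex $v\in M$, the link $\mathrm{Lk}(v)$ is a closed $(n-1)$-pseudomanifold whose local complexity is controlled by a function of~$\kappa_n$, so by the induction hypothesis it bounds a compact $n$-pseudomanifold~$L_v^+$ of bounded local complexity. I then construct the filling of $M$ by processing faces in order of increasing dimension. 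Start from $M\times[0,1]$; at the top copy $M\times\{1\}$, replace each closed vertex star with $\Cone(L_v^+)$, glued along $\mathrm{Lk}(v)$. This removes the vertex singularities on the top face. Next process each original edge $e$: its link in the current complex is a closed $(n-2)$-pseudomanifold of bounded complexity, hence bounds an $(n-1)$-pseudomanifold~$L_e^+$ of bounded complexity by induction, and we glue in the product region $L_e^+\times e$. Iterating through faces of all dimensions produces a compact $(n+1)$-pseudomanifold~$W$ with $\partial W=M$ and bounded local complexity, where the bound $\kappa_{n+1}$ depends only on $\kappa_n$ and~$n$.

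The main obstacle is controlling the local complexity uniformly through this inductive filling. At each layer of the cap-off, one must check that the links of the newly introduced faces are themselves of complexity bounded in terms of~$\kappa_n$ alone, so that the next induction step applies and only a universal overhead is added. Keeping this bookkeeping uniform---so that $\kappa_{n+1}$ depends only on~$\kappa_n$ and~$n$, and not on~$M$ itself---is the delicate technical point; once it is established, all multiplicative overheads from the iteration can be absorbed into a single dimensional constant. Gromov's argument in~\cite[{\S $5{\frac{5}{7}}$.II'}]{gro95} carries out precisely this bookkeeping, and the plan is essentially to transcribe it carefully in the present setting.
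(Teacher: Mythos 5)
For part~(\ref{even1}) your Whitney-grid argument is exactly what the paper does (the paper only sketches it, in essentially the same words), and for part~(\ref{even2}) the paper gives no proof at all: it simply quotes Gromov~\cite[{\S $5{\frac{5}{7}}$.II'}]{gro95} for the statement that a closed pseudomanifold of bounded local complexity bounds a compact pseudomanifold of bounded local complexity. So insofar as you ultimately defer to Gromov, you and the paper agree. The difficulty is the construction you interpose as a proof sketch of~(\ref{even2}): it has a structural gap that no amount of complexity bookkeeping repairs. Gluing $\Cone(L_v^+)$ to $M\times\{1\}$ along $\mathrm{St}(v)\times\{1\}\cong\Cone(\mathrm{Lk}(v))$ does not cap anything off; it merely replaces the piece $\mathrm{St}(v)\times\{1\}$ of the top boundary by the filling $L_v^+$ (and, incidentally, closed vertex stars overlap in top-dimensional sets, so these gluings cannot even be performed independently unless you pass to dual cells). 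After you process the strata of all dimensions, the boundary of your complex is $M\sqcup M'$, where $M'$ is a retriangulated, partially desingularized copy of the top face --- not $M$ alone. Test the scheme on a closed surface with bounded vertex degrees: every step keeps the top a closed surface, so the procedure never terminates with $\partial W=M$.

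The real content of Gromov's result is precisely the step your sketch omits: eliminating the top copy. In the pseudomanifold category the only cheap filling is the cone, whose apex has unbounded complexity and whose link is $M$ itself, so the natural ``truncate the cone and refill its links by induction on dimension'' is circular; Gromov's argument is a substitute for that global coning, not merely bookkeeping of link complexities. Note also that the paper's own detailed stratum-by-stratum construction (the proposition following the lemma) is not a proof of the lemma: it starts from an already existing $(n+1)$-dimensional filling $P$, which supplies the global combinatorial pattern and keeps all links of dimension at most $n$, and it uses the lemma as an input. If you intend part~(\ref{even2}) to rest on the citation, as the paper does, say so and drop the intermediate construction; if you intend to reprove it, the argument must contain a mechanism (such as Gromov's) for closing up the boundary with spread-out, bounded-complexity cone-like pieces, which the proposal currently lacks.
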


\begin{remark} \label{rem:even}
If a closed pseudomanifold~$P$ with bounded local complexity is contained in a (small) metric ball~$B$ of a Banach space, we can assume that the filling of~$P$ with bounded local complexity given by Lemma~\ref{lem:even}.\eqref{even2} also lies in~$B$ after projection.
\end{remark}

\begin{remark}
Bounded local complexity could also be defined for cubical structures instead of simplicial structures.
Since every simplex can be decomposed into cubes and every cubical simplex can be decomposed into simplices, the two notions are equivalent.
Namely, an $n$-pseudomanifold~$M$ admits a triangulation with $\kappa_n$-bounded local simplicial complexity if and only if it admits a cubical structure with $\kappa'_n$-bounded local cubical complexity.
\end{remark}

The following result implies that the filling with bounded local complexity given by Lemma~\ref{lem:even}.\eqref{even2} can be chosen arbitrarily close to any given filling.

\begin{proposition}
Let $P$ be a compact $(n+1)$-pseudomanifold in a Banach space~$E$, whose boundary~$\partial P$ has $\kappa_n$-bounded local complexity.
Then there exists a compact $(n+1)$-pseudomanifold~$P'$ in~$E$ with the same boundary as~$P$ such that $P'$ has $\kappa_{n+1}$-bounded local complexity and is at arbitrarily small Hausdorff distance to~$P$.
\end{proposition}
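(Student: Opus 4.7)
The plan is to realize the filling $P'$ by a layer-by-layer truncation of $P$ whose output is then reassembled using fillings provided by induction on~$n$. We proceed by induction on the dimension, the base case being trivial. Assume the result has been established in all dimensions less than $n+1$.

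Take a very fine triangulation of~$P$ extending the given triangulation of~$\partial P$, so that every closed $(n+1)$-simplex has diameter less than a parameter $\eta>0$ to be chosen at the end. This triangulation need not yet have bounded local complexity in the interior. Now remove, in order of increasing $i=0,1,\dots,n-1$, an open tubular neighborhood of each open original $i$-face $f_i \subset P \setminus \partial P$, by truncating each incident $(n+1)$-simplex along an $n$-plane parallel to $f_i$ that separates~$f_i$ from the opposite face. The piece removed at step~$i$ around $f_i$ is PL-homeomorphic to $\mathrm{cone}(L_i)\times f_i$, where $L_i$ is a closed $(n-i)$-pseudomanifold isomorphic to the link of~$f_i$ in the current pseudomanifold (updated to reflect the previous truncations). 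After the $(n-1)$-st step, every vertex of the remaining pseudomanifold lies either in the interior of an original top-dimensional simplex or on~$\partial P$, and the top-dimensional cells all share a fixed combinatorial type (a permutohedron) that may be subdivided by a fixed scheme into a triangulation with $\kappa_{n+1}$-bounded local complexity.

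The key observation is that each link~$L_i$ that appears in this process inherits from the truncation scheme a cellular structure which, after the same fixed subdivision of its cells, has $\kappa_{n-i}$-bounded local complexity. By the induction hypothesis applied inside the Banach space~$E$ together with Remark~\ref{rem:even}, we may therefore fill each~$L_i$ by a compact $(n-i+1)$-pseudomanifold $L_i^+ \subseteq E$ of bounded local complexity lying arbitrarily close to the removed tube. Replacing each removed piece $\mathrm{cone}(L_i)\times f_i$ by $L_i^+\times f_i$, which has the same boundary $L_i \times f_i$, and gluing these in place of the cones yields a compact $(n+1)$-pseudomanifold~$P'$ with $\partial P' = \partial P$ and $\kappa_{n+1}$-bounded local complexity by Lemma~\ref{lem:even}.

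The Hausdorff distance between~$P'$ and~$P$ is controlled by the largest diameter among the removed tubes together with the enclosing balls of their replacements, each of which is $O(\eta)$; choosing~$\eta$ small enough delivers the proximity claim. The main technical point—and the place where care is needed—is the consistency of the induction: after each truncation step one must check that the emerging modified link is indeed a closed pseudomanifold of the predicted dimension with bounded local complexity, and that the successive fillings $L_i^+ \times f_i$ glue together along matching product boundaries with the fillings produced at earlier stages, so that the reassembled object is a genuine $(n+1)$-pseudomanifold rather than a more singular complex.
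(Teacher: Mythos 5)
There is a genuine gap, and it sits exactly at what you call the ``key observation''. The links that your scheme needs to fill are, for the low-dimensional original faces, essentially the \emph{original} links of those faces in the fine triangulation of~$P$ (truncating earlier faces and subdividing the cut cells by a fixed scheme does not change the number of cells meeting at a vertex of the link). These links do \emph{not} have bounded local complexity in general: for an interior vertex~$p$, the copy of~$Lk(p)$ bounding the removed tube has exactly the local complexity of the star of~$p$, which is the unconstrained quantity the proposition is trying to eliminate (an interior vertex may be incident to arbitrarily many edges). Only the links of faces of codimension~$1$ and~$2$ in~$P$, which are $0$- and $1$-dimensional, are automatically of bounded complexity (a $1$-dimensional link of an $(n-1)$-face has vertex degrees at most~$2$ because $P$ is a pseudomanifold). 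So the step ``fill each $L_i$ by a bounded-complexity $L_i^+$'' is unavailable precisely where you need it first, namely at vertices and other low-dimensional faces, and neither the inductive hypothesis nor Lemma~\ref{lem:even}.\eqref{even2} with Remark~\ref{rem:even} applies, since both require the closed pseudomanifold being filled to already have bounded local complexity.

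The missing idea, which is the engine of the paper's argument, is to process the faces in order of \emph{decreasing} dimension and to fill, at each stage, not the original link but a \emph{modified} link assembled from the bounded-complexity fillings already installed at the incident higher-dimensional faces (in the paper this is the operation $\Phi$, built from the recursion $Lk(\Delta^{n-i-1})=\bigcup\,{\rm cone}(Lk_+(\Delta^{n-i}))$ on the dual/barycentric decomposition, with each cone replaced by the nearby bounded-complexity filling of Lemma~\ref{lem:even}.\eqref{even2}). This modified link has bounded local complexity by construction even though the original link does not, so the filling lemma applies, and it simultaneously resolves the gluing problem you flag at the end but do not resolve: since the replacement at a lower-dimensional face is by definition a filling of the union of the replacements already placed at the adjacent higher-dimensional faces, the pieces match along their product boundaries automatically (note also that for $i\geq 1$ the boundary of $L_i^+\times f_i$ is not just $L_i\times f_i$ but also contains $L_i^+\times\partial f_i$, so with your ordering the abstractly chosen filling at a vertex has no reason to contain the product ends required by the later replacements at edges through that vertex). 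As written, with the original links filled and the vertices treated first, the construction does not go through.
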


\begin{proof}
First, we are going to sketch the general idea of the proof. 
We proceed in the following way.
We remove small neighbourhoods of all vertices, creating for each vertex $p$ a ``hole" bounded by the pseudomanifold $Lk(p)$, where $Lk(p)$ denotes the link of~$p$.
Then we remove small neighbourhoods of all edges $e$ creating ``holes" $Lk(e)\times e$, and so on. 
At the end, each $n$-simplex will be truncated to a certain polytope (called permutohedron).
We can triangulate this polytope into a certain fixed way. The resulting polyhedron will automatically have bounded local complexity.
Then we reconstruct a filling $P'$ of~$\partial P$ starting from faces with the lowest codimension and going up. At each stage, we fill each link by the bounded local
complexity filling provided by Lemma~\ref{lem:even}. As all our surgeries can be done arbitrarily closely to the original filling, our construction
almost does not affect the distance from the filling $P'$ to $\partial P$.

The actual proof goes as follows.
Without loss of generality, we can assume that $P$ is piecewise linear and that its triangulation~$\mathcal{T}$ is $\epsilon$-fine (but still has bounded local complexity).
Consider the dual polyhedral decomposition~$\mathcal{P}$ of the triangulation~$\mathcal{T}$ of~$P$.
By definition, the dual polyhedral decomposition~$\mathcal{P}$ is formed of the closed stars of the vertices of~$\mathcal{T}$ in the first barycentric subdivision~$\mathcal{T}'$ of~$\mathcal{T}$.

Let $\Delta^{n-i}$ be an $(n-i)$-face of~$\mathcal{T}$ with $i \in \{0,\dots,n\}$.
Since $P$ is a compact $(n+1)$-pseudomanifold with boundary, the link~$Lk(\Delta^{n-i})$ of~$\Delta^{n-i}$ in~$\mathcal{T}'$ is a compact $i$-pseudomanifold\footnote{In the proof of Proposition~\ref{prop:complexity}, we relax the usual definition of a pseudomanifold to a finite disjoint union of pseudomanifolds allowing a pseudomanifold to be non-connected.} with boundary if $\Delta^{n-i}$ lies in~$\partial P$ and without boundary otherwise.
Define also the closed $i$-pseudomanifold
\begin{equation} \label{eq:Lk+}
Lk_+(\Delta^{n-i}) = Lk(\Delta^{n-i}) \cup {\rm cone}(\partial Lk(\Delta^{n-i}))
\end{equation}
where ${\rm cone}(\partial Lk(\Delta^{n-i}))$ is the cone over~$\partial Lk(\Delta^{n-i})$ arising from the center~$\omega(\Delta^{n-i})$ of~$\Delta^{n-i}$.
Note that $Lk_+(\Delta^{n-i}) = Lk(\Delta^{n-i})$ if $\Delta^{n-i}$ does not lie in~$\partial P$, and that $Lk_+(\Delta^{n-i}) \subseteq \partial P$ if $\Delta^{n-i}$ lies in~$\partial P$.


By construction,
\begin{equation} \label{eq:Lk}
Lk(\Delta^{n-i}) = \bigcup_{\Delta^{n-i} \subseteq \Delta^{n-i+1}} {\rm cone}(Lk_+(\Delta^{n-i+1}))
\end{equation}
where the union is over all the $(n-i+1)$-simplices~$\Delta^{n-i+1}$ of~$\mathcal{T}$ containing~$\Delta^{n-i}$ and 
\[
{\rm cone}(Lk_+(\Delta^{n-i+1})) \text{ is the cone over } Lk_+(\Delta^{n-i+1}) \text{ arising from } \omega(\Delta^{n-i+1}).
\]
Note that
\[
{\rm cone}(Lk(\{p\})) = {\rm Star}(p)
\]
for every vertex~$p$ of~$\mathcal{T}$.
Furthermore, ${\rm Star}(p)$ and ${\rm Star}(q)$ intersect each other if and only if $p$ and~$q$ are adjacent vertices of~$\mathcal{T}$.
In this case,
\begin{equation} \label{eq:stars}
{\rm Star}(p) \cap {\rm Star}(q) = {\rm cone}(Lk_+([p,q])).
\end{equation}

Since the pseudomanifold~$P$ is formed of the stars ${\rm Star}(p)$, it can be reconstructed from the augmented $1$-dimensional links~$Lk_+(\Delta^{n-1})$ and the centers of the faces of~$\mathcal{T}$ by following the pattern~\eqref{eq:Lk} and the relation~\eqref{eq:Lk+}. 
This has to be done iteratively for $i$ equals~$1$ to~$n$ until $Lk(\{p\})$ and eventually ${\rm cone}(Lk(\{p\}))$ are reconstructed.
Observe that the boundary~$\partial P$ of~$P$ is formed of the union of all the cones occurring in~\eqref{eq:Lk+}.
That is,
\[
\partial P = \bigcup_{p \text{ vertex of } \mathcal{T}} {\rm cone}(\partial Lk(\{p\}))
\]
where the union is over all vertices~$p$ of~$\mathcal{T}$ (lying in~$\partial P$).

\medskip

Now, we want to construct a different filling~$P'$ with bounded local complexity following a similar (re)-construction procedure.
Specifically, we want to define a compact $i$-pseudomanifold $\Phi(Lk(\Delta^{n-i}))$ with bounded local complexity lying at distance $\lesssim \epsilon$ from a vertex of~$\mathcal{T}$ for every $(n-i)$-face~$\Delta^{n-i}$ of~$\mathcal{T}$.
Moreover, we require that $\partial \Phi(Lk(\Delta^{n-i})) = \partial Lk(\Delta^{n-i})$ whenever $\Delta^{n-i}$ lies in~$\partial P$.
For $i=1$, let $\Phi(Lk(\Delta^{n-1})) = Lk(\Delta^{n-1})$.
As a union of $1$-pseudomanifolds, $Lk(\Delta^{n-1})$ has $2$-bounded local complexity and lies at distance at most~$\epsilon$ from~$\omega(\Delta^{n-1})$.

Suppose that $\Phi(Lk(\Delta^{n-i}))$ is defined for every $(n-i)$-face~$\Delta^{n-i}$ of~$\mathcal{T}$.
By similarity with~\eqref{eq:Lk+}, define 
\[
\Phi(Lk_+(\Delta^{n-i})) = \Phi(Lk(\Delta^{n-i})) \cup {\rm cone}(\partial \Phi(Lk(\Delta^{n-i}))).
\]
Recall that $\partial \Phi(Lk(\Delta^{n-i})) = \partial Lk(\Delta^{n-i})$ is empty if $\Delta^{n-i}$ does not lie in~$\partial P$ and is contained in~$\partial P$ otherwise.
Thus, $\Phi(Lk_+(\Delta^{n-i}))$ is a closed $i$-pseudomanifold with bounded local complexity.

Now, define $\Phi({\rm cone}(Lk_+(\Delta^{n-i})))$ as the polyhedral pseudomanifold filling of~$\Phi(Lk_+(\Delta^{n-i}))$ with bounded local complexity given by Lemma~\ref{lem:even}.\eqref{even2}.
This filling lies within distance~$\lesssim \epsilon$ from a vertex of~$\mathcal{T}$; see Remark~\ref{rem:even}.
Finally, by similarity with~\eqref{eq:Lk}, let
\[
\Phi(Lk(\Delta^{n-i-1})) = \bigcup_{\Delta^{n-i-1} \subseteq \Delta^{n-i}} \Phi({\rm cone}(Lk_+(\Delta^{n-i}))).
\]
Note that $\Phi(Lk(\Delta^{n-i-1}))$ also lies within distance~$\lesssim \epsilon$ from a vertex of~$\mathcal{T}$.

As in~\eqref{eq:stars}, $\Phi({\rm Star}(p))$ and $\Phi({\rm Star}(q))$ intersect each other if and only if $p$ and~$q$ are adjacent vertices of~$\mathcal{T}$.
In this case, their intersection
\[
\Phi({\rm Star}(p)) \cap \Phi({\rm Star}(q)) = \Phi({\rm cone}(Lk_+([p,q])))
\]
is a compact $n$-pseudomanifold.
This implies that the union 
\[
P' = \bigcup_{p \text{ vertex of } \mathcal{T}} \Phi({\rm Star}(p))
\]
over all vertices of~$\mathcal{T}$ is a compact $(n+1)$-pseudomanifold with bounded local complexity lying at distance~$\lesssim \epsilon$ from the vertex set of~$\mathcal{T}$ and so from~$P$.
Furthermore,
\[
\partial P' = \bigcup_{p \text{ vertex of } \mathcal{T}} {\rm cone}(\partial \Phi(Lk(\{p\}))) = \bigcup_{p \text{ vertex of } \mathcal{T}} {\rm cone}(\partial Lk(\{p\})) = \partial P
\]
as required.
\end{proof}

This proposition has the following immediate corollary.

\begin{proposition} \label{prop:complexity}
Let $M$ be a closed Riemannian $n$-manifold embedded into $L^\infty(M)$ by the Kuratowski embedding. 
Then, for every $\epsilon>0$, the manifold~$M$ bounds
a compact $(n+1)$-pseudomanifold~$P$ with $\kappa_{n+1}$-bounded local complexity that is contained in the $(\FillRad(M)+\epsilon)$-neighbourhood of~$M$.
\end{proposition}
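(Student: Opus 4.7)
The plan is to combine the definition of the filling radius with the preceding proposition, which allows one to replace any given filling with a nearby filling having bounded local complexity.

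First, I would endow $M$ with a smooth triangulation of $\kappa_n$-bounded local complexity, whose existence is guaranteed by Lemma~\ref{lem:even}.\eqref{even1}. In particular, when viewed under the Kuratowski embedding $M \hookrightarrow L^\infty(M)$, this provides a triangulated closed $n$-pseudomanifold satisfying the hypothesis of the preceding proposition.

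Next, I would invoke the definition of the filling radius. Since $\FR(M) + \epsilon/2 > \FR(M)$, the fundamental class $[M]$ vanishes in $H_n(U_{\FR(M) + \epsilon/2}(M))$, and hence there exists a compact $(n+1)$-pseudomanifold $P_1 \subseteq L^\infty(M)$ with boundary $\partial P_1 = M$ contained in the $(\FR(M) + \epsilon/2)$-neighbourhood of~$M$. At this stage, $P_1$ need not have any local complexity control: the issue is that $P_1$ arises merely from a chain-level filling and could have arbitrarily bad combinatorics.

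Finally, I would apply the preceding proposition to the pair $(P_1, \partial P_1 = M)$ sitting inside the Banach space $E = L^\infty(M)$. This yields a compact $(n+1)$-pseudomanifold $P \subseteq L^\infty(M)$ with the same boundary $\partial P = M$, with $\kappa_{n+1}$-bounded local complexity, and at Hausdorff distance less than $\epsilon/2$ from $P_1$. Since $P_1$ lies in the $(\FR(M) + \epsilon/2)$-neighbourhood of $M$, the triangle inequality gives
\[
P \subseteq U_{\FR(M) + \epsilon}(M),
\]
which is the desired conclusion. The only step that requires genuine work, the construction of a bounded-complexity filling close to an arbitrary one, has already been carried out in the preceding proposition; the present statement is a direct application, where the main point is simply to match the constants $\epsilon/2 + \epsilon/2 = \epsilon$.
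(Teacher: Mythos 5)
Your proposal is correct and follows essentially the same route as the paper: fix a $\kappa_n$-bounded triangulation of~$M$, take a filling~$P_1$ inside the $(\FR(M)+\epsilon/2)$-neighbourhood provided by the definition of the filling radius, and replace it by a bounded-complexity filling at Hausdorff distance at most~$\epsilon/2$ using the preceding proposition, concluding by the triangle inequality. Nothing further is needed.
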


\begin{remark}
Proposition~\ref{prop:complexity} yields a control on the local complexity of the simplicial/cubical structure of the filling, but it does not provide any control on the total number of $n$-simplices/cubical $n$-simplices of the filling (which may be arbitrarily large).
\end{remark}

Now, we can derive the following filling radius estimate extending Theorem~\ref{theo:FR1}.
See~\eqref{eq:defWtilde} for the definition of~$\w'_1(M)$.

\begin{theorem} \label{theo:FR1bis}
Let $M$ be a closed $n$-manifold.
Then every Riemannian metric on~$M$ satisfies 
\[
\FR(M) \geq c_n \, \w'_1(M)
\]
for $c_n = \frac{1}{2^{2n-1} (n+1)! \, \kappa_n}$.
\end{theorem}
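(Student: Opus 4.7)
The plan is to refine the construction used in the proof of Theorem~\ref{theo:FR1} so as to produce a sweepout meeting the stronger requirements of Definition~\ref{def:Wtilde}, namely a continuous family of $1$-cycles parametrized by $S^{n-1}$, continuous in both the flat topology and the weak varifold topology. Arguing by contradiction, assume that $\FR(M) < c_n \w'_1(M)$ for $c_n = \frac{1}{2^{2n-1}(n+1)!\,\kappa_n}$, and fix $\nu$ slightly larger than $\FR(M)$. The first new ingredient is to apply Proposition~\ref{prop:complexity} to obtain an $(n+1)$-pseudomanifold filling $P \subseteq U_\nu(M)$ of $M$ enjoying $\kappa_{n+1}$-bounded local complexity. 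The bounded complexity is the crucial input: it yields a dimensional bound on the number of $(n{+}1)$-cells of $P$ incident to any single vertex, which in turn controls the combinatorial size of each fiber of the sweepout and the number of its connected components in a uniform way.

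Next, repeat verbatim the construction from Section~\ref{sec:FR1}. Subdivide $P$ finely (preserving bounded local complexity), define $f:P^0 \to M$ by sending each vertex to a nearest point of $M$, extend $f$ to $P^1$ by minimizing geodesics, compose with the cubical ``retraction'' $\rho$ of Section~\ref{sec:simplex} to obtain $\bar f:Q \cup \partial P \to M$, and glue the pieces $Y^n$ and $X_2^n$ to assemble the closed $n$-pseudomanifold $N$ with its polyhedral $1$-sweepout $h:N \to T$. Each fiber of $h$ is the $1$-skeleton of a cube of dimension at most $n+1$, so the $\bar f$-image of any fiber has total length at most $(n+1)\,2^n\,\delta$ with $\delta \lesssim \nu$, and $\bar f_*[N] = [M]$.

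The genuinely new step is to upgrade the polyhedral slicing $(\varphi,h)$ into a continuous family of $1$-cycles parametrized by $S^{n-1}$. Following the idea of Example~\ref{ex:fish2}, at each point of $T$ where $h^{-1}(t)$ is a chain rather than a cycle (the singular strata of $T$), double the offending arcs and cap each resulting digon by a concentric-loop contracting homotopy; this enlarges $N$ slightly and at worst doubles the fiber lengths, yielding a modified target polyhedron $\tilde T$ whose fibers are genuine $1$-cycles. Then construct a continuous finite-to-one map $\psi:\tilde T \to S^{n-1}$ carefully chosen so that the induced map $S^{n-1} \to \mathcal{Z}_1(N;G)$, sending each $u$ to the $1$-cycle $\bigcup_{t \in \psi^{-1}(u)} \tilde h^{-1}(t)$, is continuous in both the flat topology of cycles and the weak topology of varifolds. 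Such a $\psi$ exists by a combinatorial argument on the $(n{-}1)$-skeleton of $\tilde T$, with the degree of the map bounded by a quantity involving $\kappa_n$ (the bound on the number of $1$-cells of $P$ incident to any vertex controls how many sheets can collide at any ``hub'' of $\tilde T$). By construction, the resulting continuous family represents $[M]$ under the Almgren isomorphism $\pi_{n-1}(\mathcal{Z}_1(N;G)) \simeq H_n(M;G)$, since this reduces to the identity $\bar f_*[N] = [M]$ already established.

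Plugging in the combinatorial bounds yields that, for each $u \in S^{n-1}$, every connected component of the image under $\bar f$ has length at most $2^{2n-1}(n+1)!\,\kappa_n\,\delta$; this quantity is, by the standing assumption, strictly less than $\w'_1(M)$, contradicting Definition~\ref{def:Wtilde}. The main obstacle will be the third paragraph: guaranteeing that the finite-to-one map $\psi:\tilde T \to S^{n-1}$ can be chosen so that the induced family of cycles is continuous in the varifold topology and not only in the flat topology. As the ``bad'' map described at the end of Example~\ref{ex:fish2} illustrates, naive choices of $\psi$ introduce varifold discontinuities at the hubs where many sheets collide; controlling this forces the refinement underlying the explicit factors $2^{2n-1}(n+1)!\,\kappa_n$ in the constant $c_n$.
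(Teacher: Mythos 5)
Your overall scheme (contradiction, a filling $P\subseteq U_\nu(M)$ with $\kappa$-bounded local complexity via Proposition~\ref{prop:complexity}, the construction of $N$, $h:N\to T$ and $\bar f$ from Section~\ref{sec:FR1}, and the use of bounded complexity to control how many sheets can collide) matches the paper's setup. But the step that actually carries the theorem is missing: you never construct the continuous $S^{n-1}$-family of $1$-cycles. You assert that after doubling arcs \`a la Example~\ref{ex:fish2} there exists a finite-to-one map $\psi:\tilde T\to S^{n-1}$ making $u\mapsto\bigcup_{t\in\psi^{-1}(u)}\tilde h^{-1}(t)$ continuous in both the flat and the weak varifold topologies, ``by a combinatorial argument,'' and then you yourself flag exactly this point as the main obstacle. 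That is not a proof of the step; it is a restatement of the difficulty. In particular, over the lower-dimensional strata of $T$ the fibers of $h$ are $1$-skeleta of cubes of dimension up to $n+1$ (vertices of degree $\geq 3$), many fibers coming from different cells of $P$ collide there, and as the ``bad'' map at the end of Example~\ref{ex:fish2} shows, flat continuity alone can fail, let alone varifold continuity; you give no mechanism that forces the multiplicities to behave at these hubs, nor an argument that the resulting family is nontrivial under the Almgren isomorphism beyond ``this reduces to $\bar f_*[N]=[M]$,'' which is not a reduction once the parameter space has been changed from $T$ to $S^{n-1}$. The component-length bound $2^{2n-1}(n+1)!\,\kappa_n\,\delta$ is likewise asserted rather than derived.

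For comparison, the paper's proof avoids any finite-to-one map $T\to S^{n-1}$. It composes $h$ with the quotient map $\jmath:T\to\Delta^{n-1}$ onto a fundamental domain $\Delta$ for the cube symmetries, so that $\hbar=\jmath\circ h$ is a submersion over $\mathring\Delta$ whose fibers are disjoint simple loops, each of $\bar f$-length at most $4\delta$. Continuity at $\partial\Delta$ is obtained from an orientation-cancellation argument (Claim~\ref{claim:pairs}): the loops colliding over a boundary point come in pairs of opposite orientation, so the limit is the zero cycle, and the bounded local complexity bounds the number $k_{t_0}\leq\kappa'_n=2^n(n+1)!\,\kappa_n$ of colliding sheets, which is what yields the length bound on connected components counted with multiplicity. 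The family is then closed up over $B^{n-1}=\Delta\cup(\partial\Delta\times[0,1])$ by shrinking edges to their midpoints on the collar, so the peripheral cycles are finite point sets, and the Almgren nontriviality is checked directly from the product structure $h^{-1}(\mathring\Delta_i)\simeq\mathring\Delta_i\times S^1$. This is genuinely different from, and substantially more concrete than, the route you sketch; the paper even remarks that your route (pushing cycles apart and inserting small cycles to realize the family as fibers of a map to $S^{n-1}$) is plausible but is not what it does. As written, your proposal leaves the theorem unproved at its crucial point.
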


\begin{proof}
We argue by contradiction as in the proof of Theorem~\ref{theo:FR1} using the same notations.
Suppose that there exists a continuous map $\sigma:P \to U_\nu(M) \subseteq L^\infty(M)$ defined on a compact cubical $(n+1)$-pseudomanifold~$P$ with boundary such that the restriction $\sigma:\partial P \to M$ satisfies 
\[
\sigma_*([\partial P]) = [M] \in H_n(M)
\]
with $\nu < \frac{1}{8 \kappa'_n} \w'_1(M)$, where $\kappa'_n$ is defined in terms of~$\kappa_n$ in~\eqref{eq:kappa'}.

\medskip

From now on, we will consider a cubical structure of~$P$ with bounded local complexity given by Proposition~\ref{prop:complexity}.
As in the proof of Theorem~\ref{theo:FR1}, we construct a map $\bar{f}:Q \cup \partial P \to M$ where $Q$ is a neighborhood of~$P^1$ in~$P$ which agrees with a deformation~$\bar{\sigma}$ of~$\sigma$.
Recall that the map~$\bar{f}$ takes every edge of~$P^1$ to a segment of length less than~$\delta=2 \nu + \varepsilon < \frac{1}{4 \kappa'_n} \w'_1(M)$.
We also construct a map $h:N \to T$ from a closed $n$-pseudomanifold~$N$ homologuous to~$\partial P$ in~$Q \cup \partial P$ to a cubical $(n-1)$-complex~$T=T' \cup_S T''$ formed of $(n-1)$-cubes glued together along their $(n-2)$-faces, where each fiber~$h^{-1}(t)$ is isomorphic to the $1$-skeleton of a cube of dimension at most~$n+1$.
If $M$ is orientable, the filling~$P$ is also orientable (modulo~$\partial P$) and so is the closed pseudomanifold~$N$.

\medskip

Let $C^{n+1}$ be a cubical $(n+1)$-simplex of~$P$.
Denote by $\Isom(C^{n+1})$ the (full) isometry group of~$C^{n+1}$.
Recall that $T' \cap C^{n+1}$ agrees with
\[
Z^{n-1} = \{ t \in [-1,1]^{n+1} \mid \mbox{ there exist } i \neq j \mbox{ such that } t_i = t_j =0 \}.
\]
A fundamental domain for the action of~$\Isom(C^{n+1})$ on~$T' \cap C^{n+1} = Z^{n-1}$ is given by
\begin{equation} \label{eq:Delta}
\Delta = \Delta^{n-1} = \{ t \in [-1,1]^{n+1} \mid 0 \leq t_1 \leq \cdots \leq t_{n-1} \leq 1 \mbox{ and } t_n = t_{n+1} = 0 \}.
\end{equation}
Its orbit gives rise to a natural triangulation of~$Z^{n-1}$ with 
\[
|\Isom(C^{n+1})|/|{\rm Stab}(\Delta^{n-1})| = 2^{n+1} (n+1)! / 2 = 2^{n} (n+1)!
\]
copies of~$\Delta^{n-1}$.
The pieces $Z^{n-2} \times [0,1] \subseteq C^{n+1}$ composing~$T''$ also lie in~$Z^{n-1}$ and the action of $\Isom(C^{n+1}) \cap {\rm Stab}(Z^{n-2} \times [0,1])$ on each of these pieces has $\Delta^{n-1}$ for fundamental domain too.
This gives rise to a triangulation of $Z^{n-2} \times [0,1]$ with
\[
|\Isom(C^{n+1}) \cap {\rm Stab}(Z^{n-2} \times [0,1])| / |{\rm Stab}(\Delta^{n-1})| = 2^{n-1} n!
\]
copies of~$\Delta^{n-1}$, which is compatible with the triangulation of~$Z^{n-1}$.
Denote by $q':Z^{n-1} \to \Delta^{n-1}$ and by $q'':Z^{n-2} \times [0,1] \to \Delta^{n-1}$ the quotient maps.
Since $T=T' \cup T''$ is made of copies of~$Z^{n-1}$ and $Z^{n-2} \times [0,1]$ glued together, there exists a surjective continuous map $\jmath:T \to \Delta^{n-1}$ whose restriction to each copy of~$Z_T$ agrees with the quotient maps~$q'$ or~$q''$, where $Z_T = Z^{n-1}$ or $Z^{n-2} \times [0,1]$.
(It does not matter how the copies of~$Z_T$ are isometrically identified as long as $Z^{n-2} \times \{1\}$ lies in $T' \cap T''$ since, at the end, we take the quotient by the isometry group of~$Z^{n-1}$ or~$Z^{n-2}$.)
Note that the restriction of every fiber of $\jmath:T \to \Delta$ to any copy of~$Z_T$ coincides with an orbit of the isometry group of~$Z^{n-1}$ or~$Z^{n-2}$.
By construction, the complex~$T$ is tiled with copies~$\Delta_i$ of~$\Delta$ such that $\jmath_{|\Delta_i}: \Delta_i \to \Delta$ is a diffeomorphism.
Denote by~$\mathcal{T}_T$ the corresponding triangulation of~$T$.

\forget
Let $\Delta=\Delta^{n-1}$ be the $(n-1)$-simplex obtained as the quotient of the standard $(n-1)$-cube~$C^{n-1}$ by its (full) isometry group $\Isom(C^{n-1})$.
A fundamental domain of the action of~$\Isom(C^{n-1})$ on~$C^{n-1} = [-1,1]^{n-1}$ is given by
\begin{equation} \label{eq:Delta}
\Delta = \{ x \in [-1,1]^{n-1} \mid 0 \leq x_1 \leq \cdots \leq x_{n-1} \leq 1 \}.
\end{equation}
Its orbit gives rise to a natural triangulation~$\mathcal{T}$ of~$C^{n-1}$ with $|\Isom(C^{n-1})| = 2^{n} (n+1)!$ copies of~$\Delta$.
Denote by $q:C^{n-1} \to \Delta^{n-1}$ the quotient map.
There exists a continuous surjective map $\jmath:T \to \Delta$ whose restriction to each cubical $(n-1)$-simplex of~$T$ agrees with the quotient map~$q$.
(It does not matter how the $(n-1)$-cubes of~$T$ are isometrically identified with the standard $(n-1)$-cube~$C^{n-1}$ since, at the end, we take the quotient by the isometry group of~$C^{n-1}$.)
Note that the restriction of every fiber of~$\jmath:T \to \Delta$ to any cubical $(n-1)$-simplex~$C$ of~$T$ coincides with an orbit of the isometry group of~$C$.
By construction, the complex~$T$ is tiled with copies~$\Delta_i$ of~$\Delta$ such that $\jmath_{|\Delta_i}: \Delta_i \to \Delta$ is a diffeomorphism.
Denote by~$\mathcal{T}_T$ the corresponding triangulation of~$T$.
\forgotten{}

\medskip

Since the map $h:N \to T$ is a submersion away from the inverse image of the $(n-2)$-skeleton of~$\mathcal{T}_T$, the composition
\[
\hbar:N \overset{h}{\to} T \overset{\jmath}{\to} \Delta
\]
is a submersion over the interior~$\mathring{\Delta}$ of~$\Delta$.
Moreover, every fiber of~$\hbar$ over~$\mathring{\Delta}$ is composed of exactly $2^{n} (n+1)! \, |P| + 2^{n-1} n! \, |\partial P|$ disjoint simple loops, where 
$|P|$ is the number of cubical $(n+1)$-simplices of~$P$ and $|\partial P|$ is the number of cubical $n$-simplices of~$\partial P$.
Furthermore, the image under $\bar{f}_{|N}:N \to M$ of each of these loops is of length at most~$4\delta$.

\medskip

Suppose that $M$ is orientable.
Fix an orientation on~$N$ and~$\Delta$.
Since $\hbar:N \to \Delta$ is a submersion away from the inverse image of~$\partial \Delta$, we can define in a unique way an orientation on the fibers~$\hbar^{-1}(x)$, with $x \in \mathring{\Delta}$, so that $\hbar^* \omega_\Delta \wedge \omega_{\hbar^{-1}(x)}$ is positive, where $\omega_\Delta$ is a positive volume form on~$\Delta$ and $\omega_{\hbar^{-1}(x)}$ is a volume form on~$\hbar^{-1}(x)$ defining its orientation. 
If $M$ is nonorientable, we only consider unoriented cycles and there is no need to define an orientation on the fibers of~$\hbar$.

\medskip

The family of $1$-cycles $\Xi_x = \hbar^{-1}(x) \subseteq N$ with $x \in \mathring{\Delta}$ extends by continuity to a family of $1$-cycles parameterized by~$\Delta$ as follows.
Specifically, we want to define~$\Xi_{x_0}$ for $x_0 \in \partial \Delta$.
Fix $t_0 \in \jmath^{-1}(x_0)$.
For a small enough neighborhood~$\mathcal{U}$ of~$t_0$ in~$T$, the points of $\jmath^{-1}(x) \cap \mathcal{U}$ converge to~$t_0$ as $x \in \mathring{\Delta}$ goes to~$x_0$.
Moreover, the cardinality~$k_{t_0}$ of $\jmath^{-1}(x) \cap \mathcal{U}$ is bounded by~$2^{n} (n+1)!$ times the number of pieces~$Z_T$ of~$T$ containing~$t_0$.
Since each of these pieces lies in a cube~$C^{n+1}$ corresponding to a cubical $(n+1)$-simplex of~$P$ or a cubical $n$-simplex of~$\partial P$, it follows from the $\kappa_n$-bounded local complexity of~$P$ that
\begin{equation} \label{eq:kappa'}
k_{t_0} = |\jmath^{-1}(x) \cap \mathcal{U}| \leq \kappa_n' := 2^{n} (n+1)! \, \kappa_n.
\end{equation}
Furthermore, we have the following claim.

\begin{claim} \label{claim:pairs}
Suppose that $t_0 \notin Z^{n-2} \times \{0\}$.
For $x \in \mathring{\Delta}$ close enough to~$x_0$, the points of $\jmath^{-1}(x) \cap \, \mathcal{U}$ can be partitioned into pairs $\{t_i,\bar{t}_i\}$ whose inverse images $h^{-1}(t_i)$ and $h^{-1}(\bar{t}_i)$ converge to~$h^{-1}(t_0)$ with opposite orientations as $x$ goes to~$x_0$.
\end{claim}

\begin{proof}
By symmetry, without loss of generality, we can assume that $t_0$ lies in (the boundary of) a copy of $\Delta \subseteq C^{n+1}$; see~\eqref{eq:Delta}.
Let us examine three mutually disjoint cases.

\medskip

{\it Case 1.} Suppose that $(t_0)_{n-1} =1$ and $0 < (t_0)_1 < \cdots < (t_0)_{n-2} <1$.
Thus, $t_0$ lies in an open $(n-2)$-face~$F$ of~$\partial \Delta$.
Since $P$ and $\partial P$ are cubical pseudomanifolds (with and without boundary) and $t_0 \notin Z^{n-2} \times \{0\}$, there are exactly two pieces $Z_T$ and~$\bar{Z}_T$ of~$T$ containing the $(n-2)$-face~$F$ of~$\partial \Delta$ with $\Delta \subseteq Z_T$.
Denote by $\bar{\Delta} \subseteq \bar{Z}_T$ the symmetric of~$\Delta$ with respect to~$F$.
The set $\jmath^{-1}(x) \cap \mathcal{U}$ is formed of exactly two points $t \in \Delta$ and $\bar{t} \in \bar{\Delta}$ symmetric with respect to~$F$.
By symmetry, the loops $h^{-1}(t)$ and~$h^{-1}(\bar{t})$ have opposite orientation at the limit when $x$ goes to~$x_0$.

\medskip

{\it Case 2.} Suppose that $(t_0)_1=0$ (so Case~1 is not satisfied).
Denote by~$\sigma$ the symmetry of~$C^{n+1}$ with respect to the hyperplane $\{t_1=0\}$ of~$C^{n+1}$.
The set $\jmath^{-1}(x) \cap \mathcal{U} \cap C^{n+1}$ decomposes into two subsets $\Sigma_+ \subseteq \{t_1 >0 \}$ and $\Sigma_- \subseteq \{t_1 <0 \}$ symmetric with respect to~$\sigma$.
By symmetry, the loops $h^{-1}(t)$ and~$\sigma(h^{-1}(t))$ have opposite orientation at the limit when $x$ goes to~$x_0$.

\medskip

{\it Case 3.} Suppose that $(t_0)_1>0$ and that Case~1 is not satisfied.
As $t_0 \in \partial \Delta^{n-1}$, 
there exist disjoint subsets $I_1,\dots,I_k \subseteq \{1,\dots,n-1\}$ with $|I_i| \geq 2$ such that $(t_0)_p = (t_0)_q$ for every $p,q \in I_i$.
The isotropy subgroup of~$t_0$ for the action of~$\Isom(C^{n+1})$ is isometric to the product $\Gamma=S(I_1) \times \cdots \times S(I_k)$, where the symmetry groups~$S(I_i)$ act by isometries on~$C^{n+1}$ by permuting the coordinates with index in~$I_i$.
Furthermore, the points of $\Sigma = \jmath^{-1}(x) \cap \mathcal{U} \cap C^{n+1}$ form a free orbit of~$\Gamma$.
Fix a point $s \in \Sigma$ and a bijection $\theta:\Gamma_+ \to \Gamma_-$ between the orientation-preserving and orientation-reversing isometries of~$\Gamma$.
This bijection gives rise to a partition of~$\Sigma$ into pairs of points $\{t_i,\bar{t}_i\}$, where $t_i = \sigma_i(s)$ and $\bar{t}_i = \theta(\sigma_i)(s)$ for some $\sigma_i \in \Gamma_+$.
By symmetry, the loops $h^{-1}(t_i)$ and~$h^{-1}(\bar{t}_i)$ have opposite orientation at the limit when $x$ goes to~$x_0$.
\end{proof}

For every $x \in \Delta$, we have the following decomposition of~$\hbar^{-1}(x)$ into connected components
\begin{equation} \label{eq:hbar(x)}
\hbar^{-1}(x) = \bigcup_{t \, \in \, \jmath^{-1}(x)} h^{-1}(t).
\end{equation}
Observe also that if $t_0 \in Z^{n-2} \times \{0\}$, then the fiber~$h^{-1}(t)$ converges to a point as $t$ goes to~$t_0$.

\medskip

We can now define the $1$-cycle~$\Xi_{x_0} \subseteq N$ so that its restriction to~$h^{-1}(t_0)$ agrees with the limit of the simple loops corresponding to the fibers of~$h$ over the $k_{t_0}$ points of $\jmath^{-1}(x) \cap \mathcal{U}$ as $x \in \mathring{\Delta}$ goes to~$x_0$.
This completely characterizes~$\Xi_{x_0}$ for each of its connected components since its support lies in~$\hbar^{-1}(x_0)$.
By Claim~\ref{claim:pairs} and the observation following~\eqref{eq:hbar(x)}, the $1$-cycle~$\Xi_{x_0}$ is equivalent to the zero $1$-cycle.
See Figure~\ref{fig:sm2} and Figure~\ref{fig:sm1}.

\begin{figure}[!htbp]
\centering
\includegraphics[width=14cm]{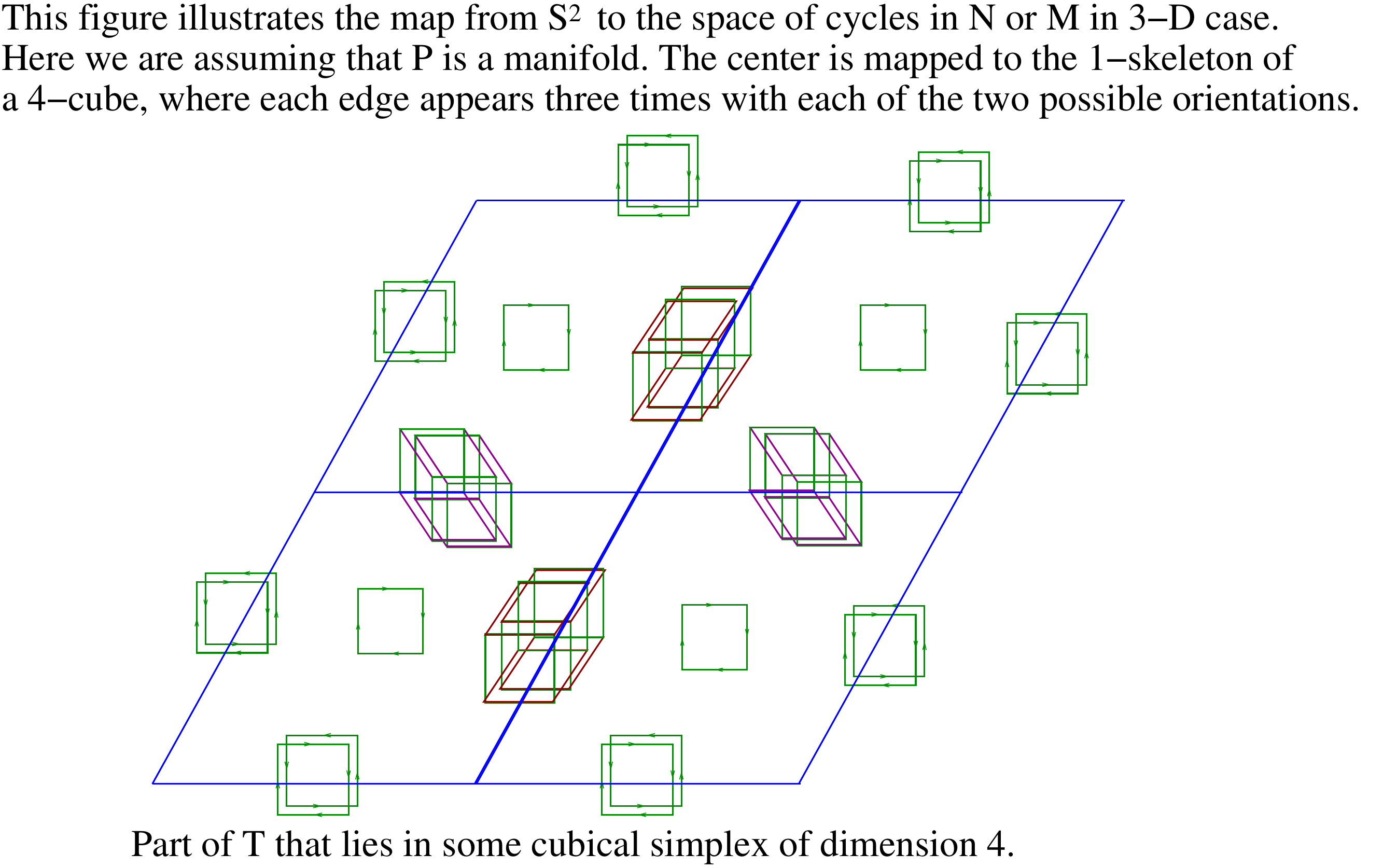}
\caption{The $S^2$-family of $1$-cycles~$\Xi_x$}
\label{fig:sm2}
\end{figure}

\begin{figure}[!htbp]
\centering
\includegraphics[width=14cm]{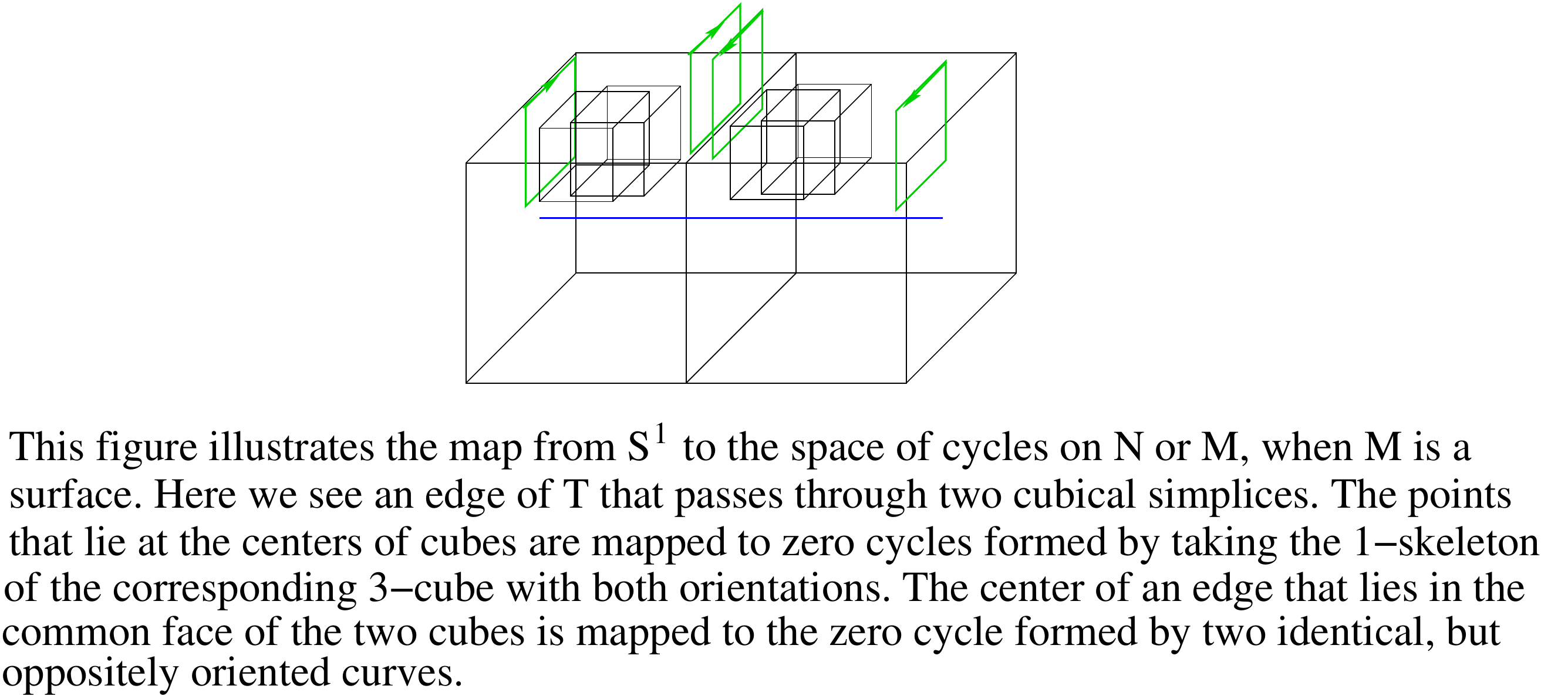}
\caption{The $S^1$-family of $1$-cycles~$\Xi_x$}
\label{fig:sm1}
\end{figure}

\medskip

As noticed before, the length of the image under $\bar{f}_{|N}:N \to M$ of each fiber~$h^{-1}(t)$ with $t \in \jmath^{-1}(x)$ and $x \in \mathring{\Delta}$ is at most~$4\delta$.
Thus, the maximal length of the image of a connected component of~$\Xi_x$ (counted according to its geometric multiplicity) is at most~$4 \delta \kappa'_n$; see Figure~\ref{fig:multiplicity}.
That is, for every $x \in \Delta$, we have
\[
\max_{C \, \subseteq \, \Xi_x} \length(\bar{f}_{|C}) \leq 4 \delta \kappa'_n
\]
where $C$ runs over the connected components of the $1$-cycle~$\Xi_x$.

\begin{figure}[!htbp]
\centering
\includegraphics[width=14cm]{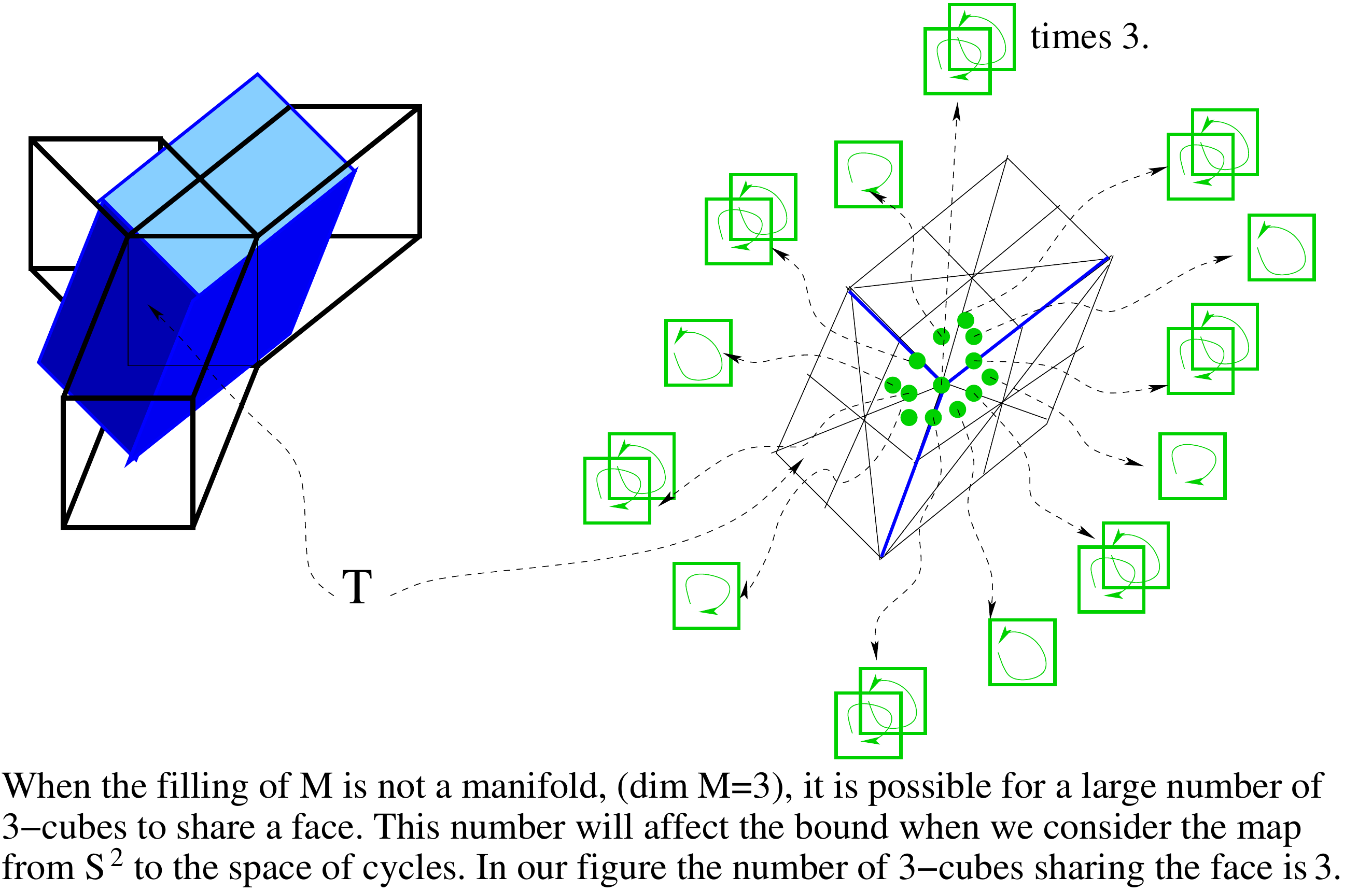}
\caption{Multiple cubes sharing a face}
\label{fig:multiplicity}
\end{figure}

\medskip

Let us extend this family~$\Xi_x$ of $1$-cycles to $\partial \Delta \times [0,1]$.
By Claim~\ref{claim:pairs} (and the observation following~\eqref{eq:hbar(x)}), the $1$-cycle~$\Xi_x$ with $x \in \partial \Delta$ can be seen as a graph where the (algebraic) sum of each edge~$[a,b]$ vanishes.
Denote by~$m$ the midpoint of~$[a,b]$.
For $(x,t) \in \partial \Delta \times [0,1]$, define the $1$-cycle~$\Xi_{x,t}$ by replacing each edge~$[a,b]$ of~$\Xi_x$ with $[a,a_t] \cup [b_t,b]$ (keeping the same multiplicity), where $a_t=ta+(1-t)m$ and $b_t=tb+(1-t)m$ in barycentric coordinates; see Figure~\ref{fig:collar}.
Observe that $\Xi_{x,t}$ is a continuous family of $1$-cycles which agrees with $\Xi_x$ for $t=0$ and with a union of points corresponding to the vertices of~$\Xi_x$ for~$t=1$.
Thus, we obtain a family of $1$-cycles~$\Xi_u$ with $u \in B^{n-1}=\Delta \cup (\partial \Delta \times [0,1])$, where the peripheral cycles~$\Xi_u$ for $u \in \partial B^{n-1}$ are unions of points and 
\begin{equation} \label{eq:Wtilde}
\max_{C \, \subseteq \, \Xi_u} \length(\bar{f}_{|C}) \leq 4 \delta \kappa'_n < \w'_1(M)
\end{equation}
for every $u \in B^{n-1}$.

\begin{figure}[!htbp]
\centering
\includegraphics[width=16cm]{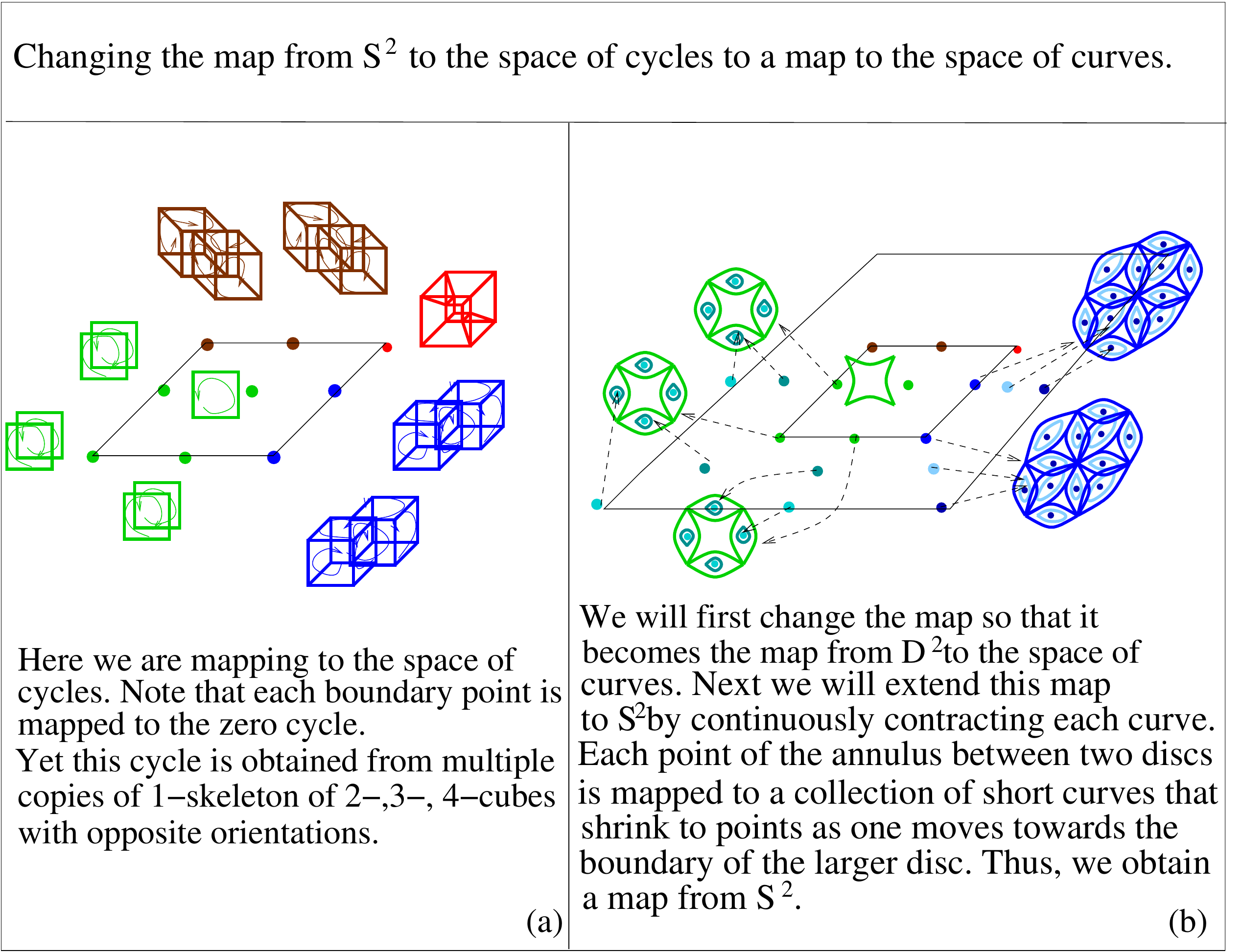}
\caption{Extension of the family of cycles to the collar of~$\Delta$}
\label{fig:collar}
\end{figure}

\medskip

By construction, the inverse image under $h:N \to T$ of the interior~$\mathring{\Delta}_i$ of an $(n-1)$-simplex~$\Delta_i$ of~$\mathcal{T}_T$ identifies with the product $\mathring{\Delta}_i \times S^1$, where each factor $\{x\} \times S^1$ agrees with the restriction of~$\Xi_x$ to~$h^{-1}(\mathring{\Delta}_i)$ for every $x \in \mathring{\Delta}_i$.
This implies that the family of~$1$-cycles~$\Xi_u$ induces a nontrivial class in 
\[
\pi_{n-1}(\mathcal{Z}_1(N;G),\{0\}) \simeq H_n(N;G) \simeq G
\]
under the Almgren isomorphism~\cite{alm}, where $G=\Z$ if $M$ (and so~$N$) is orientable and $G=\Z_2$ otherwise.
By definition of~$\w'_1(M)$, see~\eqref{eq:defWtilde}, we derive from~\eqref{eq:Wtilde} that the image by~$\bar{f}$ of the family of $1$-cycles~$\Xi_u$ does not represent the fundamental class of~$M$. 
Thus,
\[
\bar{f}_*([N]) \neq [M] \in H_n(M;G).
\]

On the other hand, since the pseudomanifolds~$N$ and~$\partial P$ are homologuous in~$Q \cup \partial P$ and the map~$\bar{f}:Q \cup \partial P \to M$ agrees with a deformation of~$\sigma$ on~$\partial P$, we deduce that
\[
\bar{f}_*([N]) = \bar{f}_*([\partial P]) = \sigma_*([\partial P]) = [M].
\]
Hence a contradiction.
\end{proof}

\begin{remark}
In the proof of Theorem~\ref{theo:FR1bis}, we did not define the family~$\Xi_x$ of $1$-cycles of~$N$ with $x \in S^{n-1}$ as the inverse images of some map $N \to S^{n-1}$, but as a perturbation/extension of the family given by the inverse images of $\hbar:N \to \Delta^{n-1}$.
It would be possible to do so by pushing apart the $1$-cycles and inserting small new $1$-cycles along the lines of the examples given in Subsection~\ref{subsec:gmt}.
This would allow us to replace $\w'_1(M)$ in Theorem~\ref{theo:FR1bis} with $\bar{\w}_1(M)$ defined in~\eqref{eq:wp}.
\end{remark}

We can refine the inequality of Theorem~\ref{theo:FR1bis} by considering the invariant~$\w''_1(M)$ instead of~$\w'_1(M)$; see~\eqref{eq:Wtilde2}.

\begin{theorem} \label{theo:FR1ter}
Let $M$ be a closed $n$-manifold with $n \geq 3$.
Then every Riemannian metric on~$M$ satisfies 
\[
\FR(M) \geq c_n \, \w''_1(M)
\]
for some explicit positive constant $c_n$ depending only on~$n$.
\end{theorem}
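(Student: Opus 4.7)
The plan is to adapt the proof of Theorem~\ref{theo:FR1bis}, arguing by contradiction in exactly the same framework, but upgrading the control from connected components of $\Xi_u \subseteq N$ to connected components of the image cycle $\bar{f}(\Xi_u) \subseteq M$. Suppose \mbox{$\FR(M) < c_n \, \w''_1(M)$} for a constant $c_n$ of the form $\frac{1}{C \kappa'_n}$ (to be fixed at the end). As in Theorem~\ref{theo:FR1bis}, I would produce the compact $(n{+}1)$-pseudomanifold~$P$ with $\kappa_{n+1}$-bounded local complexity in $U_\nu(M)$, the map $\bar{f}:Q\cup\partial P\to M$, the closed $n$-pseudomanifold~$N$, and the family $\Xi_u\subseteq N$ of $1$-cycles parameterized by~$u\in S^{n-1}$ arising from the slicing $\hbar:N\to\Delta^{n-1}$ (extended by collapsing the boundary to points). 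The bound from Theorem~\ref{theo:FR1bis} still says that each connected component $C$ of $\Xi_u$ satisfies $\length(\bar{f}_{|C})\le 4\delta\kappa'_n$.

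The new point is to show that, for $n\geq 3$, one can further arrange that for every $u\in S^{n-1}$ the images $\bar{f}(C_1),\dots,\bar{f}(C_k)$ of the distinct connected components of $\Xi_u$ are pairwise disjoint in~$M$, so that connected components of $\bar{f}(\Xi_u)\subseteq M$ coincide with them and inherit the same length bound. The dimension count is the usual one: the space
\[
\mathcal{I}=\bigl\{(x,y,u)\in N\times N\times\Delta^{n-1}\ \bigm|\ x\neq y,\ x,y\in\Xi_u,\ x,y\ \text{in distinct components}\bigr\}
\]
has dimension $(n-1)+1+1=n+1$, while the equation $\bar{f}(x)=\bar{f}(y)$ imposes $n$ transverse conditions for a generic smooth perturbation of $\bar{f}$. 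Hence, for generic $\bar{f}$, this locus in~$\mathcal{I}$ is $1$-dimensional, and its projection $B\subseteq\Delta^{n-1}$ has dimension at most~$1$. Since $n\geq 3$, the set $B$ has positive codimension in the $(n-1)$-dimensional parameter space.

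To eliminate $B$ and control all $u$ simultaneously, I would perform a local surgery on the family of cycles near~$B$: at a generic point of $B$, two components of $\Xi_u$ cross transversely at finitely many points of~$M$, and because curves in $M^n$ have codimension $n-1\geq 2$, one can push one strand off the other along a short arc in~$M$, extending continuously along $B$ and over its higher-codimension strata by induction on the stratification. This yields a modified family $\widetilde{\Xi}_u$ on $N$ whose image in~$M$ is a continuous family of $1$-cycles in both the flat and weak varifold topologies, still represents the same nontrivial class under the Almgren isomorphism, and now satisfies that $\bar{f}(\widetilde{\Xi}_u)\subseteq M$ has pairwise disjoint component images for every $u\in S^{n-1}$. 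Each such component has length at most $4\delta\kappa'_n +o(1)<\w''_1(M)$, contradicting the definition of~$\w''_1(M)$ exactly as at the end of the proof of Theorem~\ref{theo:FR1bis}. The main obstacle is the surgery step: one has to carry out the component-separating modification continuously over the codimension-one bad set and its higher-codimension strata while preserving both the flat/varifold continuity and the Almgren class of the family. The hypothesis $n\geq 3$ enters essentially here, since in dimension $2$ two curves cannot be isotoped off each other without cost, which is precisely why the analogous sharp statement fails for $n=2$.
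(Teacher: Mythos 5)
Your set-up (run the Theorem~\ref{theo:FR1bis} machinery and upgrade the control from components of $\Xi_u\subseteq N$ to components of the image in~$M$) is the right starting point, but the core of your argument has a genuine gap. First, the transversality count is shaky in this setting: the map $\bar f=f\circ r$ factors through the ``retraction'' onto~$P^1$, so distinct loops of~$\Xi_u$ (coming from different $(n{+}1)$-cubes of~$P$ sharing a $2$-face, or from symmetric positions in the same cube) have \emph{identical} images in~$M$ for every~$u$ -- this is exactly the multiplicity phenomenon behind the factor~$\kappa'_n$ in Theorem~\ref{theo:FR1bis} -- and no perturbation of a map of the form $f\circ r$ changes that; one must either perturb $\bar f$ as an arbitrary map on~$N$ (and then justify that the structure and length bounds survive) or modify the cycles themselves. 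Second, and more seriously, even granting that the coincidence locus $B\subseteq\Delta^{n-1}$ has codimension $n-2\geq 1$, positive codimension does not make it removable. Your ``surgery'' step is precisely the missing proof: for $n=3$ the wall~$B$ has codimension one in the parameter space, and the local linking number of the two image loops generically jumps across it, so no continuous modification supported near~$B$ (pushing one strand off the other) can produce disjoint images on a full neighborhood of~$B$; the crossing locus can only be displaced, not erased, by general position. Any fix must exploit the special combinatorial structure of the family, which your argument never uses.

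This is also where your route diverges from the paper's, which never attempts full pairwise disjointness. The paper uses $n\geq 3$ only to perturb $\bar f$ so that $P^1$ embeds in~$M$; it then notes that fibers over~$T''$ map into small pairwise disjoint cubes of~$M$ away from the graph $\bar f(P^1)$, modifies $\bar f$ and the map $\jmath:T\to\Delta$ so that images of fibers over cubes of~$T$ that do not intersect are disjoint, and finally invokes the $\kappa_n$-bounded local complexity of the filling~$P$ to bound, for each parameter, the number of fibers lying over adjacent cubes of~$T$ -- i.e.\ the only ones whose images are allowed to merge. A connected component of $\bar f(\Xi'_u)$ is then a union of boundedly many loop images of length~$\lesssim\delta$, giving $\length(C')\leq\kappa''_n\,\delta<\w''_1(M)$ without ever needing the (generally unattainable) global disjointness your proposal hinges on. To repair your write-up you would essentially have to replace the surgery step by this ``bounded merging'' argument.
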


\begin{proof}
We argue as in the proof of Theorem~\ref{theo:FR1bis}, using the same notations and pointing out only the differences.
Since $n \geq 3$, by slightly perturbing~$\bar{f}$, we can assume that the restriction of~$\bar{f}$ to~$P^1$ is an embedding into~$M$.
By construction, the images by~$\bar{f}$ of the fibers~$h^{-1}(t)$ with $t \in T \setminus T''$ lie in small cubes~$\bar{f}(C^n)$ of~$M$ with $C^n \subseteq \partial P$.
Furthermore, these images are pairwise disjoint in~$M$ and do not intersect the graph $\bar{f}(P^1)$.
Likewise, the fibers $h^{-1}(t)$ with $t \in T \setminus T''$ are isomorphic to the $1$-skeleton of a cube of dimension at most~$n+1$, and their images by~$\bar{f}$ lie in the graph~$\bar{f}(P^1)$.
We can slightly modify~$\bar{f}$ so that the images of the fibers $h^{-1}(t)$ with $t \in T \setminus T''$ are disjoint in~$M$.
Perturbing also the map $\jmath:T \to \Delta$, we can further assume the following.
For every $t \in \jmath^{-1}(x)$, denote by~$C^{n-1}_t$ the $(n-1)$-cube of~$T$ containing~$t$ (or one of them).
For every $t' \in \jmath^{-1}(x)$ not lying in an $(n-1)$-cube of~$T$ intersecting $C^{n-1}_t$, the image of~$h^{-1}(t')$ is disjoint from the image of~$h^{-1}(t)$.
Now, by the $\kappa_n$-bounded local complexity of~$P$, the number of fibers~$h^{-1}(t')$ lying in an $(n-1)$-cube of~$T$ intersecting~$C^{n-1}_t$ is bounded by an explicit constant depending only on~$n$.
As a result, we obtain a family~$\Xi'_u$ of $1$-cycles of~$N$ with $u \in S^{n-1}$ such that every connected component~$C'$ of~$\bar{f}(\Xi'_u)$ satisfies
\[
\length(C') \leq \kappa_n'' \, \delta
\]
for some explicit constant~$\kappa_n''$ depending only on~$n$.
Since the $1$-cycle family~$\Xi'_u$ is a deformation of the original $1$-cycle family~$\Xi_u$, the homotopy class it induces in $\pi_1(\mathcal{Z}_1(N;G),\{0\})$ is nontrivial.
We conclude as in the proof of Theorem~\ref{theo:FR1bis}.
\end{proof}

\section{Filling radius and relative homotopy $1$-waist} \label{sec:FRhomotopy}

We adapt the argument of Section~\ref{sec:FR1} to establish a lower bound on the filling radius of a closed Riemannian manifold in terms of its relative homotopy $1$-waist and derive Theorem~\ref{theo:B}.

\begin{theorem} \label{theo:FR2}
Fix $k \leq n-1$.
Let $M$ be a closed $n$-manifold and $\Phi:M \to K$ be a continuous map to a CW-complex~$K$ with $\pi_i(K) = 0$ for every $i \geq k+1$.
Suppose that $\Phi_*([M]) \neq 0 \in H_n(K;G)$ for some homology coefficient group~$G$.
Then every Riemannian metric on~$M$ satisfies 
\[
\FR(M) \geq c_n \, \w_{1,k}(M,\Phi)
\]
for some explicit positive constant~$c_n$ depending only on~$n$.
\end{theorem}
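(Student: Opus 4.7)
The plan is to adapt the argument of Theorem~\ref{theo:FR1} by restricting the $n$-dimensional pseudomanifold~$N$ produced there to a $(k+1)$-dimensional subcomplex on which Definition~\ref{def:width2} applies, then to exploit the vanishing of $\pi_i(K)$ for $i\geq k+1$ to promote the resulting partial factorization to all of~$N$ and derive a contradiction. Suppose for contradiction that $\FR(M) < c_n\,\w_{1,k}(M,\Phi)$ for a dimensional constant~$c_n$ to be specified. As in the proof of Theorem~\ref{theo:FR1}, fix a compact cubical $(n+1)$-pseudomanifold $P \subseteq U_\nu(M) \subseteq L^\infty(M)$ with $\partial P = M$, subdivide it finely, construct the map $\bar{f}:Q\cup\partial P \to M$ on the neighbourhood of~$P^1$ sending each edge of~$P^1$ to a segment of length less than $\delta = 2\nu+\varepsilon$, and form the closed $n$-pseudomanifold $N\subseteq Q\cup\partial P$ together with the $1$-sweepout $h:N\to T$ onto an $(n-1)$-dimensional complex~$T$ whose fibers are $1$-subpolyhedra of $\bar{f}$-image length at most $(n+1)\,2^n\,\delta$.

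Denote by~$T^{(k)}$ the $k$-skeleton of~$T$ and set $X=h^{-1}(T^{(k)})\subseteq N$, a $(k+1)$-dimensional subcomplex. The restriction $h_X:=h|_X:X\to T^{(k)}$ has the same $1$-subpolyhedron fibers as~$h$, so their $\bar{f}$-images still have length at most $(n+1)\,2^n\,\delta$. With the choice $c_n=\tfrac{1}{(n+1)\,2^{n+1}}$ as in Theorem~\ref{theo:FR1}, this bound is strictly less than $\w_{1,k}(M,\Phi)$. By Definition~\ref{def:width2}, the quadruple $(X,T^{(k)},h_X,\bar{f}|_X)$ therefore cannot be a $\Phi$-homotopy $(1,k)$-sweepout of~$M$, which produces a continuous map $\gamma:T^{(k)}\to K$ and a homotopy $\Phi\circ\bar{f}|_X\simeq \gamma\circ h_X$.

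The crux of the argument is to extend this partial factorization from~$X$ to all of~$N$: the aim is to build $\tilde{\gamma}:T\to K$ extending~$\gamma$ together with a homotopy $\Phi\circ\bar{f}|_N\simeq \tilde{\gamma}\circ h$, proceeding cell by cell over the relative pair $(T,T^{(k)})$. For a $j$-cell~$\tau$ of~$T$ with $j\geq k+2$, the obstruction to extending~$\gamma$ lies in $\pi_{j-1}(K)=0$ and the subsequent obstruction to extending the homotopy on~$h^{-1}(\tau)$ lies in $\pi_j(K)=0$, so these cells are automatic under the hypothesis $\pi_i(K)=0$ for $i\geq k+1$. The delicate case is a $(k+1)$-cell~$\tau$, whose primary obstruction lies in the unconstrained group~$\pi_k(K)$; here one exploits that $\Phi\circ\bar{f}$ is already defined on~$h^{-1}(\tau)$, which combined with the homotopy $\Phi\circ\bar{f}|_X\simeq\gamma\circ h_X$ forces $\gamma\circ h|_{h^{-1}(\partial\tau)}$ to be nullhomotopic, and a Leray-type argument using the fibration-like structure of $h|_{h^{-1}(\partial\tau)}:h^{-1}(\partial\tau)\to\partial\tau\simeq S^k$ with connected $1$-dimensional fibers descends this to a nullhomotopy of $\gamma|_{\partial\tau}$ itself, allowing the extension of~$\gamma$ over~$\tau$; the subsequent homotopy extension on~$h^{-1}(\tau)$ is then a standard obstruction argument using $\pi_{k+1}(K)=0$.

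Once $\tilde{\gamma}$ and the factorization $\Phi\circ\bar{f}|_N\simeq \tilde{\gamma}\circ h$ are in hand, the contradiction mirrors that of Theorem~\ref{theo:FR1}: on the one hand,
\[
(\Phi\circ\bar{f}|_N)_*[N]\;=\;\tilde{\gamma}_*\,h_*[N]\;\in\;H_n(T;G)\;=\;0
\]
since $\dim T=n-1$; on the other hand, $N$ is homologous to $\partial P=M$ in $Q\cup\partial P$ and $\bar{f}|_{\partial P}$ is a deformation of the Kuratowski inclusion, so $(\Phi\circ\bar{f}|_N)_*[N]=\Phi_*[M]\neq 0$. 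The main obstacle is the extension step over the $(k+1)$-cells of~$T$, where the coefficient group~$\pi_k(K)$ is not assumed to vanish and the obstruction-theoretic input must be replaced by geometric information coming from the ambient map $\Phi\circ\bar{f}$.
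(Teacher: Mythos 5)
There is a genuine gap at the step you yourself flag as the crux: the extension of the factorization over the $(k+1)$-cells of~$T$. Two things go wrong there. First, your premise that $\gamma\circ h|_{h^{-1}(\partial\tau)}$ is nullhomotopic is unjustified: the fact that $\Phi\circ\bar{f}$ is defined on $h^{-1}(\bar{\tau})$ only says the map extends over $h^{-1}(\bar{\tau})$, which is (generically) a product-like region $\bar{\tau}\times(\text{circle})$, not a cone over $h^{-1}(\partial\tau)$, so no nullhomotopy follows. Second, and more seriously, the proposed ``Leray-type'' descent is false in general: a surjective map with connected $1$-dimensional fibers need not be surjective on~$\pi_k$, so $\gamma\circ h$ nullhomotopic does not imply $\gamma|_{\partial\tau}$ nullhomotopic. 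A concrete counterexample fitting the hypotheses of the theorem (with $k=2$, so $\pi_i(K)=0$ for $i\geq 3$): take $h$ the Hopf map $S^3\to S^2$ (circle fibers, exactly the generic fiber type of the map $h$ from the proof of Theorem~\ref{theo:FR1}, whose generic fibers are $1$-skeleta of squares) and $\gamma:S^2\to K(\Z,2)$ the fundamental class; then $\gamma\circ h$ is nullhomotopic because $\pi_3(K(\Z,2))=0$, yet $\gamma$ is essential. Since the obstruction over a $(k+1)$-cell lives precisely in the unconstrained group~$\pi_k(K)$, no hypothesis of the theorem kills it, and your route of first building the full $(n-1)$-parameter sweepout $h:N\to T$ and then propagating the factorization across the higher skeleta of~$T$ cannot be completed as stated. (The later claim that extending the homotopy over $h^{-1}(\tau)$ is ``standard obstruction theory using $\pi_{k+1}(K)=0$'' has a similar defect: the preimage of a cell contains cells of low dimension, so obstructions in $\pi_j(K)$ with $j\leq k$ also appear.)

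The paper's proof is structured precisely to avoid this trap. It never forms the $n$-dimensional sweepout of Theorem~\ref{theo:FR1}; instead it works inside the $(k+1)$-skeleton of the filling~$P$: it builds a cubical $k$-complex $R\subseteq P^{k+1}$ fibered by short $1$-complexes over a $(k-1)$-complex~$T$, applies Definition~\ref{def:width2} to conclude that $\Phi\circ\bar{f}|_R$ factors through~$h$ up to homotopy, and uses this factorization (via the identification of the complement of $Q^{k+1}$ in $P_*^{k+1}$ with the mapping cylinder $R\times[0,1]/\!\sim$) to extend $\Phi\circ\bar{f}$ over the whole $(k+1)$-skeleton of~$P$. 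From there the extension proceeds over the cells of~$P$ of dimension $k+2,\dots,n+1$, where the obstructions lie in $\pi_j(K)$ with $j\geq k+1$ and hence vanish, and the contradiction is $\Phi_*([M])=(\Phi\circ\bar{\sigma})_*([\partial P])=0$ in $H_n(K;G)$ because $\partial P$ bounds in~$P$ on which the extended map is defined. In other words, the short-fiber factorization is deployed exactly at the one dimension ($k+1$) where a $\pi_k(K)$-obstruction could arise, and the vanishing hypotheses handle everything above; your proposal instead needs the factorization in dimensions where only $\pi_k(K)$ could help, which is why it breaks down.
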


\begin{proof}
We are going to give a proof by contradiction. Initially, we argue as in the proof of Theorem~\ref{theo:FR1} using the same notations.
Suppose that there exists a continuous map $\sigma:P \to U_\nu(M) \subseteq L^\infty(M)$ defined on a compact cubical $(n+1)$-pseudomanifold such that the restriction $\sigma:\partial P \to M$ satisfies 
\[
\sigma_*([\partial P]) = [M] \in H_n(M;G)
\]
with $\nu < \frac{1}{(k+1) \, 2^{k+1}} \w_{1,k}(M,\Phi)$.
As in the proof of Theorem~\ref{theo:FR1}, we construct a map $f:P^1 \to M$ which agrees with~$\sigma$ on the $1$-skeleton of~$\partial P$ so that the lengths of the images of the edges of~$P^1$ are less than $\delta=2 \nu + \varepsilon < \frac{1}{(k+1) \, 2^{k}} \w_{1,k}(M,\Phi)$.

\medskip

Let~$P_*^{k+1}$ be the cubical $(k+1)$-complex formed of the cubical $(k+1)$-simplices of~$P$ not lying in~$\partial P$.
Denote by~$Q^{k+1} \subseteq P_*^{k+1}$ the cubical $(k+1)$-complex formed of the pieces $X_1^{k+1} \subseteq C^{k+1}$, where $C^{k+1}$ is a cubical $(k+1)$-simplex of~$P$ not lying in~$\partial P$.
We define a continuous map $\bar{f}:Q^{k+1} \cup \partial P \to M$ from $f:P^1 \to M$ which coincides with the deformation~$\bar{\sigma}:\partial P \to M$ of~$\sigma$ and takes every edge of~$P^1$ to a segment of length less than~$\delta$; see the proof of Theorem~\ref{theo:FR1} for the details of the construction.

\medskip

Let us extend this map to the $(k+1)$-skeleton~$P^{k+1}$ of~$P$.
Define the cubical $k$-complex~$R'$ and the cubical $(k-1)$-complex~$T'$ by respectively pasting together the pieces $Y^k \subseteq C^{k+1}$ and the pieces~$Z^{k-1} \subseteq C^{k+1}$, where $C^{k+1}$ is a cubical $(k+1)$-simplex of~$P$ not lying in~$\partial P$.
Denote by $h':R' \to T'$ the map whose restriction to~$Y^k$ agrees with the map $\theta:Y^k \to Z^{k-1}$ defined in~\eqref{eq:theta} with $p=1$.
Similarly, define the cubical $k$-complex~$R''$ and the cubical $(k-1)$-complex~$T''$ by respectively pasting together the pieces $X_2^k \subseteq C^k$ and the pieces~$Z^{k-2} \times [0,\frac{1}{2}]$ with $Z^{k-2} \subseteq C^k$, where $C^k$ is a cubical $k$-simplex of~$\partial P$.
Denote by $h'':R'' \to T''$ the map whose restriction to~$X_2^k$ agrees with the map $\Theta:X_2^k \to Z^{k-2} \times [0,\frac{1}{2}]$ defined in~\eqref{eq:fhat} with $p=1$.
The two maps~$h'$ and~$h''$ so-defined agree on the intersection $R' \cap R''$ formed of the pieces~$Y^{k-1} \subseteq C^k$ lying in~$\partial P$, after identifying $Z^{k-2} \subseteq Z^{k-1} \subseteq T'$ and $Z^{k-2} = Z^{k-2} \times \{ \frac{1}{2} \} \subseteq T''$, where $Z^{k-2} \subseteq C^k$ lies in~$\partial P$.
Put together, these maps give rise to a continuous map
\[
h:R \to T
\]
from the cubical $k$-complex~$R=R' \cup R''$ lying in~$P^{k+1}$ to the cubical $(k-1)$-complex $T=T' \cup_S T''$ obtained by gluing~$T'$ and~$T''$ along the cubical $(k-2)$-complex~$S$ formed of the pieces $Z^{k-2} \subseteq C^k$ lying in~$\partial P$.

\medskip

Consider the composite map 
\[
F=\Phi \circ \bar{f}: Q^{k+1} \cup \partial P \to K
\]
extending $\Phi \circ \bar{\sigma}: \partial P \to K$.
By construction, every fiber~$h^{-1}(t) \subseteq R$ with $t \in T$ agrees with a fiber of~$\theta$ or~$\Theta$, and is isomorphic to the $1$-skeleton of a cube of dimension at most~$k+1$.
Thus, every fiber of~$h$ is sent by~$\bar{f}$ to a graph of length at most $(k+1) \, 2^k \, \delta < \w_{1,k}(M,\Phi)$.
By definition of the $\Phi$-relative homotopy $k$-waist, see Definition~\ref{def:width}, the restriction $F_{|R}: R \to K$ is homotopic to a map 
\[
R \overset{h}{\to} T \to K
\]
which factors out through~$h$.
Thus, the map $F:Q^{k+1} \cup \partial P \to K$ extends to
\[
R \times [0,1] / \! \! \sim
\]
where $(x,1) \sim (y,1)$ if and only if $h(x) = h(y)$.
Since the complement of the interior of~$Q^{k+1}$ in~$P_*^{k+1}$ is homeomorphic to $R \times [0,1] / \! \! \sim$, this yields a map $F:P_*^{k+1} \cup \partial P \to K$ defined in particular on the $(k+1)$-skeleton~$P^{k+1}$ of~$P$.

\medskip

Now, since $\pi_i(K)=0$ for every $i \geq k+1$, the map~$F$ further extends into $F:P \to K$.
Recall that the restriction of~$F$ to~$\partial P$ agrees with~$\Phi \circ \bar{\sigma}$.
Therefore, the homology class 
\[
(\Phi \circ \bar{\sigma})_*([\partial P]) = \Phi_*([M]) \in H_n(K;G)
\]
is trivial.
Hence a contradiction.
\end{proof}

Theorem~\ref{theo:B} follows from Theorem~\ref{theo:FR2} and Theorem~\ref{theo:FR}. 

\forget
\medskip

Let us conclude this section with an example of a homotopy $1$-sweepout which does not proceed from a homology $1$-sweepout, following a construction of I.~Babenko~\cite[\S5.3]{bab06} in systolic geometry.

\begin{example}
Consider the cellular $2$-complex
\[
X = \RP^2 \cup_f B^3
\]
obtained by attaching to~$\RP^2$ a $3$-cell~$B^3$ along a map $f:S^2 \to \RP^2$ representing $2\alpha \in \pi_2(\RP^2)$, where $\alpha$ is a generator of~$\pi_2(\RP^2) \simeq \Z$.
Note that $\pi_1(X) \simeq \pi_1(\RP^2) \simeq \Z_2$ and $H_3(X;\Z) \simeq 0$.
The cellular classifying map of~$X$ decomposes as 
\[
X \overset{\varphi}{\longrightarrow} M \overset{\Phi}{\longrightarrow} K
\]
with $M= \RP^3$ and $K=K(\Z_2,1) =\RP^\infty$, where $\Phi:M \to K$ is the inclusion map.
The composite map $\Phi \circ \varphi:X \to K$ is not homotopic to a map $X \to T \to K$ which factors out through a simplicial $2$-complex~$T$.
Otherwise, the classifying map $\Phi \circ \varphi$ could be deformed into a map to the $2$-skeleton~$\RP^2$ of~$\RP^\infty$ giving rise to an extension of~$f$ to~$B^3$, which is impossible.
Thus, every slicing of~$X$ gives rise to a homotopy $(1,2)$-sweepout of~$M$.
\end{example}
\forgotten

\section{Filling radius, Urysohn width and $1$-waist} \label{sec:UW}

Using the filling estimate established in Theorem~\ref{theo:FR1}, we show that the filling radius of a closed Riemannian manifold is roughly equal to its homology $1$-waist.

\medskip

We need to introduce the following notion related to the $1$-waist.

\begin{definition} \label{def:urysohn}
The \emph{Urysohn width} of a closed Riemannian $n$-manifold~$M$, denoted by~$\UW(M)$, is the infimum of the distances~$\delta$ such that there exists a continuous map $M \to X$ from~$M$ to a simplicial $(n-1)$-complex whose fibers have diameters less than~$\delta$.
Strictly speaking, this definition corresponds to the notion of Urysohn $(n-1)$-width.
\end{definition}

Though the homology $1$-waist~$\w(M)$ and the Urysohn width~$\UW(M)$ are defined in similar terms, the two notions present some differences.
First, the homology $1$-waist measures the $1$-polyhedron length, while the Urysohn width measures the $1$-polyhedron diameter. 
Second, the homology $1$-waist is concerned with homology $1$-sweepouts made of $1$-polyhedra which may intersect each other, while, by definition, the fibers involved in the definition of the Urysohn width are disjoint.
Still, the two notions are connected through the filling radius estimate of Theorem~\ref{theo:FR1} and the general bound
\begin{equation} \label{eq:FR-UW}
\FR(M) \leq \tfrac{1}{2} \, \UW(M)
\end{equation}
obtained in~\cite[Appendix~1]{gro83} satisfied by every closed pseudomanifold.

\medskip

The following result extends the bound~\eqref{eq:FR-UW} to the homology $1$-waist~$\w(M)$. 

\begin{proposition} \label{prop:FRQ}
Every closed Riemannian $n$-manifold~$M$ satisfies 
\[
\FR(M) \leq \tfrac{1}{2} \, \w(M).
\]
\end{proposition}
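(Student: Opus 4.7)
The plan is to adapt Gromov's classical argument \cite[Appendix~1]{gro83} showing $\FR(M)\le \tfrac12\UW(M)$ to the present setting of polyhedral 1-sweepouts. Fix $\epsilon>0$ and choose a polyhedral 1-sweepout $\varphi\colon N\to M$, $h\colon N\to T$ with $L:=\sup_{t\in T}\vol_1(\varphi|_{h^{-1}(t)}) < \w(M)+\epsilon$. By passing to the Stein factorization of $h$---replacing $T$ by the quotient $\tilde T$ of $N$ under the relation $x\sim y$ iff $h(x)=h(y)$ and $x,y$ lie in the same component of $h^{-1}(h(x))$---we may assume every fiber is connected; the natural projection $\tilde T\to T$ is finite-to-one, so $\tilde T$ is still a finite $(n-1)$-polyhedron and the waist does not increase. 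Each image $\varphi(h^{-1}(t))\subset M$ is then a connected 1-chain of total length at most $L$, and hence has intrinsic (and therefore $L^\infty$) diameter at most $L$. Since $L^\infty(M)$ has Jung constant equal to $1$, the midpoint formula
\[
c(t)(m) = \tfrac12\Bigl(\sup_{y\in h^{-1}(t)} d_M(\varphi(y),m) + \inf_{y\in h^{-1}(t)} d_M(\varphi(y),m)\Bigr)
\]
defines a point $c(t)\in L^\infty(M)$ with $d_{L^\infty}(c(t),i(\varphi(y)))\le L/2$ for every $y\in h^{-1}(t)$.

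The next step is to form the mapping cylinder $\hat N := (N\times[0,1])/{\sim}$, where $(x,1)\sim(y,1)$ iff $h(x)=h(y)$. This is an $(n+1)$-dimensional polyhedron whose fundamental chain has boundary $[N\times\{0\}]$, since the slice at $s=1$ collapses onto $T$ of dimension $n-1$ and contributes nothing as an $n$-chain. Using $c$, one defines the straight-line homotopy
\[
\Psi\colon \hat N\to L^\infty(M), \qquad \Psi(x,s)=(1-s)\,i(\varphi(x))+s\,c(h(x)),
\]
which is well defined on the quotient and satisfies $d_{L^\infty}(\Psi(x,s),i(\varphi(x)))\le sL/2\le L/2$, so $\Psi(\hat N)\subset U_{L/2}(i(M))$. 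Pushing $[\hat N]$ forward and using $\deg\varphi=1$ yields $\partial \Psi_*[\hat N]=\Psi_*[N\times\{0\}]=i_*\varphi_*[N]=i_*[M]$, so $i_*[M]=0$ in $H_n(U_{L/2}(i(M)))$ and thus $\FR(M)\le L/2$. Letting $\epsilon\to0$ gives $\FR(M)\le \tfrac12\w(M)$.

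The main technical difficulty will be verifying the continuity of $c\colon T\to L^\infty(M)$: individually $t\mapsto\sup_y d_M(\varphi(y),m)$ is only upper semicontinuous and $t\mapsto\inf_y d_M(\varphi(y),m)$ is only lower semicontinuous, so their half-sum may \emph{a priori} jump at strata of $T$ where the fiber topology changes (e.g.\ when components of $h^{-1}(t)$ split or merge across lower-dimensional faces). I would resolve this either by a mild perturbation/refinement of the sweepout making fibers Hausdorff-continuous in $t$ (which suffices to make $c$ uniformly continuous), or by applying a Michael-type continuous selection theorem to the lower-semicontinuous, nonempty, convex-valued multifunction $t\mapsto \bigcap_{y\in h^{-1}(t)} B\bigl(i(\varphi(y)),L/2+\delta\bigr)\subset L^\infty(M)$, yielding a continuous $c$ at the cost of an arbitrarily small $\delta>0$ in the bound; either way, the extra loss is absorbed by letting $\epsilon,\delta\to0$.
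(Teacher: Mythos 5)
Your plan is a genuinely different route from the paper's. The paper proves Proposition~\ref{prop:FRQ} by a short reduction to known facts: it equips the sweepout's domain $N$ with the metric $\varphi^*g_M+\lambda^2 g_0$, observes that the slicing $h:N\to T$ then bounds the Urysohn width of $N$ by roughly $\w(M)$, and concludes from the chain $\FR(M)\le\FR(N)\le\tfrac12\,\UW(N)$, where the first inequality is monotonicity of the filling radius under $1$-Lipschitz degree-one maps and the second is Gromov's bound \eqref{eq:FR-UW} applied to the closed pseudomanifold $(N,g_N)$. All the delicate analysis (centers, coning, dimension of the parameter space) is thereby outsourced to the already-established inequality \eqref{eq:FR-UW}, whereas you re-run Gromov's filling argument directly in $L^\infty(M)$: Jung centers for the fiber images, the mapping cylinder of $h$, and the straight-line homotopy into the $L/2$-neighborhood. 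Your route is self-contained and makes explicit a point the reduction hides, namely that the length bound in \eqref{eq:Wp} is over the whole (possibly disconnected) fiber, so some connectivity device is needed before length controls diameter; but it is substantially longer and has to pay for this directness.

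Two steps, as written, carry real unproved content. First, the claim that the Stein factorization $\tilde T$ is ``still a finite $(n-1)$-polyhedron'' is not justified: Definition~\ref{def:width} only requires $h$ to be continuous with polyhedral fibers, so a priori $\tilde T$ is merely a compact metrizable space; since the fibers have finitely many components, $\tilde T\to T$ is finite-to-one and the Hurewicz dimension-lowering theorem gives covering dimension at most $n-1$, but for a general compactum of dimension $n-1$ the vanishing of the relevant singular $n$-dimensional class at the top of the cylinder is not automatic. You need either a preliminary PL approximation of $h$ (checking it does not increase fiber lengths) or a nerve argument for fine covers of $\tilde T$, exactly as in Gromov's proof of \eqref{eq:FR-UW}. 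Second, the continuity of the center map $c$ is only sketched; your Michael-selection variant does work --- the multifunction $t\mapsto\bigcap_{y\in h^{-1}(t)}B\bigl(i(\varphi(y)),L/2+\delta\bigr)$ has nonempty closed convex values by hyperconvexity of $L^\infty(M)$, and it is lower semicontinuous because the fibers of $h$ vary upper semicontinuously and the slack $\delta$ lets one slide any point of the intersection toward a radius-$L/2$ center over nearby parameters --- but this verification, or the alternative perturbation argument, must actually be carried out. With these two points filled in, your argument gives the same constant $\tfrac12$ as the paper's proof.
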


\begin{proof}
By definition of the homology $1$-waist, see Definition~\ref{def:width} where $p=1$, there exist a continuous map $h:N \to T$ from a closed $n$-pseudomanifold~$N$ to a finite simplicial $(n-1)$-complex~$T$ and a degree one map $\varphi:N \to M$ 
such that 
\begin{equation} \label{eq:eta}
\length \, \varphi_{|h^{-1}(t)} < \w(M) + 2\varepsilon
\end{equation}
for every $t \in T$, where $\varepsilon >0$ is any given positive real.
Slightly perturbing~$\varphi$ if necessary, we can always assume that $\varphi$ is piecewise smooth.
Consider the metric~$g_N= \varphi^*g_M+ \lambda^2 g_0$ on~$N$, where $\varphi^*g_M$ is the pull-back of the metric~$g_M$ on~$M$ under~$\varphi$, and $g_0$ is a fixed metric on~$N$ with $\lambda>0$ arbitrarily small.
The following chain of inequalities holds
\[
\FR(M) \leq \FR(N) \leq \tfrac{1}{2} \, \UW(N) \leq \tfrac{1}{2} \, \w(M) + \varepsilon
\]
where each inequality can be justified as follows.
By~\cite[p.~6]{gro83}, the filling radius does not increase under $1$-Lipschitz degree one maps.
Applying this result to the contracting map $\varphi:(N,g_N) \to (M,g_M)$ yields the first inequality.
The second inequality is given by~\eqref{eq:FR-UW}.
By construction, the diameter of the fibers~$h^{-1}(t)$ of $h:N \to T$ is less than~$\w(M)+2\varepsilon$, see~\eqref{eq:eta}, which implies the third inequality.
Now, letting $\varepsilon$ go to zero in the previous inequality chain, we obtain the relation $\FR(M) \leq \frac{1}{2} \, \w(M)$.
\end{proof}

Theorem~\ref{theo:C} follows from Theorem~\ref{theo:FR1} and Proposition~\ref{prop:FRQ}.

\section{Hypersphericity, filling radius and Urysohn width} \label{sec:HS}

Using the filling estimate established in Theorem~\ref{theo:FR1}, we derive that the filling radius and the hypersphericity of a closed orientable Riemannian manifold can be arbitrarily far apart.

\begin{definition} \label{def:hypersphericity}
The \emph{hypersphericity} of a closed orientable Riemannian $n$-manifold~$M$, denoted by~$\HS(M)$, is the supremum of the radii~$R$ such that there exists a $1$-Lipschitz map $M \to S^n(R)$ of nonzero degree from~$M$ to the standard $n$-sphere~$S^n(R)$ of radius~$R$.
\end{definition}

The hypersphericity is related to the filling radius and the Urysohn width through the following inequalities.

\begin{proposition} \label{prop:HS-FR-UW}
Let $M$ be a closed orientable $n$-manifold.
Then
\[
\tfrac{1}{2} \arccos(-\tfrac{1}{n+1}) \, \HS(M) \leq \FR(M) \leq \tfrac{1}{2} \, \UW(M).
\]
\end{proposition}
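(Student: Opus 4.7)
The right-hand inequality $\FR(M)\leq \tfrac{1}{2}\UW(M)$ is Gromov's classical bound from \cite[Appendix~1]{gro83}, already recorded as~\eqref{eq:FR-UW} just above, so nothing new is needed there. The plan for the left-hand inequality is to combine Gromov's monotonicity of the filling radius under $1$-Lipschitz nonzero-degree maps with Katz's exact computation of the filling radius of the round sphere.

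The first ingredient is the monotonicity statement: for any $1$-Lipschitz map $f\colon M_1\to M_2$ of nonzero degree between closed orientable Riemannian $n$-manifolds, one has $\FR(M_2)\leq \FR(M_1)$. This is proven using the Kuratowski picture itself: a McShane-type $1$-Lipschitz extension $F\colon L^\infty(M_1)\to L^\infty(M_2)$ of $f$ pushes any $(n+1)$-chain~$P$ filling $M_1$ in $U_\nu(M_1)$ to a chain $F_*(P)$ in $U_\nu(M_2)$ whose boundary represents $\deg(f)\cdot [M_2]\in H_n(U_\nu(M_2);\mathbb{Z})$. Because $U_\nu(M_2)$ has the homotopy type of a finite polyhedron with torsion-free top homology in the relevant range, the vanishing of a nonzero multiple of $[M_2]$ forces the vanishing of $[M_2]$ itself, giving $\FR(M_2)\leq \nu$.

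The second ingredient is Katz's theorem~\cite{katz83}, which gives the sharp value
\[
\FR\bigl(S^n(R),\,g_{\rm can}\bigr)\;=\;\tfrac{R}{2}\arccos\!\bigl(-\tfrac{1}{n+1}\bigr),
\]
with the extremal behavior governed by the regular inscribed spherical $(n+1)$-simplex whose edges have length $R\arccos(-1/(n+1))$.

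With these two facts in hand the bound is immediate. Given any $R<\HS(M)$, the definition of hypersphericity supplies a $1$-Lipschitz map $f\colon M\to S^n(R)$ of nonzero degree, and monotonicity yields $\FR(S^n(R))\leq \FR(M)$; Katz's formula then rewrites the left side as $\tfrac{R}{2}\arccos(-\tfrac{1}{n+1})$. Letting $R\uparrow \HS(M)$ completes the argument. The main place where care is required — and the step I expect to take slightly more effort to write out cleanly — is the monotonicity statement in the \emph{nonzero} (not merely $\pm 1$) degree case, since one has to check that the obstruction in $H_n(U_\nu(M_2);\mathbb{Z})$ is honest-to-goodness torsion-free in the relevant neighborhood so that divisibility by $\deg(f)$ does not spoil the vanishing of~$[M_2]$.
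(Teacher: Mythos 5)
You identify the right two ingredients, and your overall route coincides with the paper's: the second inequality is just \eqref{eq:FR-UW}, and the first is obtained from monotonicity of a filling radius under $1$-Lipschitz maps of nonzero degree combined with Katz's computation for the round sphere, followed by letting $R\uparrow\HS(M)$. However, there is a genuine gap in your monotonicity step. You work with the \emph{integral} filling radius: after pushing a filling of $M_1$ forward into $U_\nu(M_2)$, you only get the vanishing of $\deg(f)\cdot[M_2]$ in $H_n(U_\nu(M_2);\Z)$, and to conclude that $[M_2]$ itself vanishes you assert that $U_\nu(M_2)$ has ``torsion-free top homology in the relevant range.'' This is unjustified: $U_\nu(M_2)$ is merely a metric neighborhood of $M_2$ in $L^\infty(M_2)$, and nothing rules out that the image of $[M_2]$ in $H_n(U_\nu(M_2);\Z)$ is a nonzero torsion class annihilated by multiplication by $\deg(f)$. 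This divisibility issue is exactly why integral filling-radius monotonicity is only available for degree $\pm1$ maps, whereas hypersphericity only provides a map of some nonzero degree.

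The paper resolves precisely this point by passing to the rational filling radius $\FR_\Q$ (same definition with $\Q$ coefficients): one has $\FR_\Q(M)\leq\FR(M)$ via $H_n(X;\Q)\simeq H_n(X;\Z)\otimes\Q$; Gromov's monotonicity \cite[p.~6]{gro83} does hold for $\FR_\Q$ under $1$-Lipschitz maps of nonzero degree, because over $\Q$ the factor $\deg(f)\neq 0$ is invertible; and Katz's argument extends to give $\FR_\Q(S^n)=\tfrac{1}{2}\arccos\left(-\tfrac{1}{n+1}\right)$ — note that once the comparison runs through $\FR_\Q$, you need this rational version of Katz's theorem rather than the integral one. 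With these substitutions your scaling argument goes through verbatim. To keep your integral-coefficient version you would either have to restrict hypersphericity to degree $\pm1$ maps or actually prove the torsion-freeness claim for the neighborhoods $U_\nu(M_2)$, neither of which is available.
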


\begin{proof}
The second inequality comes from~\eqref{eq:FR-UW}.
For the first inequality, it is convenient to work with the rational filling radius, which is defined in a similar way as the standard filling radius, see Definition~\ref{def:FR}, except that the homology coefficients are in~$\Q$.
It follows from abstract nonsense using the relation $H_n(X;\Q) \simeq H_n(X;\Z) \otimes \Q$ for simplicial complexes~$X$ given by the universal coefficient theorem for homology that 
\[
\FR_\Q(M) \leq \FR(M).
\]
By~\cite[p.~6]{gro83}, the rational filling radius does not increase under $1$-Lipschitz maps of nonzero degree.
Thus, 
\[
\FR_\Q(S^n) \, \HS(M) \leq \FR_\Q(M).
\]
Now, the filling radius of the standard sphere~$S^n$ has been computed in~\cite{katz83} and the argument extends to the rational filling radius.
More specifically,
\[
\FR_\Q(S^n) = \tfrac{1}{2} \arccos(-\tfrac{1}{n+1}).
\]
Hence the proposition.
\end{proof}

\begin{remark}
A direct inequality between the hypersphericity and the Urysohn width can be found in~\cite{gro88}, \cite[2.12$\frac{1}{2}_+$]{gro99} and~\cite{guth05}.
\end{remark}

Proposition~8.3 of~\cite{BS10}, combined with the previous proposition or remark, shows that for every Riemannian metric on~$S^2$, these geometric invariants are roughly the same
\[
\HS(S^2) \simeq \FR(S^2) \simeq \UW(S^2).
\]
In higher dimension, examples showing that the hypersphericity and the Urysohn width can be arbitrarily far apart have first been constructed in~\cite[\S5]{guth05}.
Using the relationship between the filling radius and the homology $1$-waist established in Theorem~\ref{theo:FR1}, we show that a similar phenomenon occurs between the hypersphericity and the filling radius.

\begin{theorem}
There exists a sequence~$(g_i)$ of Riemannian metrics on~$S^4$ with arbitrarily small hypersphericity and filling radius bounded away from zero.
\end{theorem}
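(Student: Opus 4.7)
The plan is to take the sequence of metrics on~$S^4$ produced by Guth in~\cite{guth05}---for which $\HS(S^4,g_i)\to 0$ and $\UW(S^4,g_i)\geq c>0$---and to strengthen the Urysohn width lower bound into a filling radius lower bound by way of the left inequality of Theorem~\ref{theo:C}, namely $\FR(M)\geq c_n\,\w(M)$. Thus the work is entirely in showing that Guth's examples satisfy $\w(S^4,g_i)\geq c'>0$ for some positive constant~$c'$; once this is established the theorem follows immediately by combining with Theorem~\ref{theo:C} and with the fact that $\HS(S^4,g_i)\to 0$ by construction.

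First I would revisit Guth's construction in~\cite[\S5]{guth05}. The metrics~$g_i$ are obtained by equipping $S^4$ with a fixed essential lower-dimensional spine and letting the transverse directions become very thin, which is what forces the hypersphericity to collapse. The lower bound on~$\UW$ is proved via a purely topological essentiality argument: the existence of the spine obstructs any continuous map from~$M$ to an $(n-1)$-dimensional polyhedron from having all fibers of small diameter. Next I would argue that the same obstruction forbids polyhedral $1$-sweepouts of~$M$ with short $1$-cycles. Concretely, suppose one had a degree-one map $\varphi:N\to M$ together with $h:N\to T$ defining a polyhedral $1$-sweepout such that every $1$-cycle~$\varphi(h^{-1}(t))$ has length less than some small~$c'$. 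Replacing each fiber $h^{-1}(t)$ by a suitably thickened neighborhood in~$N$ and pushing forward by~$\varphi$, one would produce an $(n-1)$-parameter family of small-diameter subsets of~$M$ whose union covers~$M$ and which realizes $[M]$ as a homological factorization through~$T$. The fact that $\varphi$ has degree one ensures that Guth's homological essentiality obstruction survives the passage through the auxiliary pseudomanifold~$N$, and this yields the desired contradiction, giving $\w(S^4,g_i)\geq c'$ for some positive $c'=c'(c)$. Applying Theorem~\ref{theo:C} then yields
\[
\FR(S^4,g_i) \;\geq\; c_n\,\w(S^4,g_i) \;\geq\; c_n c' \;>\; 0,
\]
as required.

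The main obstacle is the verification that Guth's obstruction, which he phrased for continuous maps from~$M$ itself with small-diameter fibers, really does survive when one allows an auxiliary pseudomanifold~$N$ mapping to both~$M$ (with degree one) and to~$T$. The worry is that the extra freedom of choosing~$N$ might allow one to ``unwind'' the essential spine and circumvent the obstruction. To rule this out, the essentiality argument must be recast in homological terms as a non-factorization statement for~$[M]$ through an $(n-1)$-dimensional polyhedron, and then combined with the standard fact that degree-one maps preserve the fundamental class; the remaining subtlety is to control how length of~$1$-cycles in~$N$ translates, through~$\varphi$, into the diameter-type information that Guth's argument originally consumed.
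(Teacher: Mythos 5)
Your overall skeleton (take Guth's examples, lower-bound the homology $1$-waist, then apply $\FR \geq c_n\,\w$ from Theorem~\ref{theo:C}/Theorem~\ref{theo:FR1}) is the same as the paper's, but the step you yourself identify as the entire content --- a uniform lower bound $\w(S^4,g_i) \geq c' > 0$ --- is not actually carried out, and the route you sketch for it rests on a misreading of Guth's construction. The metrics in~\cite[\S5]{guth05} are not obtained from ``a fixed essential lower-dimensional spine in $S^4$ with thin transverse directions''; they are induced metrics on $4$-spheres $S_i^4 \subseteq \HP^2$ representing $\HP^1$ and Gromov--Hausdorff converging to $\HP^2$, and the width lower bound there is \emph{not} a purely topological essentiality statement about $S^4$. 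Indeed no such intrinsic obstruction can exist: $S^4$ is not essential, so a non-factorization statement for $[M]$ through a $3$-complex, being metric-independent, cannot by itself produce a lower bound that is uniform in $i$ (it holds just as well for a tiny round sphere, whose waist is tiny). Your worry that the auxiliary pseudomanifold $N$ might ``unwind'' the obstruction is therefore aimed at the wrong obstruction, and your proposed fix (``recast the essentiality in homological terms plus degree-one preserves the fundamental class'') does not supply the missing metric input.

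The genuine argument has to go through the ambient $\HP^2$, and this is what the paper does: assuming a polyhedral $1$-sweepout $h_i:N_i\to T_i$, $\varphi_i:N_i\to S_i^4$ with all cycles of length $<\varepsilon$, one pulls back the metric to $N_i$ (as in the proof of Proposition~\ref{prop:FRQ}) so that $\varphi_i$ is $1$-Lipschitz and the fibers of $h_i$ have diameter $<\varepsilon$, i.e.\ $h_i$ has small girth --- this is exactly the ``remaining subtlety'' you leave open. Then Guth's Lemma~5.2 applied to the Lipschitz map $f_i\circ\varphi_i:N_i\to\HP^2$ shows it is homotopic to a map factoring through the $3$-complex $T_i$, forcing $(f_i\circ\varphi_i)_*([N_i])=0$ in $H_4(\HP^2)$; this contradicts the facts that $\varphi_i$ has degree one and $[S_i^4]=[\HP^1]\neq 0$ in $H_4(\HP^2)$. (The same Lemma~5.2, combined with the cup-product relation $\omega\cup\omega\neq 0$ in $H^8(\HP^2)$ versus $\alpha\cup\alpha=0$ on $S^4(R)$, is what makes the hypersphericity tend to zero.) So the missing idea in your proposal is precisely the use of the inclusions $f_i:S_i^4\to\HP^2$ of small girth and the nonvanishing of $[\HP^1]$ in $H_4(\HP^2)$ as the source of the waist lower bound, together with the pullback-metric trick converting short cycles on $N_i$ into a small-girth map.
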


\begin{proof}
We argue as in~\cite[\S5]{guth05}.
Let $\HP^2$ be the quaternionic projective plane (of dimension~$8$) with the standard homogeneous metric.
Consider a sequence $S_i^4 \subseteq \HP^2$ of $4$-spheres with their induced metrics, representing $\HP^1 \simeq S^4$ in homology, which Gromov-Hausdorff converges to~$\HP^2$.
Such an approximating sequence exists; see~\cite{FO} and~\cite[\S5]{guth05}.
The girth of the inclusion maps $f_i:S_i^4 \to \HP^2$ tend to zero.
That is, there exists a finite open cover~$\{ U_k^i \}$ of~$\HP^2$ such that every preimage $f_i^{-1}(U_k^i)$ has an arbitrarily small radius for $i$ large enough.
By~\cite[Lemma~5.2]{guth05}, every $1$-Lipschitz map $S_i^4 \to S^4(R)$ to the round sphere of radius~$R$ is homotopic to a map which factors out through
\begin{equation} \label{eq:split}
S_i^4 \overset{f_i}{\longrightarrow} \HP^2 \to S^4(R)
\end{equation}
for $i$ large enough.
This implies that the map $S_i^4 \to S^4(R)$ has zero degree.
Otherwise, the induced homomorphism $H^4(S^4(R)) \to H^4(S_i^4)$ would be nonzero.
In particular, the homomorphism $H^*(S^4(R)) \to H^*(\HP^2)$ induced by the second map in~\eqref{eq:split} takes the fundamental cohomology class~$\alpha \in H^4(S^4(R))$ to a nonzero class $\omega \in H^4(\HP^2)$.
By naturality of the cup product, this homomorphism takes the product $\alpha \cup \alpha \in H^8(S^4(R))$ to the product $\omega \cup \omega \in H^8(\HP^2)$, which is impossible since $\alpha \cup \alpha = 0$ and $\omega \cup \omega \neq 0$.
Therefore, the hypersphericity of~$S_i^4$ tends to zero.

\medskip

On the other hand, suppose that the filling radius of~$S_i^4$ is not bounded away from zero.
Using the relationship between the filling radius and the homology $1$-waist established in Theorem~\ref{theo:FR1}, there exist a continuous map $h_i:N_i \to T_i$ from a closed orientable $4$-pseudomanifold~$N_i$ to a finite simplicial $3$-complex~$T_i$ and a degree one map $\varphi:N_i \to S_i^4$ 
such that 
\begin{equation} \label{eq:epsilon}
\length \, {\varphi_i}_{|h_i^{-1}(t)} < \varepsilon
\end{equation}
for every $t \in T_i$, where $\varepsilon >0$ is any given positive real.
Define a metric~$g_N$ on~$N$ by pulling-back the metric on~$M$ as in the proof of Proposition~\ref{prop:FRQ} so that the map $\varphi_i:N_i \to S_i^4$ is contracting.
By construction, the length of the fibers $h_i^{-1}(t)$ is less than~$\varepsilon$; see~\eqref{eq:epsilon}.
Thus, the girth of $h_i:N_i \to T_i$ is less than~$\varepsilon$.
By~\cite[Lemma~5.2]{guth05}, the contracting map $f_i \circ \varphi_i:N_i \to \HP^2$ is homothetic to a map which factors out through
\[
N_i \overset{h_i}{\longrightarrow} T_i \to \HP^2.
\]
Since $T_i$ is a simplicial $3$-complex, this implies that the homology class $(f_i \circ \varphi_i)_*([N_i])$ vanishes in~$H_4(\HP^2)$, which contradicts the injectivity of the homomorphisms $(\varphi_i)_*$ and~$(f_i)_*$ in homology.
\end{proof}

\section{Filling radius and homology $p$-waist} \label{sec:pwidth}

We establish a lower bound on the filling radius of a closed Riemannian manifold in terms of its homology $p$-waist and its homological filling functions.
As a consequence, we derive Theorem~\ref{theo:A}.

\medskip

Let us first introduce the following replacement transformation of a map defined on a cubical complex based on the notion of homological filling functions; see Definition~\ref{def:FH}.
For $\varepsilon >0$ small enough, define $\FHb_k^\varepsilon(v) = \FHb_k(v) + \varepsilon$.
In order to keep the notations simple and despite the risk of confusion, we will continue to write $\FHb_k$ for~$\FHb_k^\varepsilon$ in the following proposition.

\begin{proposition} \label{prop:cage}
Let $M$ be a closed Riemannian $n$-manifold and $p$ be a positive
number with $p\leq n$.
For every cubical $i$-complex~$K^i$ with $i \leq p$ and every continuous map $f:K^1 \to M$ defined on the $1$-skeleton of~$K^i$, sending every edge of~$K^1$ to a minimizing segment of~$M$ of length at most~$\delta>0$, there exists a continuous extension $F:\mathcal{X}^i \to M$ of $f:K^1 \to M$ defined on a cubical $i$-complex~$\mathcal{X}^i$ containing~$K^1$ with 
\begin{equation} \label{eq:volp}
\vol_j(F_{|C}) \leq \FHb_{j-1} \circ \cdots \circ \FHb_1(\delta)
\end{equation}
for every cubical $j$-simplex~$C$ of~$\mathcal{X}^i$ with $j \leq p$.
The extension $F:\mathcal{X}^i \to M$ is called the \emph{$\mathcal{R}$-transformation}\footnote{$\mathcal{R}$ stands for ``replacement".} of $f:K^1 \to M$ modeled on~$K^i$ (or simply the $\mathcal{R}$-transformation of~$f$ if the model space~$K^i$ is a cube or is implicit).

Furthermore, the $\mathcal{R}$-transformation can be defined so as to satisfy the following properties:
\begin{enumerate}
\item (Triviality) If 
$f:K^i \to M$ is continuous on~$K^i$ and if the volume bound
\begin{equation} \label{eq:fC}
\vol_j(f_{|C}) \leq \FHb_{j-1} \circ \cdots \circ \FHb_1(\delta)
\end{equation}
holds for each cubical $j$-simplex~$C$ of~$K^i$ with $j \leq i$, then $\mathcal{X}^i=K^i$ and $F=f$. \label{assign3}
\item (Coherence) If ${K}_j^{i_j}$ (with $j=1,2$) are two cubical $i_j$-complexes with ${K}_1^{i_1} \subseteq {K}_2^{i_2}$ and $f_j:{K}_j^1 \to M$ are two continuous maps which coincide on~${K}_1^1$, where $i_j \leq p$, then $\mathcal{X}_1^{i_1} \subseteq \mathcal{X}_2^{i_2}$ and the two corresponding maps $F_j:\mathcal{X}_j^{i_j} \to M$ coincide on~$\mathcal{X}_1^{i_1}$. \label{assign4}
\item (Commutation with the boundary operator $\partial$)
If $K^i$ is a closed $i$-cube and $e_1,\ldots, e_{2i}$
denote its $(i-1)$-faces, then $\mathcal{X}^i=\Sigma^i$ is a compact $i$-pseudomanifold whose boundary~$\partial \Sigma^i$ agrees with the union of the
$2i$ cubical $(i-1)$-complexes/pseudomanifolds $\mathcal{Y}_j$ corresponding to the domains of the $\mathcal{R}$-transformations of the restrictions $f_{|e_j^1}:e_j^1 \to M$ modeled on~$e_j$ for $j=1,\ldots, 2i$. \label{assign5}
\end{enumerate}
\forget
assigns to each pair $(K^i, f)$ 
a pair $(\mathcal{X}^i,F)$ and satisfies the five axioms below,
where $i\in\{1,\ldots, p\}$, $K^i$, $\mathcal{X}^i$ are $i$-dimensional
cubical complexes, $f:K^i\to M$ and $F:\mathcal{X}^i\to M$
are continuous maps:
\par\noindent
(1) (One-dimensional case) If $i=1$, $\mathcal{X}^i=K^i$, the restriction
of $F$ to the $0$-skeleton of $K^1$ coincides with $f$, and each
edge of $K^1$ is mapped to a minimizing geodesic between its
endpoints.
\par\noindent
(2) (Volume bound) Assume that for some $\delta$ the length
of the image of each edge of $K^i$ in $M$ under $f$ does not exceed $\delta$. Then for each $j$-dimensional cubic cell $C$
in $K^i$
\begin{equation} \label{eq:volp}
\vol_i(F(C)) \leq \FHb_{j-1} \circ \cdots \circ \FHb_1(\delta).
\end{equation}
\par\noindent
(3) (Triviality) If 
for each $j$-dimensional cubic cell $C$ in $K^i$
$\vol_i(F(C)) \leq \FHb_{j-1} \circ \cdots \circ \FHb_1(\delta)$,
then $\mathcal{X}^i=K^i$ and $F=f$.
\par\noindent
(4) (Coherence) If ${K}_i^{i_j}$ (with $j=1,2$) are two cubical $p_i$-simplices with ${K}_1^{i_1} \subseteq {K}_2^{i_2}$ and $f_j:{K}_i^{i_j} \to M$ are two continuous maps which coincide on~${K}_1^{i_1}$, then $\Sigma_1^{i_1} \subseteq \Sigma_2^{i_2}$ and the two corresponding maps $F_i:\Sigma_i^{i_i} \to M$ coincide on~$\Sigma_1^{i_1}$. 
\par\noindent
(5) ($A$ commutes with the boundary operator $\partial$)
If $K^i$ is a closed $i$-dimensional cube, and $e_1,\ldots, e_{2i}$
its $(i-1)$-dimensional faces, then $\mathcal{X}^i$ is a pseudomanifold
that can be represented as the union of
$2i$ $(i-1)$-complexes $Y_j$ assigned by $A$ to $(e_j, f|_{e_j})$,
$j=1,\ldots, 2i$. 
Moreover, $Y_{j_1}$ and $Y_{j_2}$ intersect
if and only $e_{i_1}$ and $e_{i_2}$ intersect. In this case
$Y_{j_1}\bigcap Y_{j_2}$ is the $(j-2)$-dimensional complex
that $A$ assigns to $(e_{i_1}\bigcap e_{j_2}, f|_{e_{j_1}\bigcap e_{j_2}})$. 

%
\forgotten
\end{proposition}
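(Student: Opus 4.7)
The plan is to prove the proposition by induction on the skeletal dimension, building $\mathcal{X}^i$ cube-by-cube. The base case $i=1$ is immediate: take $\mathcal{X}^1=K^1$ and $F=f$, with the volume bound $\length(F_{|e})\leq \delta$ (the empty composition evaluated at $\delta$) satisfied by hypothesis; the three axioms hold vacuously. For $i\geq 2$, I proceed by a nested induction on $j=2,\dots,i$, extending at each stage from the $(j-1)$-skeleton to the $j$-skeleton, one $j$-cube at a time.

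At stage $j$, consider a cubical $j$-simplex $C$ of $K^i$ with $(j-1)$-faces $e_1,\dots,e_{2j}$. By the inductive hypothesis, the $\mathcal{R}$-transformation of each restriction $f_{|e_l^1}$ modeled on $e_l$ has already been constructed, producing a cubical $(j-1)$-pseudomanifold $\mathcal{Y}_l$ and a map $F_l\colon\mathcal{Y}_l\to M$ with $\vol_k(F_{l|C'})\leq \FHb_{k-1}\circ\cdots\circ\FHb_1(\delta)$ on each cubical $k$-simplex $C'\subseteq\mathcal{Y}_l$. The coherence axiom, applied inductively to the overlaps along common $(j-2)$-faces, guarantees that these maps paste together into a continuous map from the closed $(j-1)$-pseudomanifold $\partial\Sigma^j_C:=\bigcup_l\mathcal{Y}_l$ into $M$, whose image has total volume at most $2j\cdot\FHb_{j-2}\circ\cdots\circ\FHb_1(\delta)$. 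Using the identity $\FHb_{j-1}(v)=\FH_{j-1}(2jv)$ and the definition of the homological filling function (with the $\varepsilon$-slack built into $\FHb_{j-1}$), this cycle bounds a compact $j$-pseudomanifold $\Sigma^j_C$ mapped into $M$ with
\[
\vol_j(F_{|\Sigma^j_C})\leq\FH_{j-1}\bigl(2j\cdot\FHb_{j-2}\circ\cdots\circ\FHb_1(\delta)\bigr)=\FHb_{j-1}\circ\cdots\circ\FHb_1(\delta).
\]
I set $\mathcal{X}^j_C$ to be this filling, glue over all $j$-cubes of $K^i$ along the already-constructed $(j-1)$-dimensional pieces, and iterate up to $j=i$.

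The triviality property is enforced by preempting the filling step whenever $f$ already extends continuously over a given $j$-cube $C$ with the required volume bound, in which case I take $\mathcal{X}^j_C=C$ and keep $F=f$ there; the slack in $\FHb_{j-1}$ ensures this is always admissible. Coherence is secured by fixing, once and for all, a canonical rule for selecting the filling $\Sigma^j_C$ as a function of the boundary data alone, so that compatible inputs yield compatible outputs along any inclusion $K_1\subseteq K_2$. Commutation with the boundary operator is then manifest: when $K^i$ is a single $i$-cube, $\partial\mathcal{X}^i=\partial\Sigma^i$ coincides by construction with $\bigcup_l\mathcal{Y}_l$. The main obstacle is verifying that the cycle produced at step $j$ genuinely bounds in $M$ so that $\FH_{j-1}$ may be applied: this is automatic whenever the right-hand side of the stated bound is finite, since by the very definition of $\FH_{j-1}$ a single non-bounding cycle of volume $\leq v$ forces $\FH_{j-1}(v)=\infty$; if the right-hand side is infinite, the inequality is vacuous and any continuous extension of $f$ from $K^1$ to a cubical $i$-complex $\mathcal{X}^i\supseteq K^1$ will do.
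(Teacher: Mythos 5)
Your proposal is correct and follows essentially the same route as the paper's proof: an inductive, cube-by-cube construction in which the $\mathcal{R}$-transformations of the $2j$ faces of each $j$-cube are pasted (via coherence and the matching of boundary pieces) into a closed $(j-1)$-pseudomanifold mapped to $M$ of volume at most $2j\,\FHb_{j-2}\circ\cdots\circ\FHb_1(\delta)$, which is then filled using the homological filling function, the factor $2j$ being exactly absorbed by $\FHb_{j-1}(v)=\FH_{j-1}(2jv)$, with triviality handled by preemption and coherence by fixing the fillings once and for all. Your explicit remark on the non-bounding/infinite case is a point the paper's proof leaves implicit, but it does not change the argument.
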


\begin{proof}

We argue by induction on~$p$.
If $p=1$, we simply take $\mathcal{X}^1=K^1$ and \mbox{$F:K^1 \to M$} for $f:K^1 \to M$.
The inequality~\eqref{eq:volp} and the properties~\eqref{assign3}-\eqref{assign5} are satisfied in this case.
Suppose that the result of the proposition holds true for~$p \geq 1$. 
Let $K^{p+1}$ be a cubical $(p+1)$-complex and $f:K^1 \to M$ be a continuous map as in the proposition.
Let us define the $\mathcal{R}$-transformation of the restriction of~$f$ to the $1$-skeleton of each cubical $(p+1)$-simplex~$C^{p+1} \subseteq K^{p+1}$.
Denote by $C_i^p$ the $p$-faces of~$C^{p+1}$ with $1 \leq i \leq 2(p+1)$.
By induction, the $\mathcal{R}$-transformation of the restriction of~$f$ to the $1$-skeleton of~$C_i^p$ is a map $F_i:\mathcal{X}_i^p = \Sigma_i^p \to M$ defined on a compact $p$-pseudomanifold~$\Sigma_i^p$ with
\[
\vol_p(F_i) \leq \FHb_{p-1} \circ \cdots \circ \FHb_1(\delta).
\]
By coherence of the $\mathcal{R}$-transformation, the maps $F_i:\Sigma_i^p \to M$ and $F_j:\Sigma_j^p \to M$ coincide on the cubical $(p-1)$-complex given by the intersection $\Sigma_i^p \cap \Sigma_j^p$.
Thus, the maps $F_i:\Sigma_i^p \to M$ give rise to a continuous map $G:\cup_{i=1}^{2(p+1)} \Sigma_i^p \to M$ with 
\[
\vol_p(G) \leq 2(p+1) \, \FHb_{p-1} \circ \cdots \circ \FHb_1(\delta).
\]
The cubical structures of the $p$-faces~$C_i^p$ of $\partial C^{p+1} = \cup_i C_i^p$ induce compatible natural decompositions of the boundaries~$\partial \Sigma_i^p$ of the pseudomanifolds~$\Sigma_i^p$ into compact $(p-1)$-pseudomanifolds corresponding to the $(p-1)$-faces of~$C_i^p$.
Every compact $(p-1)$-pseudomanifold of these decompositions appears twice with opposite orientations.
Therefore, the sum of the boundaries of the pseudomanifolds~$\Sigma_i^p$ vanishes.
Thus, the space~$\cup_i \Sigma_i^p$ obtained by replacing the $p$-faces of $\partial C^{p+1} = \cup_i C_i^p$ with the compact $p$-pseudomanifolds~$\Sigma_i^p$ is a closed $p$-pseudomanifold~$\Sigma^p$.
By definition of the homological filling function, there exists a continuous extension $F:\Sigma^{p+1} \to M$ of $G:\Sigma^p \to M$ defined on a compact $(p+1)$-pseudomanifold~$\Sigma^{p+1}$ with $\partial \Sigma^{p+1} = \Sigma^p$ such that
\begin{equation} \label{eq:vol(p+1)F}
\vol_{p+1}(F) \leq \FHb_p \circ \cdots \circ \FHb_1(\delta). 
\end{equation}
Gluing together the compact $(p+1)$-pseudomanifolds~$\Sigma^{p+1}$ corresponding to the cubical $(p+1)$-simplices~$C \subseteq K^{p+1}$ along their common faces~$\Sigma_i^p$ following the combinatorial structure of~$K^{p+1}$, we obtain a cubical $(p+1)$-complex~$\mathcal{X}^{p+1}$ and a continuous extension $F:\mathcal{X}^{p+1} \to M$ of $f:K^1 \to M$ modeled on~$K^{p+1}$ satisfying~\eqref{assign5}.

Now, if $f$ is defined on~$K^{p+1}$ and if each cubical $j$-simplex~$C \subseteq K^{p+1}$ satisfies~\eqref{eq:fC}, then $\Sigma_i^p = C_i^p$ and $F_i:C_i^p \to M$ agrees with~$f_{|C_i^p}:C_i^p \to M$ by induction.
In this case, we can take \mbox{$f:K^{p+1} \to M$} for its $\mathcal{R}$-transformation since $f_{|C^{p+1}}:C^{p+1} \to M$ satisfies the volume bound~\eqref{eq:vol(p+1)F}.
Thus, the property~\eqref{assign3} is satisfied.
The property~\eqref{assign4} follows by induction (it holds for the $p$-skeleton of~$K^{p+1}$) and construction.
\end{proof}

\begin{remark}
The $\mathcal{R}$-transformation of the map $f:{K}^1 \to M$ modeled on~$K^p$ only depends on the choice of the filling pseudomanifolds involved in the homological filling functions.
\end{remark}

The homology $p$-waist is related to the filling radius through the homological filling function; see Definition~\ref{def:FH}.

\begin{theorem} \label{theo:FR3}
Let $M$ be a closed Riemannian $n$-manifold.
Then, for every positive integer~$p$,
\[
\w_p(M) \leq \frac{1}{2^{n-p+1}} \textstyle{\binom{n+1} {p}}^{-1} \, \FHb_{p-1} \circ \cdots \circ \FHb_1(2 \, \FR(M)).
\]
\end{theorem}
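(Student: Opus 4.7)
The plan is to argue by contradiction, extending the proof of Theorem~\ref{theo:FR1} by replacing the ``retraction onto the $1$-skeleton followed by~$f$'' step with an application of the $\mathcal{R}$-transformation of Proposition~\ref{prop:cage}, which iteratively lifts one-dimensional volume bounds to higher-dimensional ones through the homological filling functions.

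Suppose the stated inequality fails. Pick $\nu > \FR(M)$ so close to $\FR(M)$ that the violation persists after small perturbations, and fix a compact cubical $(n+1)$-pseudomanifold $P \subseteq U_\nu(M) \subseteq L^\infty(M)$ with $\partial P = M$, subdivided finely enough that every cubical $(n+1)$-simplex has at most one $n$-face in $\partial P$. Exactly as in the proof of Theorem~\ref{theo:FR1}, define $f:P^0 \to M$ by sending each vertex of~$P$ to a nearest point of~$M$, agreeing with the Kuratowski inclusion on~$\partial P$, and extend $f$ to $P^1 \to M$ by minimizing geodesic segments of length less than $\delta = 2\nu + \varepsilon$, where $\delta$ can be made arbitrarily close to~$2\FR(M)$.

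The crucial new step is to invoke Proposition~\ref{prop:cage}, applying the $\mathcal{R}$-transformation of $f$ modeled on each cubical $(p+1)$-simplex (and each cubical $p$-simplex of~$\partial P$) of~$P$. The coherence and commutation-with-$\partial$ axioms guarantee that these local replacements glue across the combinatorial structure of~$P$ to produce a continuous map $\bar F$ defined on a cubical $(p+1)$-dimensional thickening of~$P^1$, together with a deformation $\bar \sigma$ of $\sigma$ on $\partial P$ (obtained by the same $\mathcal{R}$-transformation) with which $\bar F$ agrees on the intersection. By the volume bound of Proposition~\ref{prop:cage}, the image under $\bar F$ of every cubical $j$-face of the domain has $j$-volume at most $\FHb_{j-1} \circ \cdots \circ \FHb_1(\delta)$ for $j \leq p+1$.

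Now, following Section~\ref{sec:simplex} for the general parameter~$p$, assemble the cubical $n$-pseudomanifolds $Y^n \subseteq C^{n+1}$ over the cubical $(n+1)$-simplices of~$P$, together with the pieces $X_2^n \subseteq C^n$ over the cubical $n$-simplices of~$\partial P$, into a closed cubical $n$-pseudomanifold~$N$ inside the domain of~$\bar F$, equipped with a continuous map $h:N \to T$ to an $(n-p)$-dimensional polyhedron for which every fiber $h^{-1}(t)$ is isomorphic to the $p$-skeleton of a cube of dimension at most~$n+1$, and hence carries at most $\binom{n+1}{p} 2^{n+1-p}$ cubical $p$-faces. Setting $\varphi=\bar F|_N$, we obtain
\[
\vol_p\bigl(\varphi_{|h^{-1}(t)}\bigr) \leq \textstyle{\binom{n+1}{p}} \, 2^{n+1-p} \, \FHb_{p-1} \circ \cdots \circ \FHb_1(\delta)
\]
for every $t \in T$, which by the contradiction assumption is strictly less than $\w_p(M)$. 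Since $N$ and $\partial P$ are homologous in the filling and $\bar F$ coincides with $\bar \sigma$ on $\partial P$, one has $\varphi_*([N]) = \bar\sigma_*([\partial P]) = [M]$, so $(\varphi,h)$ is a legitimate polyhedral $p$-sweepout of~$M$, contradicting Definition~\ref{def:width}.

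The main obstacle is the combinatorial bookkeeping required to define the $\mathcal{R}$-transformation coherently across adjacent cubical simplices of~$P$ and along~$\partial P$: the commutation-with-$\partial$ and coherence axioms of Proposition~\ref{prop:cage} are designed precisely for this purpose, but one has to verify that the resulting deformation of $\sigma$ remains homotopic to $\sigma$ on $\partial P$, so that the identification $\varphi_*([N])=[M]$ is preserved. This is the higher-dimensional analogue of the ``deformation $\bar\sigma$ of $\sigma$'' argument in the proof of Theorem~\ref{theo:FR1}, and is where the proof genuinely exploits the axiomatic properties of the $\mathcal{R}$-transformation rather than merely its volume bound.
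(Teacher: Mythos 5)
Your setup (contradiction, the map $f:P^1 \to M$ with edge images of length $\delta = 2\nu+\varepsilon$ as in Theorem~\ref{theo:FR1}, the pseudomanifold $N = N' \cup N_2''$ with $h:N \to T$ whose fibers are $p$-skeleta of cubes, and the count $2^{n-p+1}\binom{n+1}{p}$ of $p$-faces) matches the paper, but there is a genuine gap at the central step: you set $\varphi = \bar F|_N$, treating the $\mathcal{R}$-transformation as if it produced a map $\bar F$ defined on a subset of $P$ containing $N$ (your ``cubical $(p+1)$-dimensional thickening of $P^1$''). It does not. The $\mathcal{R}$-transformation of Proposition~\ref{prop:cage} replaces each $p$-cube $K$ of $P$ by an \emph{abstract} compact $p$-pseudomanifold $\Sigma_K$ carrying a map $\bar F_K:\Sigma_K \to M$; it does not extend $\bar f$ over $P^{(p)}$, let alone over the neighborhood $Q$ of $P^{(p)}$ in which $N'$ sits --- indeed the paper stresses that such an extension may fail to exist when $p>1$, which is precisely the obstruction the proof must circumvent. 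Consequently $N$ itself carries no map to $M$, and $(\bar F|_N, h)$ is not a sweepout. The paper instead uses the product decomposition $N' = \bigcup_{\tau}\bigcup_{K \subseteq C_\tau} \tau \times K$ and replaces each cube $K$ by $\Sigma_K$, obtaining a \emph{new} closed $n$-pseudomanifold $V = V' \cup N_2''$ (here the coherence and boundary-commutation axioms are what make $V$ a pseudomanifold), together with maps $h:V \to T$ and $F:V \to M$ whose fibers satisfy the stated volume bound. This replacement construction is absent from your argument.

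Your concluding degree computation is also not available in this setting. You invoke ``$N$ and $\partial P$ are homologous in the filling,'' which is the Theorem~\ref{theo:FR1} argument and requires the sweepout domain to lie in $Q \cup \partial P$ with the map defined on all of it; since the actual domain $V$ is not a subcomplex of $P$, the paper proves $F_*([V]) = [M]$ differently: the pieces $\tau \times X_\tau$ are mapped into at most $p$-dimensional sets and contribute trivially, while on $N_2''$ the map $F$ agrees with $\bar f = \bar\sigma$, which has degree one relative to each $n$-cube of $\partial P$. For this one needs the triviality property of Proposition~\ref{prop:cage} to guarantee $\Sigma_K = K$ and $\bar F_K = \bar f|_K$ for every $p$-cube $K \subseteq \partial P$, and that is exactly why the paper normalizes $\sigma|_{\partial P}$ to be a PL-homeomorphism and fixes a geodesic cubical structure on $M$ whose simplices are almost minimal fillings of their boundaries. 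These ingredients (and the fact that $\bar\sigma$ is obtained by precomposing with the retraction $\bar\rho$, not by the $\mathcal{R}$-transformation) are missing from your proposal, so the identification of the image of the fundamental class, and hence the contradiction, is not established as written.
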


\begin{proof}
We argue as in the proof of Theorem~\ref{theo:FR1} using the same notations.
Fix a geodesic cubical structure of~$M$ of size at most~$\varepsilon>0$ such that every cubical simplex of~$M$ is an almost minimal filling of its boundary.
Suppose that there exists a continuous map $\sigma:P \to U_\nu(M) \subseteq L^\infty(M)$ defined on a compact cubical $(n+1)$-pseudomanifold such that the restriction $\sigma:\partial P \to M$ is a PL-homeomorphism, and, therefore, satisfies 
\[
\sigma_*([\partial P]) = [M] \in H_n(M)
\]
where $\nu$ is chosen so small that
\begin{equation} \label{eq:nu}
\frac{1}{2^{n-p+1}} \textstyle{\binom{n+1} {p}}^{-1} \, \FHb_{p-1} \circ \cdots \circ \FHb_1(2 \nu) < \w_p(M).
\end{equation}

Construct a map $f:P^1 \to M$ as in the proof of Theorem~\ref{theo:FR1} by projecting the images by~$\sigma$ of the vertices of~$P$ to their closest points in~$M$ and by sending every edge of~$P$ to a segment of~$M$. The length of these segments is at most~$\delta:= 2\nu+\varepsilon$.
Taking a sufficiently fine subdivision of~$P$, we can assume without loss of generality that the inequality~\eqref{eq:nu} is still satisfied with~$\delta$ replacing~$2\nu$.
Note that $f$ agrees with~$\sigma$ on~$P^1 \cap \partial P$.
Now, denote by $Q \subseteq P$ the neighborhood of the $p$-skeleton~$P^{(p)}$ of~$P$ composed of the pieces ${X}_1^{n+1} \subseteq C^{n+1}$ corresponding to the cubical $(n+1)$-simplices~$C^{n+1}$ of~$P$; see Section~\ref{sec:simplex}.
Define a ``retraction" $r:Q \to P^{(p)}$ by putting together the ``retractions" $\rho:X_1^{n+1} \to (C^{n+1})^{(p)}$ described in~\eqref{eq:rho}.
Deform $\sigma:\partial P \to M$ into $\bar{\sigma}:\partial P \to M$ so that the restriction of~$\bar{\sigma}$ to each cubical $n$-simplex of~$\partial P$ agrees with $\sigma \circ \bar{\rho}$, where $\bar{\rho}$ is the extension of the ``retraction"~$\rho$ defined in~\eqref{eq:rho-bar}.
Define
\[
\bar{f}:P^1 \cup \partial P \to M
\]
which agrees with $f \circ r:P^1 \to M$ on~$P^1$ and with $\bar{\sigma}:\partial P \to M$ on~$\partial P$.
Contrary to the proof of Theorem~\ref{theo:FR1}, see~\eqref{eq:f-bar}, it may not be possible to extend~$\bar{f}:P^1 \cup \partial P \to M$ to~$Q \cup \partial P$, or even to $P^{(p)} \cup \partial P$, when $p>1$.
Instead, we will consider the $\mathcal{R}$-transformation of $\bar{f}:P^1 \cup \partial P \to M$ modeled on~$P^{(p)} \cup \partial P$ to carry on the argument.



\medskip

The compact cubical $n$-pseudomanifold~$N' \subseteq Q$ with boundary lying in~$\partial P$ is composed of the cubical $n$-pseudomanifolds ${Y}^n \subseteq C^{n+1}$, where $C^{n+1}$ is a cubical $(n+1)$-simplex of~$P$; see Section~\ref{sec:simplex}. 
Similarly, the cubical $(n-p)$-complex~${T}'$ is composed of the pieces ${Z}^{n-p} \subseteq C^{n+1}$, where $C^{n+1}$ is a cubical $(n+1)$-simplex of~$P$; see Section~\ref{sec:simplex}.
Denote by $\hbar':N' \to {T}'$ the map whose restriction to~${Y}^n$ agrees with the map ${\theta}:{Y}^n \to {Z}^{n-p}$ defined in~\eqref{eq:theta}.
The cubical $n$-complexes ${X}_i^n \subseteq C^n$, where $C^n$ is a cubical $n$-simplex of~$\partial P \simeq M$, form a cubical $n$-pseudomanifold $N_i' \subseteq \partial P$ with the same boundary as~$N_i''$.
Also, the pieces ${Z}^{n-p-1} \times [0,\frac{1}{2}]$ with ${Z}^{n-p-1} \subseteq C^n$, where $C^n$ is a cubical $n$-simplex of~$\partial P$, form a finite cubical $(n-p)$-complex~${T}''$.
Denote by $\hbar'':N_2'' \to {T}''$ the continuous map whose restriction to~${X}_2^n$ agrees with the map ${\Theta}:{X}_2^n \to {Z}^{n-p-1} \times [0,\frac{1}{2}]$ defined in~\eqref{eq:fhat}.
The two maps $\hbar'$ and~$\hbar''$ so-defined agree on the common boundary of~$N'$ and~$N_2''$ after identifying ${Z}^{n-p-1} \subseteq {Z}^{n-p} \subseteq {T}'$ and ${Z}^{n-p-1} \subseteq {Z}^{n-p-1} \times \{ \frac{1}{2} \} \subseteq {T}''$, where ${Z}^{n-p-1} \subseteq C^n$ lies in~$\partial P$.
Put together, these maps give rise to a continuous map
\[
\hbar:N \to {T}
\]
from the closed $n$-pseudomanifold~$N = N' \cup N_2''$ lying in~$Q \cup \partial P$ to the cubical $(n-p)$-complex ${T} = {T}' \cup_{{S}} {T}''$ obtained by gluing~${T}'$ and~${T}''$ along the cubical $(n-p-1)$-complex~${S}$ formed of the pieces ${Z}^{n-p-1} \subseteq C^n$, where $C^n$ is a cubical $n$-simplex of~$\partial P$.

\medskip

By construction, every fiber $\hbar^{-1}(t) \subseteq N'$ with $t \in T'$ agrees with a fiber of~${\theta}$, and therefore is isomorphic to the $p$-skeleton of a cube of dimension at most~$n+1$.
Moreover, the retraction $r:Q \to P^{(p)}$ sends every fiber~$\hbar^{-1}(t)$ with $t$ lying in the interior~$\mathring{\tau}$ of a cubical simplex~$\tau$ of~${T}'$ to the same cubical $p$-complex~$C_\tau \subseteq C^{n+1}$, preserving the cubical structure.
(Note that $C_\tau$ is only composed of $p$-cubes~${K}$ glued together.)
In particular, the preimage~$\hbar^{-1}(\mathring{\tau}) \subseteq N'$ of the interior of a cubical simplex~$\tau$ of~${T}'$ decomposes as $\hbar^{-1}(\mathring{\tau}) \simeq \mathring{\tau} \times C_\tau$ and the map~$\hbar:N' \to T'$ takes $(t,x) \in \mathring{\tau} \times C_\tau \subseteq N'$ to $\hbar(t,x)=t \in {T}'$.
Therefore, the compact $n$-pseudomanifold~$N'$ with boundary lying in~$\partial P \simeq M$ decomposes as the union
\begin{equation} \label{eq:N}
N' = \bigcup_{\tau \, \subseteq \, T'} \bigcup_{{K} \, \subseteq \, C_\tau} \tau \times {K} 
\end{equation}
over the cubical simplices~$\tau$ of~$T'$ and the $p$-cubes~${K}$ of~$C_\tau$, where $\tau_1 \times {K}_1$ is attached to $\tau_2 \times {K}_2$ along $(\tau_1 \cap \tau_2) \times ({K}_1 \cap {K}_2)$.
Note that if $\tau_1 \subseteq \tau_2$ then $C_{\tau_2} \subseteq C_{\tau_1}$.

\medskip

Let ${K}$ be a $p$-cube of~$P$.
By Proposition~\ref{prop:cage}, the $\mathcal{R}$-transformation $\bar{F}_{K}:\Sigma_{K} \to M$ of $\bar{f}_{|{K}^1}:{K}^1 \to M$ modeled on~$K$ is defined on a compact $p$-pseudomanifold~$\Sigma_{K}$ with boundary and satisfies
\begin{equation} \label{eq:volF1}
\vol_p(\bar{F}_{K}) \leq \FHb_{p-1} \circ \cdots \circ \FHb_1(\delta).
\end{equation}
Since the image under $\bar{f}=\bar{\sigma}$ of each cubical simplex of~$\partial P$ is an almost minimal filling of the image of its boundary, it follows from the property~\eqref{assign3} of the $\mathcal{R}$-transformation that for every $p$-cube $K \subseteq \partial P$, the pseudomanifold~$\Sigma_K$ is equal to~$K$ and the map $\bar{F}_{K}:\Sigma_K = K \to M$ agrees with $\bar{f}:K \to M$.
Replacing every $p$-cube~${K}$ with~$\Sigma_{K}$ in~\eqref{eq:N}, we obtain a simplicial $n$-complex
\[
V' = \bigcup_{\tau \, \subseteq \, T'} \bigcup_{{K} \, \subseteq \, C_\tau} \tau \times \Sigma_{K}
\]
where $\tau_1 \times \Sigma_{{K}_1}$ is attached to $\tau_2 \times \Sigma_{{K}_2}$ along $(\tau_1 \cap \tau_2) \times ( \Sigma_{{K}_1} \cap \Sigma_{{K}_2})$ whenever $\tau_1 \times {K}_1$ is attached to $\tau_2 \times {K}_2$ in~\eqref{eq:N}.
By the coherence property~\eqref{assign4} of Proposition~\ref{prop:cage}, the pseudomanifold structure of~$N'$ carries over to~$V'$.
More precisely, $V'$ is a compact $n$-pseudomanifold with the same boundary as~$N'$.
Thus, the union $V=V' \cup N_2''$ is a closed $n$-pseudomanifold.

\medskip

For every cubical simplex~$\tau$ of~$T'$, define
\begin{equation} \label{eq:X_tau}
{X}_\tau = \bigcup_{{K} \, \subseteq \, C_\tau} \Sigma_{{K}}
\end{equation}
as the union of the compact $p$-pseudomanifolds~$\Sigma_{{K}}$ corresponding to the $p$-cubes~${K}$ of~$C_\tau$, where $\Sigma_{{K}_1}$ is attached to~$\Sigma_{{K}_2}$ along~$\Sigma_{{K}_1 \cap {K}_2}$; see Proposition~\ref{prop:cage}.
The map 
\[
h:V \to T
\]
which agrees with $\hbar'':N_2'' \to T''$ on~$N_2''$ and sends $(t,x) \in \mathring{\tau} \times {X}_\tau$ to $t \in T$ for every cubical simplex~$\tau$ of~$T'$ is well defined and continuous.
By the coherence property~\eqref{assign4} of Proposition~\ref{prop:cage}, the previously defined maps $\bar{F}_{K}:\Sigma_{K} \to M$ put together induce a map $\bar{F}_\tau:{X}_\tau \to M$.
Now, consider the map 
\[
F:V \to M
\]
which coincides with $\bar{f}: \partial P \to M$ on~$N''_2 \subseteq \partial P$ and whose restriction to each fiber $h^{-1}(t) \simeq \{ t \} \times {X}_\tau$ with $t \in \mathring{\tau}$ agrees with $\bar{F}_\tau:{X}_\tau \simeq h^{-1}(t) \to M$.
By the coherence property and since $\bar{F}_K = \bar{f}_{K}$ for every $p$-cube $K \subseteq \partial P$, the map $F:V \to M$ is well defined and continuous.

\medskip

For every cubical simplex~$\tau$ of~$T'$, recall that the cubical $p$-complex~$C_\tau$ lies in~$C^{n+1}$.
Since $C^{n+1}$ has at most $k=2^{n-p+1} \binom{n+1} {p}$ faces of dimension~$p$, the cubical complex~${X}_\tau \simeq h^{-1}(t)$ for $t \in \mathring{\tau}$ is composed of at most $k$ pseudomanifolds~$\Sigma_{K}$; see~\eqref{eq:X_tau}.
Since the volume of the restriction of~$F$ to each of these pseudomanifolds satisfies~\eqref{eq:volF1}, we obtain
\begin{equation*} 
\vol_p [F(h^{-1}(t))] \leq 2^{n-p+1} \textstyle{\binom{n+1} {p}} \, \FHb_{p-1} \circ \cdots \circ \FHb_1(\delta) < W_p(M)
\end{equation*}
for every $t \in \mathring{\tau}$, where the second inequality follows from the filling radius assumption; see~\eqref{eq:nu}.
Thus, by definition of the homology $p$-waist, see Definition~\ref{def:width}, the map $F:V \to M$ satisfies
\begin{equation} \label{eq:Vneq}
F_*([V]) \neq [M] \in H_n(M).
\end{equation}

\medskip

In a different direction, the retraction~$r:Q \to P^{(p)}$ takes every piece $\tau \times C_\tau$ of~$N$, where $\tau$ is a cubical simplex of~$T'$, see~\eqref{eq:N}, to the cubical $p$-complex~$C_\tau$.
Thus, the map $\bar{f}:Q \to M$ defined as $\bar{f} = f \circ r$ takes $\tau \times C_\tau$ to the image of~$C_\tau$ in~$M$ by~$f$.
Similarly, the map $F:V \to M$ takes $\tau \times {X}_\tau$ to the image of~${X}_\tau$ in~$M$ by~$\bar{F}_\tau$, where $\tau$ is a cubical simplex of~$T'$.
Therefore, the contribution of the pieces $\tau \times {X}_\tau$ to the image by~$F_*$ of the fundamental class of~$V$ is trivial.
Now, by construction, the map $\bar{f}: \partial P \to M$ induces a degree one map in relative homology between every cubical $n$-complex~$X_2^n$ of~$N_2''$ and the cubical $n$-simplex~$C^n$ of~$\partial P$ containing~$X_2^n$.
Since the map~$F$ agrees with~$\bar{f}$ on~$N_2''$, we deduce that
\[
F_*([V]) = [M] \in H_n(M).
\]
Hence a contradiction with~\eqref{eq:Vneq}.
\end{proof}

\begin{remark} \label{rem:improve}
Working this simplicial complexes instead of cubical complexes yields a better quantitative estimate in Theorem~\ref{theo:FR3}.
Namely,
\[
\w_p(M) \leq 
\textstyle{\binom{n+1} {p}}^{-1} \, \FH_{p-1}(p \, \FH_{p-2}(\cdots (3 \, \FH_2(2 \, \FH_1(6\, \FR(M)))) \cdots )).
\]
\end{remark}

Since the homological filling functions are nondecreasing, Theorem~\ref{theo:E} follows from Theorem~\ref{theo:FR3} and Theorem~\ref{theo:FR}.

\end{document}